\documentclass{article}

\usepackage{graphicx}
\graphicspath{ {images/} }

\usepackage{amsmath}
\usepackage{amsfonts}
\usepackage{amsthm}
\usepackage{amssymb}
\usepackage{color}
\usepackage{tikz}

\newtheorem{theorem}{Theorem}[section]
\newtheorem{lemma}[theorem]{Lemma}

\newtheorem{proposition}[theorem]{Proposition}

\newtheorem{problem}[theorem]{Problem}

\newtheorem{corollary}[theorem]{Corollary}

\theoremstyle{definition}
\newtheorem{definition}[theorem]{Definition}
\theoremstyle{remark}
\newtheorem{remark}[theorem]{Remark}

\numberwithin{equation}{section}
\numberwithin{figure}{section}

\oddsidemargin 0.4truecm   % -0.7truecm
\evensidemargin 0pt \marginparwidth 40pt \marginparsep 10pt

% vertical spacing:
\topmargin -1.7truecm \headsep 40pt \textheight 21.5truecm
\textwidth 15truecm

%%%%%%%%%%%%%%%%%%%%    Mikhail Ostrovskii's stuff

\newcommand\remove[1]{}

\def\N{{\cal{N}}}
\def\M{{\cal{M}}}

\def\f2{\mathbb{F}_2}

\def\dist{\hskip0.02cm{\rm dist}\hskip0.01cm}

\newcommand{\ep}{\varepsilon}

\newcommand{\diam}{{\rm diam}\hskip0.02cm}

%%%%%%%%%%%%%%%GREEK%%%%%%%%%%%%%%%%%%%%%%%%%%%%%%%%
\newcommand{\bbN}{\mathbb{N}}
\newcommand{\al}{\alpha}
\newcommand{\be}{\beta}
\newcommand{\g}{\gamma}
\newcommand{\de}{\delta}
\newcommand{\e}{\varepsilon}

\newcommand{\la}{\lambda}
\newcommand{\s}{\sigma}

\newcommand{\bbR}{\mathbb{R}}

%\newcommand{\N}{\mathbb{N}}

%%%%%%%%%%%%%%% operatornames %%%%%%%%%%%%%%%%%%%%%%%%%%%%%
\newcommand{\Span}{\operatorname{span}}

\newcommand{\comp}{\operatorname{comp}}
%\renewcommand{\Im}{\operatorname{Im}}
%\newcommand{\lip}{\operatorname{lip}}

%\renewcommand{\Re}{\operatorname{Re}}

%%%%%%%%%%%%%%%%%%ABBRS%%%%%%%%%%%%%%%%%%%%%%%%%%%%%

\newcommand{\disp}{\displaystyle}
\newcommand{\lb}{\label}

\newcommand{\lra}{\longrightarrow}

\newcommand{\Buo}{Without loss of generality }

\newcommand{\DEF}{\buildrel {\mbox{\tiny def}}\over =}

\begin{document}

\title{\LARGE Metric spaces admitting
low-distortion embeddings into all $n$-dimensional Banach spaces}

\author{Mikhail I. Ostrovskii and Beata Randrianantoanina}

%\date{\today}
\maketitle

\begin{large}

\begin{abstract}
For a fixed $K\gg 1$ and
$n\in\mathbb{N}$, $n\gg 1$, we study  metric
spaces which admit embeddings with distortion $\le K$ into each
$n$-dimensional Banach space. Classical examples include spaces embeddable
into $\log n$-dimensional Euclidean spaces, and equilateral spaces.

We prove that good embeddability properties are preserved under
the operation of metric composition of metric spaces. In
particular, we prove that $n$-point ultrametrics can be
embedded with uniformly bounded distortions into arbitrary Banach
spaces of dimension $\log n$.

The main result of the paper is a new example of a family of
finite metric spaces which are not metric compositions of
classical examples and which do embed with uniformly bounded
distortion into any Banach space of dimension $n$. This partially
answers a question of G.~Schechtman.
\end{abstract}

{\small \noindent{\bf 2010 Mathematics Subject Classification.}
Primary: 46B85; Secondary: 05C12, 30L05, 46B15, 52A21.}

\tableofcontents

\section{Introduction}

This paper is devoted to the following problem suggested by
Gideon~Schechtman during the Workshop in Analysis and Probability at Texas A\&M University, College Station, Texas, July 2013:

\begin{problem}\label{P:Gideon} Fix a constant $K\gg 1$ and
$n\in\mathbb{N}$ satisfying $n\gg 1$. Characterize all metric
spaces admitting embeddings with distortion $\le K$ into each
$n$-dimensional Banach space.
\end{problem}

The {\it distortion} of an (injective) embedding $f : X \to Y$ of
a metric space $(X, d_X)$ into a metric space $(Y, d_Y)$ is
defined by
\[ \dist(f) = \sup_{\substack{x,y\in X\\ x\ne y}} \frac{d_Y (f(x), f(y))}{d_X(x, y)}\cdot
 \sup_{\substack{x,y\in X\\ x\ne y}} \frac{d_X(x, y)}{d_Y (f(x), f(y))}.\]

If $\dist(f)\le K$, we say that $f$ is a {\it $K$-embedding}, and that  a metric space $(X,d_X)$ is   {\it $K$-embeddable} into a metric space $(Y,d_Y)$.

Problem \ref{P:Gideon} can be viewed as a part of modern Ramsey
Theory which seeks to characterize types of structures which can
be found inside arbitrary structures that are sufficiently large.
In the category of metric spaces in relations with their
embeddability into Banach spaces this work was initiated in
\cite{BFM86}. See \cite{BLMN05}, \cite{MN13}, and references
therein for important Ramsey-type results in the category of
metric spaces.\medskip

Since Problem \ref{P:Gideon} is vague and somewhat unrealistic,
Schechtman also suggested a more specific
Problem~\ref{P:Schechtman} (see below). Before stating it we need
to  list known examples of metric spaces embeddable into each
$n$-dimensional Banach space:

\begin{itemize}

\item[{\bf (A)}] Metric spaces admitting low-distortion embeddings
into $\log n$-dimensional Euclidean spaces. This class of examples
is obtained as a corollary of the fundamental Dvoretzky theorem
(see Theorem~\ref{T:DvoM}) which implies that for every $M>1$
there exists $s(M)>0$ such that for each $n\in\mathbb{N}$ the
space $\ell_2^k$ with $k\le s(M)\ln n$ can be linearly embedded
into any $n$-dimensional Banach space with distortion $\le M$. It
follows that any metric space which embeds with distortion $\le
K/M$ into such $\ell_2^k$ can be embedded with distortion $\le K$
into any $n$-dimensional Banach space. See Section
\ref{S:Euclidean} for more details.

\item[{\bf (B)}] A metric space is called {\it equilateral} if the
distances between all pairs of distinct points in it are equal to
the same positive number. An equilateral space is also called an
{\it equilateral set}.  Equilateral spaces of size $\le a^n$,
where $a$ depends on $K$, form another class of metric spaces
satisfying the conditions of Problem \ref{P:Gideon}. In
Section~\ref{S:Equilateral} we describe the known estimates for
the distortion of embeddings of equilateral spaces and give a
simple direct proof of the estimates that we will use for our
results.

\item[{\bf (C)}] Metric spaces from the classes mentioned in {\bf
(A)} and {\bf (B)} can be combined using a general construction,
called {\it metric composition} (see Definition~\ref{D:MetrComp}),
which was introduced in \cite{BLMN05}. In
Section~\ref{S:Composition} we present a detailed study of
embeddability properties of metric compositions of metric spaces.
We prove that for a suitable choice of parameters, the metric
composition of metric spaces which embed well into a given Banach
space $E$, also embeds well into $E$ (Theorem~\ref{T:Mbeta},
Corollary~\ref{comp}). Applying this general construction to
examples described in {\bf (A)} and {\bf (B)} we get more examples
of metric spaces satisfying the conditions of Problem
\ref{P:Gideon}. In particular ultrametrics  (see
Section~\ref{S:HST&UM}) can be obtained as metric compositions of
equilateral sets described in {\bf (B)}.  Thus, as a corollary of
our results, we obtain that ultrametrics  of exponential size
embed with uniformly bounded distortion into any $n$-dimensional
Banach space. We also provide a direct proof of this fact (see
Proposition~\ref{P:kHST} and Corollary~\ref{C:UM}).

\end{itemize}

\begin{problem}[Schechtman]\label{P:Schechtman} Can one suggest examples satisfying the condition of Problem \ref{P:Gideon} which are completely
different from the ones mentioned in {\bf (A)--(C)}?
\end{problem}

The main goal of this paper is to give an answer to this problem, that
is, to present an example of a family of metric spaces which satisfy the
condition of Problem \ref{P:Gideon}, but do not belong to any of
the classes {\bf (A)}-{\bf (C)}.%\medskip

In Section~\ref{S:SnowDiam} we show an example of a family of
graphs, which we call {\it weighted diamonds $W_n$}, which we
prove do not arise from any of the discussed above examples, and
which embed with uniformly bounded distortion into any Banach space of
dimension at least $\exp(c(\log \log |W_n|)^2)$ for a suitably chosen
$c>0$. The family $\{W_n\}$ was first constructed in \cite{Ost14},
as an example of a family of topologically complicated
series-parallel graphs which  embed with  uniformly bounded distortion
into $\ell_2$ (of infinite dimension). In the present paper we
show that $W_n$'s are snowflaked versions of standard diamonds
$D_n$'s. However $D_n$'s are not uniformly doubling so the
embeddability results for $W_n$ proved in \cite{Ost14} and in the
present paper do not follow from Assouad's theorem (see Remark
\ref{R:Ass83&Ost14}).   Our method of proof of the embeddability
of $W_n$'s uses a mixture of $\de$-net arguments and some
``linear'' manipulations.

In Section~\ref{S:coral} we show a more general construction of
`hierarchically built weighted graphs' which can have
topologically very complicated structure and which  embed with
uniformly bounded distortion into any Banach space of specified
dimension (see Theorem~\ref{T:GenTwoWay}).

\section{Low-dimensional Euclidean subsets, equilateral spaces,
and their combinations}\label{S:Equi&Eucl&Mix}

\subsection{Low-dimensional Euclidean subsets}\label{S:Euclidean}

We remind the following improvement of the Dvoretzky theorem
\cite{Dvo61}, which is due to Milman \cite{Mil71} (we state it in
a somewhat unconventional way).

\begin{theorem}\label{T:DvoM} For each $M\in(1,\infty)$ exists
$s(M)\in(0,\infty)$ such that any $n$-dimen\-sio\-nal Banach space
$X$ contains a $k$-dimensional subspace $X_k$ with $k\ge s(M)\ln
n$, such that $X_k$ is linearly isomorphic with $\ell_2^k$ with distortion $\le M$.
\end{theorem}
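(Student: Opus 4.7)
The plan is to carry out Milman's concentration-of-measure proof of Dvoretzky's theorem and to verify that its quantitative form yields $k\ge s(M)\ln n$ for a constant $s(M)$ depending only on the target distortion $M$. Throughout identify $X$ with $\mathbb{R}^n$.

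First I would apply F.~John's theorem to produce a Euclidean norm $|\cdot|$ on $X$ — arising from the maximal-volume ellipsoid inscribed in the unit ball of $\|\cdot\|$ — such that $\|x\|\le|x|\le\sqrt n\,\|x\|$ for every $x$. Let $S=\{x\in\mathbb{R}^n:|x|=1\}$ carry the normalized rotation-invariant measure $\sigma$, and set
\[
\mu=\int_S\|x\|\,d\sigma(x).
\]
Because $\bigl|\,\|x\|-\|y\|\,\bigr|\le\|x-y\|\le|x-y|$, the map $x\mapsto\|x\|$ is $1$-Lipschitz on $(S,|\cdot|)$, so L\'evy's isoperimetric inequality on the sphere delivers $\sigma\bigl\{x\in S:\bigl|\,\|x\|-\mu\,\bigr|>t\bigr\}\le C\exp(-cnt^2)$ for all $t>0$.

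Next I would let $E$ be a random $k$-dimensional subspace, drawn according to the Haar measure on the Grassmannian $G_{n,k}$, choose a $\delta$-net $\mathcal{N}\subset S\cap E$ of cardinality at most $(3/\delta)^k$, and apply the union bound: each point of $\mathcal{N}$, viewed as a random point in $S$, is marginally uniform, so with positive probability every $x\in\mathcal{N}$ satisfies $\bigl|\,\|x\|-\mu\,\bigr|\le\e\mu$, provided
\[
k\;\le\; c'\,\e^2 n\mu^2/\log(1/\delta).
\]
A standard successive-approximation argument then upgrades the estimate from $\mathcal{N}$ to the full sphere $S\cap E$, giving that $(E,\|\cdot\|)$ is $(1+O(\e))/(1-O(\e))$-isomorphic to $\ell_2^k$. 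Choosing $\e=\e(M)$ forces this distortion to be at most $M$.

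The hardest step is the lower bound $\mu\ge c\sqrt{(\log n)/n}$, without which the inequality above would only deliver $k$ of constant size. For this I would invoke the Dvoretzky--Rogers lemma, which in John's position furnishes an $|\cdot|$-orthonormal system $e_1,\dots,e_m$ with $m=\lfloor n/2\rfloor$ and $\|e_i\|\ge c_0$ for every $i$. Combined with Gaussian averaging — using that for standard Gaussian $g\in\mathbb{R}^n$ the direction $g/|g|$ is uniform on $S$ independently of $|g|$, so that $\mathbb{E}\|g\|=\mu\cdot\mathbb{E}|g|\asymp\mu\sqrt n$ — this reduces the task to showing $\mathbb{E}\|g\|\gtrsim\sqrt{\log n}$. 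The latter follows by restricting $g$ to the Dvoretzky--Rogers subspace and invoking the classical bound $\mathbb{E}\max_{i\le m}|g_i|\asymp\sqrt{\log m}$ for i.i.d.\ standard Gaussians. Substituting $\mu\gtrsim\sqrt{(\log n)/n}$ back into the previous display produces $k\ge s(M)\ln n$ with $s(M)$ depending only on $M$, which is the claim.
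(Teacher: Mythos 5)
The paper does not prove Theorem~\ref{T:DvoM} at all: it is quoted as a known result of Dvoretzky and Milman (\cite{Dvo61}, \cite{Mil71}), and your sketch is a correct rendition of exactly the argument behind that citation — Milman's concentration proof via John's position, L\'evy's isoperimetric inequality, a net and union bound on a random $k$-dimensional subspace, and the Dvoretzky--Rogers lemma combined with $\mathbb{E}\max_{i\le m}|g_i|\asymp\sqrt{\log m}$ to get $\mu\gtrsim\sqrt{(\log n)/n}$, which is what produces $k\gtrsim\e(M)^2\ln n$. So your proposal is correct and takes essentially the same (standard) route as the source the paper relies on.
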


We assume that $s(M)$ is chosen in the optimal way. Let $M\le K$.
Theorem \ref{T:DvoM} immediately implies that any metric space
which admits an embedding into $\ell_2^k$, where $k=\lceil
s\left({M}\right)\ln n\rceil$, with distortion $\le\frac{K}{M}$,
also admits an embedding with distortion $\le K$ into any
$n$-dimensional Banach space.
\medskip

This relates our study with the following major open problem of
the theory of metric embeddings: {\it Find an intrinsic
characterization of those separable metric spaces $(X, d_X)$ that
admit bilipschitz embeddings into $\ell_2^n$ for some $n\in\bbN$.}
See \cite{Sem99,LP01,Hei03} for a discussion of this problem. One
of the most important results on this problem is the Assouad
theorem, which we mention below (Theorem \ref{T:Assouad}).
However, it should be mentioned that in the present context we are
interested in the version of the problem for which the dimension
is specified, see very interesting recent results related to this
problem in \cite{NN12,DS13}, and related comments in \cite[Remark
3.16]{Hei03}.

\subsection{Equilateral spaces}\label{S:Equilateral}

It follows from standard volumetric estimates that the maximal
$\de$-separated set, for $\de>0$, contained in the unit ball of
any $n$-dimensional Banach space has cardinality at least
$\de^{-n}$.  It is also easy to see that a bijection between
any $\de$-separated set in the unit ball and an equilateral space
of the same cardinality has distortion $\le(2/\de)$. Therefore an
equilateral space of size $\le\left(\frac{K}2\right)^n$ admits an
embedding with distortion $\le K$ into an arbitrary
$n$-dimensional Banach space.

A standard  volumetric estimate also  gives an upper bound on the
cardinality of an equilateral space which can be embedded with
distortion at most $K$ in an $n$-dimensional Euclidean space.

\begin{lemma}\label{L:lowerequi} Let $\{a_i\}_{i=1}^{N}$
be an equilateral space which is $K$-embeddable into an
$n$-dimensional Banach space. Then $N\le (2K+1)^n$.
\end{lemma}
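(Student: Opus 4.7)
The plan is a standard volumetric packing argument in the finite-dimensional Banach space. First I would reduce to a convenient normalization: suppose the equilateral space $\{a_i\}_{i=1}^N$ has common distance $d>0$, and let $f$ be a $K$-embedding into the $n$-dimensional Banach space $X$. After composing $f$ with a positive scalar multiple, I may assume that $1 \le \|f(a_i) - f(a_j)\|_X \le K$ for all $i \ne j$. This rescaling does not change the distortion, and all that is being used is the equilateral property together with the two-sided Lipschitz bounds.

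Next I would extract the packing. Consider the open balls $B_i := B_X(f(a_i), 1/2)$ for $i = 1, \dots, N$. Because $\|f(a_i) - f(a_j)\|_X \ge 1$ whenever $i \ne j$, these balls are pairwise disjoint. On the other hand, every $f(a_j)$ lies within distance $K$ of $f(a_1)$, hence every ball $B_j$ is contained in the ball $B_X(f(a_1), K + 1/2)$.

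The main step is then the volume comparison. Identifying $X$ with $\mathbb{R}^n$ via a linear isomorphism and using the Lebesgue measure $\vol$ transported through this identification, the measure of a ball of radius $r$ in $X$ equals $r^n$ times the measure of the unit ball, by homogeneity of Lebesgue measure under dilations. Disjointness inside the big ball yields
\[
N \cdot \left(\tfrac{1}{2}\right)^n \vol(B_X(0,1)) \;=\; \sum_{i=1}^N \vol(B_i) \;\le\; \vol\bigl(B_X(f(a_1), K+\tfrac{1}{2})\bigr) \;=\; \left(K+\tfrac{1}{2}\right)^n \vol(B_X(0,1)).
\]
Dividing by $(1/2)^n \vol(B_X(0,1))$ gives $N \le (2K+1)^n$, as claimed.

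I do not foresee a genuine obstacle here; the only point that needs a brief justification is the scaling identity for volumes in a finite-dimensional normed space, but this is immediate from the change-of-variables formula once one fixes a linear identification with $\mathbb{R}^n$ (the choice of identification only rescales $\vol$ by a global constant, which cancels in the inequality).
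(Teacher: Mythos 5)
Your argument is correct and is essentially the paper's proof: the same volumetric packing of disjoint balls of half the minimal separation inside a ball of radius $K+\tfrac12$ (the paper just keeps the scale parameter $h$ instead of normalizing the minimal distance to $1$). Nothing further is needed.
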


\begin{proof} Let $\{a_i\}_{i=1}^{N}$ be equilateral with distances equal to $1$, and $\varphi: \{a_i\}_{i=1}^{N}\to X$ be a $K$-embedding.
We may assume that there is $h>0$ such that
\[h\le ||\varphi(a_i)-\varphi(a_j)||\le Kh.\]
Therefore $N\left(\frac h2\right)^n<\left(Kh+\frac h2\right)^n$
and $N \le (2K+1)^n$.\end{proof}

The most precise, known, estimates for the distortion of an
embedding of   exponential size equilateral spaces in any Banach
space can be obtained using the following result of
Arias-de-Reyna, Ball, and Villa \cite{ABV98} (an earlier version
of this result was proved by Bourgain, see \cite[Theorem
4.3]{FL94}, using Milman's \cite{Mil85} quotient of subspace
theorem).

\begin{theorem}[{\cite{ABV98}}]\label{T:ABV}
Let $\e>0$, $X$ be an $n$-dimensional Banach space, $B$ its closed
unit ball and $\mu$, the Lebesgue measure on $B$ normalized so
that $\mu(B)=1$. If $t=\sqrt{2}\,(1-\ep)$, then
\begin{equation*}\label{E:ABV}
\mu\otimes\mu\{(x,y)\in B\times B:~||x-y||\le t\}\le
(1-\ep^2(2-\ep)^2)^{\frac{n}2}.
\end{equation*}
\end{theorem}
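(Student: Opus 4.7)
The natural first step is the orthogonal (hence volume-preserving) change of variables
\[
(x,y)\longmapsto(a,b):=\bigl(\tfrac{x+y}{\sqrt{2}},\,\tfrac{x-y}{\sqrt{2}}\bigr)
\]
on $\R^{2n}$, which satisfies $\|x-y\|=\sqrt{2}\,\|b\|$. Writing $V=\vol(B)$, the image of $B\times B$ is the convex symmetric body
\[
K:=\bigl\{(a,b)\in\R^{n}\times\R^{n}:a+b,\,a-b\in\sqrt{2}\,B\bigr\}
\]
of volume $V^{2}$, and the set whose measure we need to bound becomes $K\cap\{\|b\|\le1-\varepsilon\}$. Thus the theorem is equivalent to
\[
\vol\bigl(K\cap\{\|b\|\le1-\varepsilon\}\bigr)\le\bigl(1-\varepsilon^{2}(2-\varepsilon)^{2}\bigr)^{n/2}\,V^{2}.
\]

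\textbf{Slicing and Brunn--Minkowski.} For fixed $b$, the fiber $K^{b}:=\{a:(a,b)\in K\}=(\sqrt{2}B-b)\cap(\sqrt{2}B+b)$ is convex and symmetric. Brunn's theorem applied to the convex body $K\subseteq\R^{2n}$ (equivalently, Pr\'ekopa--Leindler applied to its indicator) then gives: the function $\rho(b):=\vol_{n}(K^{b})$ satisfies $\rho^{1/n}$ is concave on its support $\sqrt{2}\,B$, is even in $b$, attains its maximum $\rho(0)=\vol(\sqrt{2}B)=2^{n/2}V$ at the origin, and (for strictly convex $B$, the general case following by approximation) vanishes on $\partial(\sqrt{2}B)$. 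The inequality to prove becomes the integral comparison
\[
\int_{\|b\|\le1-\varepsilon}\rho(b)\,db\;\le\;\bigl(1-\varepsilon^{2}(2-\varepsilon)^{2}\bigr)^{n/2}\int_{\|b\|\le\sqrt{2}}\rho(b)\,db.
\]

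\textbf{Integral comparison via radial averaging.} Rewriting $1-\varepsilon^{2}(2-\varepsilon)^{2}=s^{2}(2-s^{2})$ with $s=1-\varepsilon$, the target factor equals $t^{n}(1-t^{2}/4)^{n/2}$. Using concavity of $\rho^{1/n}$ and the symmetry $\rho(-b)=\rho(b)$, pass to polar coordinates based on the gauge of $B$ and reduce to a one-dimensional inequality for the radial profile $r\mapsto\int_{\partial B}\rho(r\eta)\,d\mu(\eta)$, where $\mu$ is the cone measure on $\partial B$. The $n$-concavity of $\rho$ along each ray, together with the vanishing at the boundary, forces the mass of $\rho$ in $\{\|b\|\le s\}$ to be at most the required fraction of the total mass $V^{2}$, with the factor $(1-t^{2}/4)^{n/2}$ emerging from the interaction of the radial weight $r^{n-1}$ with the concave profile.

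\textbf{Main obstacle.} The crux is that no pointwise estimate of the form $\rho(b)\le\rho(0)\,g(\|b\|)$ strong enough to imply the target is available in a general Banach space: when $B$ is a cube and $b$ points along a coordinate axis, $\rho(b)/\rho(0)$ decays only linearly in $\|b\|$ rather than as $(1-\|b\|^{2}/2)^{n/2}$, whereas for the Euclidean ball one has the parallelogram identity $\rho(b)/\rho(0)\le(1-\|b\|^{2}/2)^{n/2}$ which trivializes the argument. Hence the extra factor $(1-t^{2}/4)^{n/2}$ beyond the trivial estimate $t^{n}$ cannot come from pointwise control; it must be extracted by a global averaging argument on $\{\|b\|\le1-\varepsilon\}$ that uses Brunn--Minkowski on $K$ in $\R^{2n}$ holistically, balancing the ``flat'' and ``steep'' directions of $\rho$. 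Making this averaging precise and sharp is the principal technical difficulty.
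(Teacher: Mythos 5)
You are attempting to prove a statement that the paper does not prove at all: Theorem~\ref{T:ABV} is quoted with a citation to Arias-de-Reyna, Ball and Villa \cite{ABV98} (the paper only uses it, via the remark after it, to deduce Corollary~\ref{bestdist}), so there is no internal argument to compare yours with. Your reduction itself is fine as far as it goes: the rotation $(x,y)\mapsto\bigl(\tfrac{x+y}{\sqrt2},\tfrac{x-y}{\sqrt2}\bigr)$, Fubini, and Brunn's theorem correctly reduce the claim to the inequality $\int_{(1-\varepsilon)B}\rho(b)\,db\le\bigl(1-\varepsilon^2(2-\varepsilon)^2\bigr)^{n/2}\int_{\sqrt2\,B}\rho(b)\,db$ for $\rho(b)=\mathrm{vol}\bigl((\sqrt2\,B-b)\cap(\sqrt2\,B+b)\bigr)$, and your algebraic identification of the constant with $t^n(1-t^2/4)^{n/2}$ is correct.

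The gap is the final step. The assertion that ``the $n$-concavity of $\rho$ along each ray, together with the vanishing at the boundary, forces the mass of $\rho$ in $\{\|b\|\le s\}$ to be at most the required fraction'' is not an argument, and in the generality in which you set it up it is false. The properties you retain --- $\rho$ even, $\rho^{1/n}$ concave, support in $\sqrt2\,B$, maximum $\rho(0)=2^{n/2}V$, total mass $V^2$, vanishing on $\partial(\sqrt2\,B)$ --- do not imply the comparison: take $\rho^{1/n}(b)=\sqrt2\,V^{1/n}\min\bigl\{1,\;(\sqrt2-\|b\|)/(\sqrt2-c)\bigr\}$ (a plateau on $cB$ with a linear skirt), where $c\in(0,2^{-1/2})$ is chosen so that $\int\rho=V^2$. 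This function is even, has $\rho^{1/n}$ concave on $\sqrt2\,B$, vanishes on the boundary, and has the right values of $\rho(0)$ and $\int\rho$, yet for $s\le c$ one gets $\int_{sB}\rho=(\sqrt2\,s)^nV^2>(s\sqrt{2-s^2}\,)^nV^2$, contradicting the inequality you need (with $s=1-\varepsilon$). So the extra factor $(1-t^2/4)^{n/2}$ cannot be extracted from concavity, symmetry and boundary behaviour of the section function alone; one must use finer structural information about intersections of translates of a fixed convex body, or argue globally in a genuinely different way --- and that is precisely the content of \cite{ABV98}, whose short proof is a careful application of the Pr\'ekopa--Leindler inequality (an earlier, weaker version being due to Bourgain via the quotient-of-subspace theorem, as the paper notes). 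Your own ``Main obstacle'' paragraph concedes that this step is unresolved, so what you have is a correct change of variables plus an unproven --- and, as stated, untrue --- key lemma; to complete the argument you should follow \cite{ABV98} rather than push the slicing further.
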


Theorem \ref{T:ABV} implies that the set $L$ of points $x$ in $B$
for which
\[\mu\{y\in B:~||x-y||\le t\}\le
2(1-\ep^2(2-\ep)^2)^{\frac{n}2},\] satisfies $\mu(L)\ge\frac12$.
Choosing $t$-separated points in the set $L$, one by one, we get a
$t$-separated set of cardinality at least
\[\frac1{4\,(1-\ep^2(2-\ep)^2)^{\frac{n}2}}.\]
This implies
\begin{corollary}\label{bestdist}
For every $s>\sqrt{2}$ there exists $C(s)~
(=\ln(s^2/2\sqrt{s^2-1}))$ so that
 an equilateral set of size $\frac14\exp(C(s)n)$ embeds in any
$n$-dimensional Banach space $X$ with distortion $\le
s$.
\end{corollary}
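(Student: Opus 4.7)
The corollary follows by combining Theorem \ref{T:ABV} with a standard Markov-plus-greedy argument and then choosing the parameter $\ep$ to optimize $s$. Most of the work is already laid out in the two paragraphs preceding the corollary; what remains is essentially an algebraic calibration.

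My plan would be the following. First, I would fix the target distortion $s > \sqrt{2}$ and set $\ep = 1 - \sqrt{2}/s$, so that $t = \sqrt{2}(1-\ep) = 2/s$. Second, I would invoke Theorem \ref{T:ABV} to bound the product measure of pairs $(x,y) \in B \times B$ with $\|x-y\| \le t$ by $(1-\ep^2(2-\ep)^2)^{n/2}$, and apply Fubini together with Markov's inequality to conclude that the set
\[
L = \bigl\{\, x \in B : \mu\{y \in B : \|x-y\| \le t\} \le 2(1-\ep^2(2-\ep)^2)^{n/2}\,\bigr\}
\]
has $\mu(L) \ge 1/2$, as is explicitly noted in the excerpt.

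Third, I would select $t$-separated points in $L$ greedily: at each step the union of forbidden balls of radius $t$ around the already chosen points has measure at most (size of chosen set) $\cdot\, 2(1-\ep^2(2-\ep)^2)^{n/2}$, so the process terminates only once this quantity exceeds $\mu(L) \ge 1/2$. This yields a $t$-separated set in $B$ of cardinality at least $\tfrac{1}{4}(1-\ep^2(2-\ep)^2)^{-n/2}$. Since any two points of this set lie in $B$, their distances also satisfy $\|x-y\| \le 2$, so mapping an equilateral space of this cardinality bijectively onto the chosen points is a $K$-embedding with $K \le 2/t = s$.

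The only remaining step is to verify the formula $C(s) = \ln(s^2/(2\sqrt{s^2-1}))$. With the above choice of $\ep$ one computes
\[
\ep(2-\ep) = \left(1-\tfrac{\sqrt{2}}{s}\right)\left(1+\tfrac{\sqrt{2}}{s}\right) = 1 - \tfrac{2}{s^2},
\]
and hence $1 - \ep^2(2-\ep)^2 = 1 - (1-2/s^2)^2 = 4(s^2-1)/s^4$, so $(1-\ep^2(2-\ep)^2)^{n/2} = (2\sqrt{s^2-1}/s^2)^n$. This gives a lower bound of $\tfrac{1}{4}(s^2/(2\sqrt{s^2-1}))^n = \tfrac{1}{4}\exp(C(s)n)$ with the stated value of $C(s)$, and positivity of $C(s)$ for $s > \sqrt{2}$ amounts to $s^2 > 2\sqrt{s^2-1}$, i.e., $(s^2-2)^2 > 0$, which is automatic. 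There is no serious obstacle here; the only delicate point is keeping the calibration $t = 2/s$ and $\ep = 1-\sqrt{2}/s$ straight so that the exponent simplifies to exactly the advertised $C(s)$.
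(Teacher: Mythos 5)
Your proposal is correct and follows essentially the same route as the paper: Theorem~\ref{T:ABV} plus the Markov/Fubini step to get the set $L$ with $\mu(L)\ge\tfrac12$, the greedy selection of $t$-separated points of cardinality at least $\tfrac14(1-\ep^2(2-\ep)^2)^{-n/2}$, and the $2/t$ distortion bound for mapping an equilateral set onto a separated set in the unit ball. Your explicit calibration $\ep=1-\sqrt{2}/s$, $t=2/s$ and the resulting computation of $C(s)=\ln\bigl(s^2/(2\sqrt{s^2-1})\bigr)$ is exactly the algebra the paper leaves implicit in ``This implies.''
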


It
is a long standing open problem whether $\sqrt{2}$ in Corollary~\ref{bestdist} can be replaced by 1. That is

\begin{problem}\label{P:AlmEQ}
Does there exist a function $C:(0,1)\to (0,\infty)$  such that for
each $\varepsilon\in(0,1)$ and each $n\in\bbN$, an equilateral
space of size $\exp(C(\e)n)$ embeds in any $n$-dimensional Banach
space $X$ with distortion $\le 1+\varepsilon$?
\end{problem}

The answer is known to be positive for Banach space $X$ with a
1-subsymmetric basis \cite{BBK89}, for uniformly convex Banach
spaces \cite{ABV98} (the function $C(\ep)$ depends on the modulus
of convexity), and in some other cases, see \cite{BB91,BPS95}.

In the sequel we will use bilipschitz embeddings of equilateral
sets into unit spheres of finite-dimensional Banach spaces. For
completeness we include a simple proof with the specific constants
that we will use.

\begin{lemma}\label{L:apart}
There exists a constant $\delta\ge\frac1{16}$, so that for every
$m\ge 1$ the unit sphere of every $m$-dimensional Banach space $X$
contains $2\cdot4^{m-1}$ elements of  mutual distance at least $\delta$.
\end{lemma}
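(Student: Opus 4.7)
My plan is to combine a volumetric packing argument in $B_X$ with a radial projection to the unit sphere, handling the two smallest dimensions by hand. First, setting $\delta = \tfrac{1}{8}$, I would fix a maximal $\delta$-separated subset $M \subset B_X$. By maximality $M$ is a $\delta$-net of $B_X$, so the inclusion $B_X \subseteq M + \delta B_X$ combined with a comparison of Lebesgue measures in $X$ yields $|M| \geq \delta^{-m} = 8^m$.

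Next, I would partition $M$ into $16$ equal norm-buckets $M_k = \{y \in M : (k-1)/16 < \|y\| \leq k/16\}$, $k = 1, \ldots, 16$, and, by pigeonhole, fix a bucket $M_{k^*}$ with $|M_{k^*}| \geq 8^m/16$. All points of $M_{k^*}$ are nonzero, so the radial map $y \mapsto u(y) := y/\|y\|$ sends $M_{k^*}$ into $S_X$. The crucial inequality is that for $y_1, y_2 \in M_{k^*}$ with $\lambda_i := \|y_i\|$, a direct computation (using $|\lambda_1 - \lambda_2| \leq \|y_1 - y_2\|$ and $\lambda_i \leq 1$) gives
\[
\|u(y_1) - u(y_2)\| \geq \frac{\|y_1 - y_2\| - |\lambda_1 - \lambda_2|}{\min(\lambda_1, \lambda_2)} \geq \|y_1 - y_2\| - |\lambda_1 - \lambda_2| \geq \tfrac{1}{8} - \tfrac{1}{16} = \tfrac{1}{16},
\]
since two points in the same bucket satisfy $|\lambda_1 - \lambda_2| \leq 1/16$. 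Thus $u(M_{k^*})$ is a $(1/16)$-separated subset of $S_X$ of size at least $8^m/16$, which exceeds $2 \cdot 4^{m-1}$ precisely when $m \geq 3$.

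The main obstacle is that this chain of estimates is not tight enough in dimensions $m \in \{1,2\}$, and those cases must be treated separately. For $m = 1$ the pair $\pm x_0$ for any $x_0 \in S_X$ gives the required two points (at distance $2$). For $m = 2$ I would invoke an Auerbach basis $(e_1, e_2)$ of $X$, for which $\|e_1\| = \|e_2\| = 1$ and $\|a_1 e_1 + a_2 e_2\| \geq \max(|a_1|, |a_2|)$, and write down the eight sphere points $\pm e_1$, $\pm e_2$, $\pm (e_1+e_2)/\|e_1+e_2\|$, $\pm (e_1-e_2)/\|e_1-e_2\|$. Since $1 \leq \|e_1 \pm e_2\| \leq 2$, a short coordinatewise application of the Auerbach inequality to the (few, by symmetry) distinct pair types shows that all pairwise distances among these eight points are at least $1/2 > 1/16$, completing the argument.
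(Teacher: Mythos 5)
Your proof is correct, but it runs along a different track than the paper's. The paper packs directly on the sphere: it takes a maximal $\delta$-separated set $T\subset S_X$, notes that the balls of radius $2\delta$ centred at $T$ cover the shell between the spheres of radii $1-\delta$ and $1+\delta$, and compares volumes to get $|T|\ge\bigl((1+\delta)^m-(1-\delta)^m\bigr)/(2\delta)^m$; with $\delta=1/16$ the elementary bound $(1+\delta)^m-(1-\delta)^m\ge 2m\delta$ then settles every $m\ge 2$, leaving only $m=1$ as a special case. You instead pack the whole ball (a maximal $\tfrac18$-separated set has $\ge 8^m$ points), pigeonhole into $16$ norm-shells, and push the largest shell radially onto the sphere; the radial-projection estimate within a shell correctly yields separation $\ge\tfrac1{16}$, but the loss of a factor $16$ in cardinality forces you to treat $m=1,2$ by hand, and your Auerbach-basis configuration for $m=2$ does work (all eight points are at mutual distance $\ge\tfrac12$, e.g. $\|e_1\pm(e_1+e_2)/\|e_1+e_2\|\|\ge 1/\|e_1+e_2\|\ge\tfrac12$ by biorthogonality). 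Both arguments are volumetric at heart; the paper's shell-covering count is shorter and uniform down to $m=2$, while your ball-packing-plus-projection route is more modular and the projection trick is reusable, at the price of extra case analysis.

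One small point to tidy: if the origin happens to lie in your maximal separated set it belongs to no bucket, so the pigeonhole really gives a bucket of size $\ge\lceil(8^m-1)/16\rceil$. Nothing breaks --- even in the borderline case $m=3$, where $8^m/16=2\cdot 4^{m-1}$ exactly, integrality still yields $32$ points --- but you should either discard the origin explicitly or run the maximality argument in $B_X\setminus\{0\}$.
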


\begin{proof} Let $\delta>0$ and $T$ be the maximal $\delta$-separated set on
$S_X$. Then $(2\delta)$-balls centered at $T$ cover the set of
points between the spheres of radius $(1+\delta)$ and
$(1-\delta)$. Therefore the cardinality $|T|$ of $T$ satisfies:

\[|T|\ge\frac{(1+\delta)^m-(1-\delta)^m}{(2\delta)^m}.\]

So we need $\delta$ such that the following inequality holds for
all $m\ge 1$:
\[\frac12(8\delta)^m\le (1+\delta)^m-(1-\delta)^m.\]

Let $\delta=\frac1{16}$. Since the right-hand side is
$\ge2m\delta$, the conclusion follows for $m\ge 2$. For $m=1$ the
conclusion is obvious.
\end{proof}

\begin{corollary}\label{C:ABitMore} If we replace  $2\cdot 4^{m-1}$
by $2^m$ in the statement of Lemma~\ref{L:apart},  we can  take
${\disp \de=1/8}$.
\end{corollary}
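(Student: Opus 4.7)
The plan is to repeat the volumetric argument from Lemma~\ref{L:apart} with the sharper parameter $\delta=1/8$, only adjusting the target cardinality from $2\cdot 4^{m-1}$ to $2^m$. For a maximal $\delta$-separated set $T$ on $S_X$, the same covering argument (the $2\delta$-balls centered at $T$ cover the annulus between the spheres of radii $1-\delta$ and $1+\delta$) still yields
\[
|T|\ge\frac{(1+\delta)^m-(1-\delta)^m}{(2\delta)^m}.
\]
So the task reduces to verifying
\[
(1+\delta)^m-(1-\delta)^m\ge (4\delta)^m,
\]
which for $\delta=1/8$ is $(9/8)^m-(7/8)^m\ge (1/2)^m$, equivalently $9^m-7^m\ge 4^m$.

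For $m\ge 2$ I would use the odd-index binomial expansion
\[
(1+\delta)^m-(1-\delta)^m=2\sum_{k\text{ odd}}\binom{m}{k}\delta^k\ge 2m\delta=\tfrac{m}{4},
\]
and observe that $(4\delta)^m=(1/2)^m\le 1/4\le m/4$ whenever $m\ge 2$, which closes the inequality. The case $m=1$ is not covered by this estimate (indeed $(9/8)-(7/8)=1/4<1/2$), but it is immediate by hand: for any unit vector $e\in S_X$ the two points $\pm e$ lie on $S_X$ and have mutual distance $2\ge 1/8$, giving the required $2^1=2$ elements.

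The only subtlety, and the reason this is stated as a separate corollary rather than folded into Lemma~\ref{L:apart}, is that with $\delta$ raised to $1/8$ the crude Bernoulli bound $2m\delta$ is no longer comfortably larger than $(4\delta)^m$ at $m=1$; I expect no real obstacle beyond isolating that edge case and handling it directly.
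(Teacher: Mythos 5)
Your proposal is correct and follows essentially the same route the paper intends: the corollary is meant to be read off from the volumetric covering bound in the proof of Lemma~\ref{L:apart}, with the arithmetic adjusted to $\de=1/8$ and target cardinality $2^m$ (i.e.\ checking $(4\de)^m\le(1+\de)^m-(1-\de)^m$ via the bound $2m\de$ for $m\ge2$), and with $m=1$ handled separately as obvious, exactly as you do with $\pm e$. Your explicit note that the $m=1$ case genuinely fails the crude bound (since $9-7<4$) is a correct reading of why that case must be treated by hand, just as in the paper's proof of the lemma itself.
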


\subsection{Metric compositions}\label{S:Composition}

The following general construction of combining metric spaces was
introduced by Bartal, Linial, Mendel and Naor for their study of
metric Ramsey-type phenomena \cite{BLMN05}.

\begin{definition}[Metric composition, \cite{BLMN05}]\label{D:MetrComp}
Let $M$ be a finite metric space. Suppose that there is a
collection of disjoint finite metric spaces $N_x$ associated with
the elements $x$ of $M$. Let $\N = \{ N_x \}_{x \in M}$. For
$\beta\geq 1/2$, the {\it $\beta$-composition} of $M$ and $\N$,
denoted by $M_\beta[\N]$, is a metric space on the disjoint union
$\dot {\boldsymbol{\cup}}_x N_x$. Distances in $M_\beta[\N]$ are
denoted $d_\be$ and defined as follows. Let $x,y \in M$ and $u \in
N_x, v \in N_y$; then:
$$ d_\be(u,v)= \begin{cases} d_{N_x}(u,v) & x=y \\
   \beta \gamma d_M(x,y) & x\neq y,\end{cases}  $$ where $\gamma=\frac{\max_{z \in M}
\diam(N_z)}{\min_{x\neq y \in M} d_M(x,y)}$. It is easily checked
that the choice of the factor $\beta\gamma$ guarantees that
$d_\be$ is indeed a metric.
\end{definition}

We prove that the metric composition preserves embeddability
properties of metric spaces in the following sense.

\begin{theorem}\label{T:Mbeta}
Let $C,D\ge1$. Let $E$ be a Banach space and $M$ be a finite
metric space which is $C$-embeddable in $E$. Let
${\N}=\{N_x\}_{x\in M}$ be a family of finite metric spaces which
are $D$-embeddable in $E$. Let $\be>2(C+1)$. Then $M_\be[\N]$ is
$A$-embeddable in $E$, where ${\disp A=\max\left(D,
C+\frac{2C(C+1)}{\be-2(C+1)}\right)}$.

In particular, if $\e>0$, $D\le C$, and ${\disp
\be>2\left(\frac{C+\e}{\e}\right)(C+1)}$ then $M_\be[\N]$ is
$(C+\e)$-embeddable in $E$.
\end{theorem}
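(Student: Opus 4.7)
The plan is to construct an explicit embedding $h\colon M_\be[\N]\to E$ by superimposing a rescaled copy of the embedding of $M$ onto the embeddings of the individual pieces $N_x$. First I would normalize: let $f\colon M\to E$ be a $C$-embedding satisfying
\[
d_M(x,y)\le\|f(x)-f(y)\|\le C\,d_M(x,y),
\]
and for each $x\in M$ let $g_x\colon N_x\to E$ be a $D$-embedding satisfying
\[
d_{N_x}(u,v)\le\|g_x(u)-g_x(v)\|\le D\,d_{N_x}(u,v),
\]
translated so that $g_x(p_x)=0$ for a fixed base point $p_x\in N_x$. In particular $\|g_x(u)\|\le D\Delta$ for every $u\in N_x$, where $\Delta:=\max_{z\in M}\diam(N_z)$. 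With a positive scalar $\la$ to be chosen momentarily, define
\[
h(u):=\la\,f(x)+g_x(u)\qquad\text{for }u\in N_x.
\]

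The key computation is to bound the ratio $\|h(u)-h(v)\|/d_\be(u,v)$ in two regimes. For $u,v$ in the same piece $N_x$, the $f$-contribution cancels, so $\|h(u)-h(v)\|=\|g_x(u)-g_x(v)\|$ and the ratio lies in $[1,D]$. For $u\in N_x$, $v\in N_y$ with $x\ne y$, the triangle inequality yields
\[
\la\|f(x)-f(y)\|-2D\Delta\le\|h(u)-h(v)\|\le\la\|f(x)-f(y)\|+2D\Delta.
\]
Combining this with the identity $d_\be(u,v)=\be\g\,d_M(x,y)$ from Definition~\ref{D:MetrComp} and the elementary inequality $\Delta\le d_\be(u,v)/\be$ (immediate from $\g=\Delta/\min_{x\ne y}d_M(x,y)$) gives
\[
\frac{\|h(u)-h(v)\|}{d_\be(u,v)}\in\left[\frac{\la}{\be\g}-\frac{2D}{\be},\ \frac{\la C}{\be\g}+\frac{2D}{\be}\right].
\]

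I would then calibrate $\la:=\g(\be+2D)$, so that the inter-cluster lower ratio equals the intra-cluster lower ratio $1$. With this choice the inter-cluster ratios lie in $[1,\,C+2D(C+1)/\be]$, and combining with the intra-cluster interval $[1,D]$ yields
\[
\dist(h)\le\max\!\left(D,\ C+\frac{2D(C+1)}{\be}\right).
\]
A brief monotonicity check using the hypothesis $\be>2(C+1)$ shows this is bounded above by $A$: when $D\le C$, one uses $\frac{2D(C+1)}{\be}\le\frac{2C(C+1)}{\be-2(C+1)}$; when $D>C$, either $C+2D(C+1)/\be\le D\le A$, or else $D<C\be/(\be-2(C+1))$ and again $\frac{2D(C+1)}{\be}<\frac{2C(C+1)}{\be-2(C+1)}$. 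The ``in particular'' clause is a direct consequence, obtained by solving $\frac{2C(C+1)}{\be-2(C+1)}\le\ep$ for $\be$.

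I do not anticipate any substantial obstacle. The only delicate point is the calibration of $\la$, which must ensure that the diameter error $2D\Delta$ inside each cluster is absorbed by the $f$-displacement $\la\|f(x)-f(y)\|$ between clusters; the hypothesis $\be>2(C+1)$ is what permits this balance while producing the precise form of $A$.
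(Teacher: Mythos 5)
Your proposal is correct and follows essentially the same approach as the paper's proof: embed each cluster via a base-point-normalized embedding of $N_x$ and superimpose a scaled copy of the embedding of $M$, then estimate the two regimes (same cluster, different clusters) by the triangle inequality. The only difference is the calibration — the paper normalizes the embeddings to be $1$-Lipschitz and takes the shift $(\be-2)\g\Psi(x)$, obtaining distortion $\max\left(D,\frac{C\be}{\be-2C-2}\right)$ which equals $A$ exactly, whereas your choice $\la=\g(\be+2D)$ gives $\max\left(D,\,C+\frac{2D(C+1)}{\be}\right)$, which you then correctly verify is at most $A$.
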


\begin{proof}
\Buo we can and do assume that
\begin{equation}\label{E:AtLeast1}\min_{x\ne y\in M}d_M(x,y)=1.\end{equation}
Let $\g=\max_{x\in M}(\diam N_x).$ We denote the metric in
$M_\be[\N]$ by $d_\be$. For all $y_1,y_2\in M_\be[\N]$ there exist
$x_1,x_2\in M$ so that $y_1\in N_{x_1}$, $y_2\in N_{x_2}$. We have
\[d_\be(y_1,y_2)=\begin{cases} d_{N_{x}}(y_1,y_2) &\text{\rm if}\ \ x_1=x_2=x,\\
\be\g d_M(x_1,x_2) &\text{\rm if}\ \ x_1\ne x_2.
\end{cases}
\]

By assumption, there exists $\Psi:M\lra E$ so that for all
$x_1,x_2\in M$,
\[ \frac1C d_M(x_1,x_2) \le \|\Psi(x_1)-\Psi(x_2)\| \le d_M(x_1,x_2). \]

Also, for all $x\in M$ there exist $\Phi_x:N_x\lra E$,  so that
for all $y, y_1,y_2\in N_x$
\begin{equation}\label{lipphix}
 \frac1D d_{N_{x}}(y_1,y_2) \le \|\Phi_x(y_1)-\Phi_x(y_2)\| \le d_{N_{x}}(y_1,y_2),
\end{equation}
and
\begin{equation}\label{phidiam}
\|\Phi_x(y)\| \le \diam(N_x).
\end{equation}

Let $\la=\be -2$, and define $\Phi:M_\be[\N]\lra E$ by
\[\Phi(y)=\Phi_x(y)+\la\g\Psi(x),\]
if $y\in N_x$ for some $x\in M$.

We claim that $\Phi$ is an $A$-embedding, where  ${\disp
A=\max\left(D, C+\frac{2C(C+1)}{\be-2(C+1)}\right)}$.

First we consider  $y_1,y_2\in M_\be[\N]$ so that there
exists $x \in M$ with $y_1,y_2\in N_{x}$. Then
$d_\be(y_1,y_2)=d_{N_{x}}(y_1,y_2)$, and
\[
\Phi(y_1)-\Phi(y_2)=\Phi_x(y_1)+\la\g\Psi(x)-(\Phi_x(y_2)+\la\g\Psi(x))
=\Phi_x(y_1)-\Phi_x(y_2).\] Thus by \eqref{lipphix} we have
\begin{equation}\label{lipphisamex}
 \frac1D d_\be(y_1,y_2) \le \|\Phi(y_1)-\Phi(y_2)\| \le d_{\be}(y_1,y_2).
\end{equation}

Next we consider  $y_1,y_2\in M_\be[\N]$ so that $y_1\in
N_{x_1}$, $y_2\in N_{x_2}$, for some $x_1,x_2\in M$, with $x_1\ne
x_2$.  Then $d_\be(y_1,y_2)=\be\g d_M(x_1,x_2)$, and
\[
\begin{split}
\|\Phi(y_1)-\Phi(y_2)\|&=\|\Phi_{x_1}(y_1)+\la\g\Psi(x_1)-(\Phi_{x_2}(y_2)+\la\g\Psi(x_2))\|\\
&\le \|\Phi_{x_1}(y_1)\|+\|\Phi_{x_2}(y_2)\|+\la\g\|\Psi(x_1)-\Psi(x_2)\|\\
&\le 2\g + \la\g d_M(x_1,x_2) \stackrel{\mbox{\tiny  {\rm by }\eqref{E:AtLeast1}}}{\le}
(2\g + \la\g) d_M(x_1,x_2)\\
&=\be\g d_M(x_1,x_2)=d_\be(y_1,y_2).
\end{split}
\]

On the other hand
\[
\begin{split}
\|\Phi(y_1)-\Phi(y_2)\|&=\|\Phi_{x_1}(y_1)+\la\g\Psi(x_1)-(\Phi_{x_2}(y_2)+\la\g\Psi(x_2))\|\\
&\ge\la\g\|\Psi(x_1)-\Psi(x_2)\|- \|\Phi_{x_1}(y_1)\|-\|\Phi_{x_2}(y_2)\|\\
&\ge  \frac1C\la\g d_M(x_1,x_2)-2\g\stackrel{\mbox{\tiny  {\rm by }\eqref{E:AtLeast1}}}{\ge} \left(\frac1C\la - 2\right)\g d_M(x_1,x_2)\\
&=\left(\frac1C(\be-2) - 2\right)\g d_M(x_1,x_2)
\\
&=\frac{\be-2C - 2}{C\be} \be\g d_M(x_1,x_2)\\
&=\frac{\be-2C - 2}{C\be} d_\be(y_1,y_2).
\end{split}
\]

This, together with \eqref{lipphisamex}, implies that $\Phi$ is an
$A$-embedding of $ M_\be[\N]$ into $E$.

Note that if ${\disp \be>2\left(\frac{C+\e}{\e}\right)(C+1)}$ then
\[
\begin{split}
\frac{C\be}{\be-2C - 2}&=C+\frac{2C(C+1)}{\be-2C - 2}< C+\frac{2C(C+1)}{2(1+\frac C\e)(C+1)-2(C+1)}=C+\e.
\end{split}
\]

Thus, if $D\le C$, then $A\le C+\e$.
\end{proof}

\begin{definition}\cite{BLMN05}\label{D:Comp}
Given a class $\cal{M}$ of finite metric spaces, and $\be\ge1$, we
define its closure $\comp_\be(\cal{M})$ under $\ge
\be$-compositions as the smallest class $\cal{C}$ of metric spaces
that contains all spaces in $\cal{M}$ and satisfies the following
condition: Let $M\in\cal{M}$, $\be'\ge\be$, and associate with
every $x\in M$ a metric space $N_x$ that is isometric to a space
in $\cal{C}$. Then the space $M_{\be'}[\N]$ is also in
$\cal{C}$.
\end{definition}

Note that $\comp_\be(\cal{M})$ can be described as a union of
smaller classes which have increasing complexity. More precisely
\begin{equation}\lb{complexity}
\comp_\be({\cal{M}})=\bigcup_{m=0}^\infty {\cal{C}}_m,
\end{equation}
where ${\cal{C}}_0=\cal{M}$, and for $m\in\bbN$, ${\cal{C}}_m$ is
the class of metric spaces of the form $M_{\be'}[\N]$, where
$M\in\cal{M}$, $\be'\ge\be$, and ${\N}=\{N_x\}_{x\in M}$,
where  for every $x\in M$ a metric space $N_x$  is isometric to a
space in $\bigcup_{i=0}^{m-1}{\cal{C}}_{i}$.

The operation of composition has the following associativity
property.

\begin{proposition}\label{P:trans-comp}
Let $M$ be a finite metric space, $\be_1\ge 1/2, \be_2\ge 1$,
$\N_1=\{N_x\}_{x\in M}$ and $\N_2=\{\tilde{N}_y\}_{y\in
M_{\be_1}(\N_1)}$ be   families of finite metric spaces. Then
\[\left(M_{\be_1}(\N_1)\right)_{\be_2}(\N_2)=M_{\be_1}(\N_3),\]
where $\N_3$ is a  family  of finite metric spaces of the form  $(N_x)_{\be_x}(\{\tilde{N}_{y}\}_{y\in N_x})$, where $x\in M$ and $\be_x\ge\be_2$.
\end{proposition}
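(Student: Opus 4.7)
The proposition is an associativity-type identity: the iterated composition on the left regroups as a single outer composition of $M$ whose inner spaces are themselves one-step compositions. My plan is to exhibit the inner spaces $Q_x$, pin down the scaling constants in each layer, and verify that all pairwise distances coincide on the nose. Introduce $\g=\max_{x\in M}\diam(N_x)/\min_{x\ne x'}d_M(x,x')$ and $\tilde\g=\max_y\diam(\tilde N_y)/\min_{y\ne y'}d_{\be_1}(y,y')$, where $d_{\be_1}$ denotes the metric on $M_{\be_1}(\N_1)$ and the latter minimum is over distinct points of that space. For each $x\in M$ with $|N_x|\ge 2$, set $\g_x=\max_{y\in N_x}\diam(\tilde N_y)/\min_{y\ne y'\in N_x}d_{N_x}(y,y')$ and $\be_x=\be_2\tilde\g/\g_x$; if $|N_x|=1$ the block reduces to a single $\tilde N_y$ and no composition is needed. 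Let $Q_x$ be the set $\bigsqcup_{y\in N_x}\tilde N_y$ carrying the metric inherited from $(M_{\be_1}(\N_1))_{\be_2}(\N_2)$, and set $\N_3=\{Q_x\}_{x\in M}$.

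The core of the argument is a three-case distance check. If $u_1,u_2$ lie in the same $\tilde N_y$, both sides return $d_{\tilde N_y}(u_1,u_2)$. If $u_i\in\tilde N_{y_i}$ with $y_1\ne y_2$ both in the same $N_x$, the left-hand side gives $\be_2\tilde\g\,d_{N_x}(y_1,y_2)$, while $(N_x)_{\be_x}(\{\tilde N_y\}_{y\in N_x})$ gives $\be_x\g_x\,d_{N_x}(y_1,y_2)$, and these agree by the choice of $\be_x$. The main obstacle, and the only step that invokes $\be_2\ge 1$, is the between-$Q_x$ case: I must show that the outer scaling constant $\g'=\max_x\diam(Q_x)/\min_{x\ne x'}d_M(x,x')$ equals $\be_2\tilde\g\g$. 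This reduces to the identity $\diam(Q_x)=\be_2\tilde\g\diam(N_x)$, i.e.\ that the between-$\tilde N_y$ diameters in $Q_x$ dominate the within-$\tilde N_y$ ones. Concretely, I need $\be_2\tilde\g\diam(N_x)\ge\max_{y\in N_x}\diam(\tilde N_y)$, which follows because any two distinct $y_1,y_2\in N_x$ satisfy $d_{\be_1}(y_1,y_2)=d_{N_x}(y_1,y_2)$, so $\diam(N_x)\ge\min_{y\ne y'}d_{\be_1}(y,y')$, and hence $\tilde\g\diam(N_x)\ge\max_y\diam(\tilde N_y)\ge\max_{y\in N_x}\diam(\tilde N_y)$. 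Once $\g'=\be_2\tilde\g\g$ is established, the between-$Q_x$ distance $\be_1\g'd_M(x_1,x_2)$ matches the left-hand-side distance $\be_2\tilde\g\cdot\be_1\g\,d_M(x_1,x_2)$.

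It remains to verify $\be_x\ge\be_2$: the numerator of $\tilde\g$ is a max over a superset of the $y$'s used for $\g_x$, and its denominator is a min over a superset of the pair distances within $N_x$ (which coincide with $d_{N_x}$ there), so $\tilde\g\ge\g_x$. Thus $\N_3$ has the advertised form $(N_x)_{\be_x}(\{\tilde N_y\}_{y\in N_x})$ with $\be_x\ge\be_2$, and the three distance checks above complete the identification $(M_{\be_1}(\N_1))_{\be_2}(\N_2)=M_{\be_1}(\N_3)$.
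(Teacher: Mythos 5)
Your proposal is correct and follows essentially the same route as the paper: you regroup the iterated composition into blocks $Q_x=\dot{\boldsymbol{\cup}}_{y\in N_x}\tilde N_y$ with the inherited metric, identify each block as $(N_x)_{\be_x}(\{\tilde N_y\}_{y\in N_x})$ with $\be_x\ge\be_2$ via $\g_x\le\tilde\g$, and use $\be_2\ge 1$ to show the between-block distances determine $\diam(Q_x)$ so that the outer scaling constant matches. The only (cosmetic) difference is that you verify the diameter identity blockwise directly from $\diam(N_x)\ge\min_{y\ne y'}d_{\be_1}(y,y')$, whereas the paper runs the equivalent argument by contradiction at the level of $\max_x\diam(N^\star_x)$.
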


\begin{proof}
For each $x\in M$, define $N^\star_x$  as the disjoint union of $\tilde{N}_y$ over $y\in N_x$, with the metric inherited from $\left(M_{\be_1}(\N_1)\right)_{\be_2}(\N_2)$. Let $\N_3$ be the collection of all $N^\star_x$
\[\N_3=\{N^\star_x\}_{x\in M}=\{\dot
{\boldsymbol{\cup}}_{y\in N_x} \tilde{N}_y\}_{x\in M}.\]

Denote the metric in $\left(M_{\be_1}(\N_1)\right)_{\be_2}(\N_2)$
by $d$. For all $z_1, z_2\in
\left(M_{\be_1}(\N_1)\right)_{\be_2}(\N_2)$ there exist $y_1,
y_2\in M_{\be_1}(\N_1)$ so that $z_1\in \tilde{N}_{y_1}$, $z_2\in
\tilde{N}_{y_2}$, and $x_1, x_2 \in M$ so that $y_1\in N_{x_1}$,
$y_2\in N_{x_2}$. We have
\[d(z_1,z_2)=\begin{cases} d_{\tilde{N}_{y}}(z_1,z_2) \ \ &\text{\rm if } z_1,z_2\in \tilde{N}_{y};\\
\be_2\g_2 d_{N_x}(y_1,y_2) &\text{\rm if } z_1,z_2\in N^\star_x, y_1\ne y_2;\\
\be_2\g_2 \be_1\g_1d_{M}(x_1,x_2) &\text{\rm if } z_1\in N^\star_{x_1} ,z_2\in N^\star_{x_2}, x_1\ne x_2,
\end{cases}
\]
where
\[\g_1=\frac{\max_{x\in M} \diam(N_x)}{\min_{x_1\ne x_2\in M} d_M(x_1,x_2)} ; \ \ \ \  \ \ \ \
\g_2=\frac{\max_{y\in M_{\be_1}(\N_1)} \diam(\tilde{N}_{y})}{\min_{y_1\ne y_2\in M_{\be_1}(\N_1)} d_{M_{\be_1}(\N_1)} (y_1,y_2)}.\]

We claim that for every $x\in M$, there exists $\be_x\ge \be_2$ so that
\begin{equation}\label{N3}
N^\star_x=(N_x)_{\be_x}(\{\tilde{N}_{y}\}_{y\in N_x}).
\end{equation}
Indeed, it is enough to observe that for every $x\in M$, $\g_x\le\g_2$, where
\[\g_x\DEF\frac{\max_{y\in N_x} \diam(\tilde{N}_{y})}{\min_{y_1\ne y_2\in N_x} d_{N_x} (y_1,y_2)}.\]

To prove that
\[\left(M_{\be_1}(\N_1)\right)_{\be_2}(\N_2)=M_{\be_1}(\N_3),\]
we define
\[\g_3\DEF\frac{\max_{x\in M} \diam(N^\star_x)}{\min_{x_1\ne x_2\in M} d_M(x_1,x_2)}.\]
Observe that
\[\max_{x\in M} \diam(N^\star_x)=\max\left(\be_2\g_2\max_{x\in M} \diam(N_x), \max_{y\in M_{\be_1}(\N_1)} \diam(\tilde{N}_{y})\right).\]
If
\[\max_{y\in M_{\be_1}(\N_1)} \diam(\tilde{N}_{y})>\be_2\g_2\max_{x\in M} \diam(N_x),
\]
then, by definition of $\g_2$,
\[\min_{y_1\ne y_2\in M_{\be_1}(\N_1)} d_{M_{\be_1}(\N_1)} (y_1,y_2)>\be_2\max_{x\in M} \diam(N_x),\]
and thus, since $\be_2\ge1$,
\[\min_{x\in M}\min_{y_1\ne y_2\in N_x} d_{N_x} (y_1,y_2)>\max_{x\in M} \diam(N_x),\]
which is impossible. Thus
\[\max_{x\in M} \diam(N^\star_x)=\be_2\g_2\max_{x\in M} \diam(N_x).\]
Hence
\[\frac{\be_1\be_2\g_1\g_2}{\g_3}=\be_1,\]
and the proposition is proven.
\end{proof}

As an immediate corollary of Proposition~\ref{P:trans-comp} and
\eqref{N3} we obtain.

\begin{corollary} \label{compcomp}
Let $\M$ be a family of finite metric spaces and $\be\ge 1$. Then $\comp_\be(\comp_\be(\M))= \comp_\be(\M)$.
\end{corollary}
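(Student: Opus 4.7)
The inclusion $\comp_\be(\M)\subseteq\comp_\be(\comp_\be(\M))$ is immediate from Definition~\ref{D:Comp}, since $\comp_\be$ always contains its argument. My plan for the reverse inclusion is to exploit the stratification~\eqref{complexity} twice, in a nested induction, converting each outer composition into an inner one via the associativity provided by Proposition~\ref{P:trans-comp}.

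Apply \eqref{complexity} to the \emph{outer} closure and write $\comp_\be(\comp_\be(\M))=\bigcup_{m\ge 0}\mathcal{D}_m$, where $\mathcal{D}_0=\comp_\be(\M)$ and, for $m\ge 1$, $\mathcal{D}_m$ consists of spaces $M_{\be'}[\N]$ with $M\in\comp_\be(\M)$, $\be'\ge\be$, and each $N_x\in\bigcup_{i<m}\mathcal{D}_i$. I induct on $m$: the base case is trivial, and in the inductive step the hypothesis already places each $N_x$ in $\comp_\be(\M)$. Thus the whole corollary reduces to the following subclaim: if $M,N_x\in\comp_\be(\M)$ for every $x$, and $\be'\ge\be$, then $M_{\be'}[\N]\in\comp_\be(\M)$.

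For the subclaim I would run a second induction, this time on the stratification level $k$ of the base $M$ inside $\comp_\be(\M)$. When $k=0$ we have $M\in\M$, so $M_{\be'}[\N]\in\comp_\be(\M)$ directly by Definition~\ref{D:Comp}. When $k\ge 1$, write $M=M'_{\be''}[\N']$ with $M'\in\M$, $\be''\ge\be$, and every member of $\N'$ of level strictly less than $k$. Proposition~\ref{P:trans-comp} then rewrites
\[\bigl(M'_{\be''}[\N']\bigr)_{\be'}[\N]=M'_{\be''}[\N_3],\]
where $\N_3$ consists of spaces of the form $(N'_{x'})_{\be_{x'}}[\{N_y\}_{y\in N'_{x'}}]$ with each $\be_{x'}\ge\be'\ge\be$. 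Each such space is a composition of a base of level $<k$ in $\comp_\be(\M)$ with members of $\comp_\be(\M)$ at parameter $\ge\be$, so the inner inductive hypothesis places it in $\comp_\be(\M)$. Hence $M'_{\be''}[\N_3]$ is a $\be''$-composition of a base in $\M$ with members of $\comp_\be(\M)$, which lies in $\comp_\be(\M)$ by Definition~\ref{D:Comp}. The main thing to get right is the bookkeeping of composition parameters: Proposition~\ref{P:trans-comp} only guarantees $\be_{x'}\ge\be_2$, and it is precisely the assumption $\be'\ge\be$ on the outer parameter that keeps every inner composition admissible and closes both inductions.
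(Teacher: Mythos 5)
Your proposal is correct and follows essentially the same route as the paper, which derives the corollary from the associativity statement of Proposition~\ref{P:trans-comp} together with \eqref{N3} and the stratification \eqref{complexity}; you have simply made explicit the double induction (on the outer level in $\comp_\be(\comp_\be(\M))$ and on the level of the base $M$ in $\comp_\be(\M)$) that the paper leaves as "immediate." The bookkeeping of parameters ($\be_x\ge\be_2=\be'\ge\be$) is exactly the point the paper's proposition is designed to supply, and you have used it correctly.
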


As a consequence of Theorem~\ref{T:Mbeta} we obtain the following result.

\begin{corollary}\label{comp}
Let $C\ge1$. Let $E$ be a Banach space and $\cal{M}$ be  a family
of finite metric spaces which are $C$-embeddable in $E$. Let
$\be>2(C+1)$. Then every space in $\comp_\be(\cal{M})$ is
$A$-embeddable in $E$, where ${\disp
A=C+\frac{2C(C+1)}{\be-2(C+1)}}$.

In particular, for any $\e>0$, if ${\disp
\be>2\left(\frac{C+\e}{\e}\right)(C+1)}$ then every space in $\comp_\be(\cal{M})$ is
$(C+\e)$-embeddable in $E$.
\end{corollary}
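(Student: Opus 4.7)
The plan is to induct on the complexity level $m$ from the stratification \eqref{complexity}, namely $\comp_\be(\mathcal{M}) = \bigcup_{m=0}^\infty \mathcal{C}_m$, using Theorem \ref{T:Mbeta} as the inductive engine. The crucial property to exploit is that $A = C + \frac{2C(C+1)}{\be-2(C+1)}$ is a \emph{fixed point} under the distortion bound produced by Theorem \ref{T:Mbeta}, so the distortion does not deteriorate as we iterate compositions.

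First I would record the base case: since $A \ge C$, every space in $\mathcal{C}_0 = \mathcal{M}$ is $C$-embeddable hence $A$-embeddable into $E$ by hypothesis.

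For the inductive step, suppose every space in $\bigcup_{i=0}^{m-1} \mathcal{C}_i$ is $A$-embeddable in $E$. By definition, a space in $\mathcal{C}_m$ has the form $M_{\be'}[\N]$ with $M \in \mathcal{M}$, $\be' \ge \be > 2(C+1)$, and each $N_x$ isometric to a space in $\bigcup_{i=0}^{m-1}\mathcal{C}_i$, hence $A$-embeddable in $E$. Apply Theorem \ref{T:Mbeta} with the given constant $C$ for $M$ and with $D := A$ for the family $\N$: we get that $M_{\be'}[\N]$ is $A'$-embeddable in $E$ with
\[
A' = \max\!\left(A,\; C + \frac{2C(C+1)}{\be' - 2(C+1)}\right).
\]
Since $\be' \ge \be$, the second argument satisfies
\[
C + \frac{2C(C+1)}{\be' - 2(C+1)} \;\le\; C + \frac{2C(C+1)}{\be - 2(C+1)} \;=\; A,
\]
so $A' = A$ and the induction closes.

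The ``in particular'' clause follows immediately: the arithmetic at the end of the proof of Theorem \ref{T:Mbeta} shows that $\be > 2\bigl(\frac{C+\e}{\e}\bigr)(C+1)$ forces $A < C + \e$, and an $A$-embedding with $A < C + \e$ is a $(C+\e)$-embedding. I do not foresee any substantive obstacle: the entire content is the self-consistency of the constant $A$ under one application of Theorem \ref{T:Mbeta}, which is exactly what makes induction on the complexity level go through.
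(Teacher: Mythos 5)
Your proof is correct and follows essentially the same route as the paper: induction on the complexity level in \eqref{complexity}, applying Theorem~\ref{T:Mbeta} with $D=A$ and using that $A$ is a fixed point of the resulting bound. Your explicit monotonicity remark for $\be'\ge\be$ is a small extra precision the paper leaves implicit, but the argument is the same.
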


\begin{proof}  We prove this by induction on the level of complexity of spaces in $\comp_\be(\cal{M})$.

If $M\in {\cal{C}}_0=\cal{M}$, then by assumption, $M$ is
$C$-embeddable in $E$.

Suppose that for some $m\in\bbN$, every space in
$\bigcup_{i=0}^{m-1}{\cal{C}}_{i}$ is $A$-embeddable in $E$.

Let $X\in {\cal{C}}_m$. Then there exist $M\in\cal{M}$,
$\be'\ge\be$, and ${\N}=\{N_x\}_{x\in M}$, where  for every
$x\in M$ a metric space $N_x$  is isometric to a space in
$\bigcup_{i=0}^{m-1}{\cal{C}}_{i}$, so that $X=M_{\be'}[\N].$
By assumption, $M$ is $C$-embeddable in $E$, and by inductive
hypothesis, every space in ${\N}$ is  $A$-embeddable in $E$.
Thus by Theorem~\ref{T:Mbeta}, $X$ is $B$-embeddable in $E$, where
\[
B=\max\left(A, C+\frac{2C(C+1)}{\be-2(C+1)}\right)=A.
\]
\end{proof}

\subsection{Ultrametrics and hierarchically well-separated trees}\label{S:HST&UM}

An {\em ultrametric} is a metric space $(M,d)$ such
that for every $x,y,z\in M$,
$$
d(x,z)\le \max\{d(x,y),d(y,z)\}.
$$

These spaces appeared in a natural way in the study of $p$-adic
number fields, see \cite{Sch84}. Currently ultrametrics play an
important role in many branches of mathematics, see for example
\cite{BLMN05}, \cite{Hug12}, \cite{MN13}, and references therein.
It is known that ultrametrics have very good embedding properties,
see \cite{Shk04} and its references. In particular, Shkarin
\cite{Shk04} proved that for any finite ultrametric $(M, d)$,
there exists $m = m(M, d) \in\bbN$ such that for any Banach space
$E$ with $\dim E\ge m$ there exists an isometric embedding of $M$
into $E$. In this result $m$ is large and depends on $M$, not only
on the cardinality of $M$. Observe that any isometric embedding of
an equilateral space with $n$ points (it is a simplest
ultrametric) into a Euclidean space requires dimension $\ge n-1$.
So isometric embeddings of ultrametrics require large dimension.
The situation changes if we allow some distortion: Bartal, Linial,
Mendel and Naor \cite{BLMN04} proved that  there exist constants
$C\ge 1$ and $c>0$ such that any $n$-point ultrametric $C$-embeds
  into $\ell_p^{k}$, for any $k\ge
c\ln n$ and any $1\le p\le \infty$. The goal of this section
is to prove that there exists a universal constant $K$ such that
any $n$-point ultrametric embeds into any Banach space of
dimension $\log_2 n$ with distortion $\le K$.

It turns out that  for embeddability of ultrametrics it is
convenient to use the following, more restricted class of metrics.

\begin{definition}[\cite{Bar96}]\label{D:HST}
For $k\geq 1$, a $k$-\emph{hierarchically well-separated tree}
($k$-HST) is a metric space whose elements are the leaves of a
rooted tree $T$. To each vertex $u\in T$ there is associated a
label $\Delta(u) \ge 0$ such that $\Delta(u)=0$ if and only if $u$
is a leaf of $T$. It is required that if a vertex $u$ is a child
of a vertex $v$ then $\Delta(u)\leq \Delta(v)/k$. The distance
between two leaves $x,y\in T$ is defined as
$\Delta(\hbox{lca}(x,y))$, where $\hbox{lca}(x,y)$ is the least
common ancestor of $x$ and $y$ in $T$. A $k$-HST is said to be
\emph{exact} if $\Delta(u)=\Delta(v)/k$ for every two internal
vertices (that is, neither $u$ nor $v $ is a leaf) where $u$ is a
child of $v$.
\end{definition}

First, note that an ultrametric on a finite set and a (finite)
$1$-HST are identical concepts. Any $k$-HST is also a $1$-HST,
i.e., an ultrametric. When we discuss $k$-HST's, we freely use the
tree $T$ as in Definition~\ref{D:HST}, which we refer to as \emph{the tree defining
the} HST. An internal vertex in $T$ with out-degree $1$ is said to
be \emph{degenerate}. If $u$ is nondegenerate, then $\Delta(u)$ is
the diameter of the sub-space induced on the subtree rooted by
$u$. Degenerate nodes do not influence the metric on $T$'s leaves;
hence we may assume that all internal nodes are nondegenerate
(note that this assumption need not hold for \emph{exact}
$k$-HST's).

By \cite[Proposition~3.3]{BLMN05}, the class of $k$-HST
coincides with $\comp_k(EQ)$, where $EQ$ denotes the class of
finite equilateral spaces. Thus,  by
Corollary~\ref{C:ABitMore} and Corollary~\ref{comp},
 that there exists $k_0\ge 2$ so that every $k$-HST, with $k\ge k_0$, admits a bilipschitz embedding into any
Banach space $X$ with $\dim X\ge\log_2D$, where $D$ is the maximal
out-degree of a vertex in the tree defining the $k$-HST, with a
uniformly bounded distortion, which generalizes
\cite[Proposition~3]{BLMN04}.
\medskip

Our next goal is to provide an alternative more direct proof of
this result.

\begin{proposition}\label{P:kHST}
Any $k$-HST with $k> 17$ admits a bilipschitz embedding into any
Banach space $X$ with $\dim X\ge\log_2D$, where $D$ is the maximal
out-degree of a vertex in the tree defining the $k$-HST, with
distortion not exceeding $\displaystyle{\frac{16k}{k-17}}$.
\end{proposition}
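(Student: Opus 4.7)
\medskip
\noindent\textbf{Proof proposal.}
The plan is to construct the embedding recursively on the tree $T$ that defines the $k$-HST, maintaining by induction a \emph{centered} image together with the correct Lipschitz bounds. Concretely, I will build an embedding $\phi: T \to X$ satisfying the three invariants
\begin{align*}
(\mathrm{a})\quad & \|\phi(x)-\phi(y)\|\le d(x,y) \text{ for all leaves } x,y;\\
(\mathrm{b})\quad & \|\phi(x)-\phi(y)\|\ge \tfrac{k-17}{16k}\,d(x,y) \text{ for all leaves } x,y;\\
(\mathrm{c})\quad & \|\phi(x)\|\le \tfrac{\Delta(v)}{2} \text{ for all leaves } x, \text{ where } v \text{ is the root of } T.
\end{align*}
Condition (c) is the key technical strengthening: it halves the radius bound one gets by translating a single point of the image to the origin, and it is exactly what makes the target distortion $\frac{16k}{k-17}$ achievable.

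The base case is a one-leaf tree, where $\phi\equiv 0$ trivially satisfies everything. For the inductive step, let $v$ be the root of $T$ with label $\Delta=\Delta(v)$, and let $u_1,\dots,u_r$ be its children (with $r\le D$). Using Corollary~\ref{C:ABitMore} inside a subspace of $X$ of dimension $\lceil\log_2 D\rceil\le \dim X$, choose unit vectors $e_1,\dots,e_r\in X$ with $\|e_i-e_j\|\ge 1/8$ for $i\ne j$. By the inductive hypothesis applied to each subtree $T_i$ rooted at $u_i$, there are embeddings $\phi_i:T_i\to X$ satisfying (a)--(c) with root label $\Delta(u_i)\le \Delta/k$. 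Define
\[
\phi(x)=\phi_i(x)+\lambda\,\Delta\,e_i\quad \text{for } x\in T_i,\qquad \lambda=\frac{k-1}{2k}.
\]
The choice of $\lambda$ is dictated by the verification below.

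For $x,y$ in the same subtree $T_i$, $\phi(x)-\phi(y)=\phi_i(x)-\phi_i(y)$, so (a) and (b) pass straight through. For $x\in T_i$, $y\in T_j$ with $i\ne j$, we have $d(x,y)=\Delta$, and using (c) inductively ($\|\phi_i(x)\|,\|\phi_j(y)\|\le \Delta(u_i)/2\le \Delta/(2k)$) together with $\|e_i-e_j\|\in[1/8,2]$,
\[
\|\phi(x)-\phi(y)\|\le \tfrac{\Delta}{k}+2\lambda\Delta=\Delta\Bigl(\tfrac{1}{k}+\tfrac{k-1}{k}\Bigr)=\Delta,
\]
\[
\|\phi(x)-\phi(y)\|\ge \tfrac{\lambda\Delta}{8}-\tfrac{\Delta}{k}=\Delta\Bigl(\tfrac{k-1}{16k}-\tfrac{1}{k}\Bigr)=\Delta\cdot\tfrac{k-17}{16k},
\]
which gives (a) and (b) at the current level (the latter is positive precisely when $k>17$). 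Finally, to re-establish (c) at the root,
\[
\|\phi(x)\|\le \|\phi_i(x)\|+\lambda\Delta\le \tfrac{\Delta}{2k}+\tfrac{k-1}{2k}\Delta=\tfrac{\Delta}{2},
\]
closing the induction. The distortion bound follows directly from (a) and (b).

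The main potential obstacle is exactly the centering condition (c): if one only knows $\|\phi_i(x)\|\le \Delta(u_i)$ from translating a single point of the image to $0$, the same calculation yields only the weaker bound $\frac{16k}{k-34}$ and forces $k>34$. The observation that the recursion itself automatically produces images of radius at most $\Delta(\text{root})/2$ about the origin (because $\lambda+\tfrac{1}{2k}=\tfrac12$) is what saves a factor of two and gives the sharper threshold $k>17$.
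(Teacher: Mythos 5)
Your proposal is correct and is essentially the paper's proof recast in inductive form: unrolling your recursion gives $\phi(\ell)=\frac{k-1}{2k}\sum_{t\in[r,\bar\ell]}\Delta(t)\,e_{t\tilde t}$ along the root-to-leaf path, which is exactly the paper's map (separated unit vectors assigned to sibling edges via Corollary~\ref{C:ABitMore}, summed with weights $\Delta(t)$) up to a harmless global scale factor. Your centering invariant (c) is precisely the paper's geometric-series tail bound $\Delta(v)\bigl(\tfrac1k+\tfrac1{k^2}+\cdots\bigr)=\tfrac{\Delta(v)}{k-1}$ in rescaled form, and both analyses give the same distortion $\tfrac{16k}{k-17}$.
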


\begin{proof} Let $v$ be any of the non-leaf vertices of the tree defining the
$k$-HST. The number of edges $E(v)$ contributing to the out-degree
of $v$ is at most $D$. Let $\de=1/8$. By
Corollary~\ref{C:ABitMore} there is an injective map $\varphi_v$
from $E(v)$ to the $\delta$-net on the unit sphere of $X$.
Combining $\varphi_v$ for all non-leaf vertices $v$ we get a (no
longer injective) map $\varphi$ from the edge set of the tree to
the $\delta$-net.

Now we define the map $f$ on the set of leaves into $X$, for a
leaf $\ell$ let
\[f(\ell)=\sum_{t\in[r,\bar\ell]}\Delta(t)\varphi(t\tilde t),
\]
where $r$ is the root, $\bar\ell$ is the last non-leaf on the way
from $r$ to $\ell$, $[r,\bar\ell]$ is the path joining $r$ and
$\bar\ell$ (path is regarded as a set of vertices), $\tilde t$ is
the next after $t$ vertex on the path from $r$ to $\ell$, and
$\Delta(t)$ is the label assigned according to
Definition~\ref{D:HST}.

Let us estimate the distortion. Let $\ell_1$ and $\ell_2$ be two
leaves in the tree, $v$ be their least common ancestor, $v_1$ and
$v_2$ be the first (after $v$) vertices on the paths $[v,\ell_1]$
and $[v,\ell_2]$, respectively. Then (we use the fact that
$\Delta(v)=d(\ell_1,\ell_2)$)
\[\begin{split}||f(\ell_1)&-f(\ell_2)||=\left\|\sum_{t\in[v,\bar\ell_1]}\Delta(t)\varphi(t\tilde
t)-\sum_{t\in[v,\bar\ell_2]}\Delta(t)\varphi(t\tilde
t)\right\|\\
&\ge\Delta(v)||\varphi(vv_1)-\varphi(vv_2)||-
\sum_{t\in[v_1,\bar\ell_1]}\Delta(t)||\varphi(t\tilde
t)||-\sum_{t\in[v_2,\bar\ell_2]}\Delta(t)||\varphi(t\tilde t)||\\
&\ge\frac1{8}\Delta(v)-2\Delta(v)\left(\frac1k+\frac1{k^2}+\frac1{k^3}+\dots\right)\\
&=d(\ell_1,\ell_2)\left(\frac1{8}-\frac2{k-1}\right)
\end{split}
\]
On the other hand,
\[\begin{split}||f(\ell_1)&-f(\ell_2)||=\left\|\sum_{t\in[v,\bar\ell_1]}\Delta(t)\varphi(t\tilde
t)-\sum_{t\in[v,\bar\ell_2]}\Delta(t)\varphi(t\tilde
t)\right\|\\
&\le\sum_{t\in[v,\bar\ell_1]}\Delta(t)||\varphi(t\tilde
t)||+\sum_{t\in[v,\bar\ell_2]}\Delta(t)||\varphi(t\tilde t)||\\
&\le2\Delta(v)\left(1+\frac1k+\frac1{k^2}+\frac1{k^3}+\dots\right)\\
&=d(\ell_1,\ell_2)\frac{2k}{k-1}.\qedhere
\end{split}
\]
\end{proof}

Since for any $k>1$, any ultrametric is $k$-bilipschitz equivalent
to a $k$-HST (\cite{Bar99}, see also \cite[Lemma 3.5]{BLMN05}), we
obtain the following corollary of Proposition \ref{P:kHST} (which
is an arbitrary-space version of results of \cite{BLMN04} and
\cite{BM04}).

\begin{corollary}\label{C:UM}
Any $n$-point ultrametric  embeds with uniformly bounded
distortion into any Banach space $X$ with $\dim(X)\ge \log_2n$.
\end{corollary}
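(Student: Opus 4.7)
The plan is to derive Corollary~\ref{C:UM} by composing two bilipschitz equivalences: first replace the given ultrametric by a $k$-HST, and then use Proposition~\ref{P:kHST} to embed that $k$-HST into $X$. Concretely, fix once and for all some $k > 17$, say $k = 18$. Given an $n$-point ultrametric $(U,d)$, I invoke the result of Bar99 (cited after Proposition~\ref{P:kHST}) to obtain a $k$-HST $(T, d_T)$ on the same underlying set (or on a set of the same size) together with a bijection $\iota : U \to T$ satisfying
\[
d(x,y) \le d_T(\iota(x), \iota(y)) \le k \, d(x,y) \qquad \text{for all } x, y \in U.
\]

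Next I need to verify that the tree $\mathcal{T}$ defining this $k$-HST has maximum out-degree $D$ small enough to apply Proposition~\ref{P:kHST} in the ambient space $X$. Since $T$ has $n$ leaves and every out-edge from a vertex $v$ leads (along the subtree rooted at $v$'s child) to a distinct nonempty set of leaves, the out-degree of any internal vertex is at most $n$. Hence $\log_2 D \le \log_2 n \le \dim X$, so the hypothesis of Proposition~\ref{P:kHST} is satisfied. Applying it yields an embedding $f : T \to X$ with $\dist(f) \le \frac{16k}{k-17}$.

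Composing, the map $f \circ \iota : U \to X$ is an embedding of $U$ into $X$ with distortion at most $k \cdot \frac{16k}{k-17}$, which for the fixed choice $k = 18$ gives the explicit universal bound $18 \cdot 288 = 5184$. Independently of the specific numerical value, this distortion depends only on $k$ and is therefore uniform over $n$ and over the choice of Banach space $X$ of dimension $\ge \log_2 n$, which is exactly what the corollary asserts.

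The only real content here is Proposition~\ref{P:kHST}; the rest is bookkeeping. The one place where care is needed is the out-degree estimate: one has to note that the $k$-HST $T$ produced by Bar99 has no more leaves than $U$ (the construction does not introduce new leaves, only relabels distances up to a factor $k$), so that $n$ bounds both the number of leaves and the out-degree of each internal vertex. With that observation in hand, no further obstacle remains.
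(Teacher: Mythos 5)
Your argument is correct and is essentially the paper's own proof: the corollary is derived exactly by combining Bartal's result that every ultrametric is $k$-bilipschitz equivalent to a $k$-HST (for any fixed $k>17$) with Proposition~\ref{P:kHST}, the only detail the paper leaves implicit being your (correct) observation that the defining tree has at most $n$ leaves, hence maximal out-degree $D\le n$ and $\log_2 D\le\log_2 n\le\dim X$.
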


\begin{remark} It is natural to try to achieve distortions
arbitrarily close to $1$ in Proposition~\ref{P:kHST}, provided
that $k$ is sufficiently large and the dimension is a sufficiently
large multiple of $\log_2D$. This is what was done for embeddings
into $\ell_p$ in \cite{BLMN04}, as a consequence of the fact that
$n$-point equilateral sets can be $(1+\e)$-embedded into
$\ell_p^{k}$ with $k\le C(\ep)\ln n$.  By
Corollary~\ref{comp} (or a careful reading of the proof of
Proposition~\ref{P:kHST}) we obtain the same conclusion in every
Banach space that satisfies the condition of
Problem~\ref{P:AlmEQ}, see Section~\ref{S:Equilateral} for a list
of classes of spaces known to satisfy this condition.
\end{remark}

\section{A new example: weighted diamond graphs}\label{S:SnowDiam}

Our basic example is the family of weighted diamonds
$\{W_n\}_{n=0}^\infty$ introduced in \cite{Ost14}. Let us recall
the definitions.

\begin{definition}[\cite{GNRS04}]\label{D:Diamonds}
Diamond graphs $\{D_n\}_{n=0}^\infty$ are defined as follows: The
{\it diamond graph} of level $0$ is denoted $D_0$, it contains two
vertices joined by an edge. The {\it diamond graph} $D_n$ is
obtained from $D_{n-1}$ as follows. Given an edge $uv\in
E(D_{n-1})$, it is replaced by a quadrilateral $u, a, v, b$, with
edges $ua$, $av$, $vb$, $bu$. (See Figure~\ref{F:Diamond2} for a
sketch of $D_2$.)
\end{definition}

%\remove{

\begin{figure}
\begin{center}
{
\begin{tikzpicture}
  [scale=.25,auto=left,every node/.style={circle,draw}]
  \node (n1) at (16,0) {\hbox{~~~}};
  \node (n2) at (5,5)  {\hbox{~~~}};
  \node (n3) at (11,11)  {\hbox{~~~}};
  \node (n4) at (0,16) {\hbox{~~~}};
  \node (n5) at (5,27)  {\hbox{~~~}};
  \node (n6) at (11,21)  {\hbox{~~~}};
  \node (n7) at (16,32) {\hbox{~~~}};
  \node (n8) at (21,21)  {\hbox{~~~}};
  \node (n9) at (27,27)  {\hbox{~~~}};
  \node (n10) at (32,16) {\hbox{~~~}};
  \node (n11) at (21,11)  {\hbox{~~~}};
  \node (n12) at (27,5)  {\hbox{~~~}};

  \foreach \from/\to in {n1/n2,n1/n3,n2/n4,n3/n4,n4/n5,n4/n6,n6/n7,n5/n7,n7/n8,n7/n9,n8/n10,n9/n10,n10/n11,n10/n12,n11/n1,n12/n1}
    \draw (\from) -- (\to);

\end{tikzpicture}
} \caption{Diamond $D_2$.}\label{F:Diamond2}
\end{center}
\end{figure}
%}

\begin{definition}[\cite{Ost14}]\label{D:W_k}
We pick a number $\ep\in\left(0,\frac12\right)$. The sequence
$\{W_n\}_{n=0}^\infty$ of {\it weighted diamonds} is defined in
terms of diamonds $\{D_n\}_{n=0}^\infty$ as follows (see Figure
\ref{F:GraphW} for a sketch of $W_2$):

\begin{itemize}

\item $W_0$ is the same as $D_0$. The only edge of $D_0$ is given
weight $1$.

\item $W_1=D_1\cup W_0$ with edges of $D_1$ given weights
$\left(\frac12+\ep\right)$; weight of the edge of $W_0$ stays as
$1$ (as it was in the first step of the construction).

\item $W_2=D_2\cup W_1$ with edges  of $D_2$ given weights
$\left(\frac12+\ep\right)^2$; weights of the edges of $W_{1}$ stay
as they were in the previous step of the construction.

\item \dots.

\item $W_n=D_n\cup W_{n-1}$ with edges of $D_n$ given weights
$\left(\frac12+\ep\right)^n$; weights of the edges of  $W_{n-1}$
stay as they were in the previous step of the construction.

\item Graphs $\{W_n\}$ are endowed with their shortest path
distances which we denote $d_W$.

\end{itemize}

\end{definition}

\begin{remark} Observe that the metric of $W_n$ depends on $\ep$
although we do not reflect this fact in our notation.
\end{remark}

Note that $D_k$ has $4^k$ edges and that in each step we introduce
$2$ new vertices of $W_k$ per each edge of $D_{k-1}$. Hence $W_k$,
$k\ge 1$, has $2(4^{k-1}+4^{k-2}+\dots+1)+2$ vertices. Thus
${\disp \frac 124^n\le |W_n|< 4^n}$ for $n\ge 2$, and
\begin{equation}\label{cardwn} 2n-1\le \log_2|W_n|<2n.
\end{equation}

%\remove{

\begin{figure}
\begin{center}
{
\begin{tikzpicture}
  [scale=.25,auto=left,every node/.style={circle,draw}]
  \node (n1) at (16,0) {\hbox{~~~}};
  \node (n2) at (5,5)  {\hbox{~~~}};
  \node (n3) at (11,11)  {\hbox{~~~}};
  \node (n4) at (0,16) {\hbox{~~~}};
  \node (n5) at (5,27)  {\hbox{~~~}};
  \node (n6) at (11,21)  {\hbox{~~~}};
  \node (n7) at (16,32) {\hbox{~~~}};
  \node (n8) at (21,21)  {\hbox{~~~}};
  \node (n9) at (27,27)  {\hbox{~~~}};
  \node (n10) at (32,16) {\hbox{~~~}};
  \node (n11) at (21,11)  {\hbox{~~~}};
  \node (n12) at (27,5)  {\hbox{~~~}};

  \foreach \from/\to in
  {n1/n2,n1/n3,n2/n4,n3/n4,n4/n5,n4/n6,n6/n7,n5/n7,n7/n8,n7/n9,n8/n10,n9/n10,n10/n11,n10/n12,n11/n1,n12/n1,n1/n7,n1/n4,n4/n7,n7/n10,n10/n1}
    \draw (\from) -- (\to);

\end{tikzpicture}
}
\end{center}
\caption{Graph $W_2$. The longest edge has weight $1$, the
shortest edges have weights $\left(\frac12+\ep\right)^2$, the
edges of intermediate length have weights
$\left(\frac12+\ep\right)$.}\label{F:GraphW}
\end{figure}
%}

Using a mixture of $\de$-net arguments (Lemma \ref{L:apart}) and
some ``linear'' manipulations we prove that $W_n$'s admit
bounded-distortion embeddings into all Banach spaces with
dimension bounds which are substantially smaller than the ones
implied by the Dvoretzky-type Theorem \ref{T:DvoM}. Namely,
in Section~\ref{S:EmbW_kAnyBS} we prove the following result
(Corollary~\ref{cor:anyemb-old}).

\begin{corollary}\label{cor:anyemb}
For every $\e\in (1/2,1)$, there exist  constants  $c, C>1$ so that for every $n\ge C$, $W_n$ can be embedded in every Banach
space $X$ with $\dim X\ge\exp(c(\log \log |W_n|)^2)$ with the distortion
bounded from above by a constant which depends only on
$\ep$.
\end{corollary}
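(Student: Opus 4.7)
The plan is to construct the embedding $f_n : V(W_n)\to X$ by recursion on the diamond level. In the base case $f_0$ maps the two vertices of $W_0$ to any two points of $X$ at distance $1$. Given $f_{n-1}$ with controlled distortion, I would extend to $f_n$ by placing, for each edge $e = pq$ of $D_{n-1}$, the two vertices $a_e,b_e$ newly introduced on $e$ at
\[
f_n(a_e) = \tfrac{1}{2}\bigl(f_{n-1}(p)+f_{n-1}(q)\bigr) + \eta_n\varphi_e, \qquad
f_n(b_e) = \tfrac{1}{2}\bigl(f_{n-1}(p)+f_{n-1}(q)\bigr) - \eta_n\varphi_e,
\]
where $\eta_n$ is a magnitude of order $(1/2+\ep)^n$, chosen so that $\|f_n(a_e)-f_n(b_e)\| = 2\eta_n$ matches $d_W(a_e, b_e)$, and $\varphi_e \in S_X$ is a unit vector drawn from a well-separated family.

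The directions $\{\varphi_e\}$ will be taken from a $\de$-separated subset of $S_X$ produced by Lemma~\ref{L:apart} / Corollary~\ref{C:ABitMore}, which yields $2^m$ points at mutual distance $\ge 1/8$ in any $m$-dimensional Banach space. The assignment $e\mapsto\varphi_e$ must respect the diamond structure: edges sharing a vertex in $D_{k-1}$ must receive directions whose pairwise differences are bounded below, so that the new vertices appearing on neighboring edges do not collide in the embedding. A careful accounting---reusing directions only between edges of $W_{n-1}$ whose ancestors are geometrically well separated---shows that the total dimension of $X$ required is at most $\exp\bigl(c(\log\log|W_n|)^2\bigr)$ for a suitable $c = c(\ep)$. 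Using $\log|W_n|\asymp n$, this is exactly the dimension bound in the statement.

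The distortion analysis consists of an upper Lipschitz bound and a lower co-Lipschitz bound. The upper bound is routine: starting from the recursive formula and applying the triangle inequality together with the inductive estimate $\|f_{n-1}(p)-f_{n-1}(q)\|\le K(1/2+\ep)^{n-1}$ gives $\|f_n(u)-f_n(v)\|\le K\cdot d_W(u,v)$ for every pair. The lower bound is the principal obstacle, because in a general Banach space there is no orthogonality with which to decompose $f_n(u)-f_n(v)$ into a ``midpoint contribution'' and an offset contribution. I would handle it by case analysis on how $u,v\in V(W_n)$ sit in the hierarchy: if $u=a_e,v=b_e$, the bound is immediate from $2\eta_n$; if $u,v$ are both new at level $n$ on distinct edges $e\ne e'$, the $\de$-separation of $\varphi_e,\varphi_{e'}$ produces a lower bound of order $\eta_n\de$; if $u,v$ diverge at a coarser level $n_0<n$, the inductive hypothesis applies to appropriate ancestors, and the finer-level offsets are treated as a tail.

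The main technical difficulty lies in the last case: one must verify that the cumulative effect of all offsets $\{\eta_k\varphi_\bullet\}_{k>n_0}$ inserted at finer levels does not spoil the co-Lipschitz estimate from the inductive hypothesis. This is precisely where the condition $\ep>0$ (snowflake factor $1/2+\ep>1/2$) is crucial: the geometric series $\sum_{k>n_0}\eta_k$ is strictly dominated by $\eta_{n_0}$, so deeper perturbations are of strictly smaller order than the level-$n_0$ gap and cannot cancel it. Combining the three cases produces a uniform-in-$n$ distortion $K = K(\ep)$, and together with the direction-counting above this yields the claimed embedding into any $X$ with $\dim X\ge \exp(c(\log\log|W_n|)^2)$.
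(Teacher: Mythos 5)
There is a genuine gap, and it occurs at both of the places you flag as ``the main technical difficulty.'' First, the co-Lipschitz bound: your plan is to run the midpoint-plus-offset recursion directly in an arbitrary Banach space and to rescue the lower bound from the $\de$-separation of the directions together with the claim that the tail $\sum_{k>n_0}\eta_k$ is strictly dominated by $\eta_{n_0}$. With $\eta_k\asymp(\tfrac12+\ep)^k$ and ratio $r=\tfrac12+\ep>\tfrac12$ the tail equals $\eta_{n_0}\cdot\frac{r}{1-r}>\eta_{n_0}$, so the deeper perturbations are \emph{not} of smaller order than the level-$n_0$ gap, and in a general norm nothing prevents them from cancelling it; likewise, for two new vertices on distinct edges the difference of the two midpoints (of size comparable to the parent scale) can cancel against $\eta_n(\varphi_e-\varphi_{e'})$, so separation on the sphere alone gives no lower bound of order $\eta_n\de$. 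This is exactly why the paper does not embed into an arbitrary space directly: in Theorem~\ref{T:EmbAny} the target is assumed to have a Schauder basis, the level-$m$ offsets are placed in disjoint basis blocks $Y_m$, and the lower bound is extracted by the bounded basis projections onto a single block $Y_t$ (or $Y_{t-1}$ in the delicate case), with the combinatorial input of Lemma~\ref{C:le2} controlling the coefficients (Lemma~\ref{L:Try}); no tail-domination estimate of the kind you invoke is available or used.

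Second, the dimension bound $\exp(c(\log\log|W_n|)^2)$ does not come from counting separated directions at all, so your ``careful accounting'' cannot produce it. Direction counting via Lemma~\ref{L:apart}/Corollary~\ref{C:ABitMore} needs only about $m$ dimensions to house $4^m$ directions, so the block construction needs dimension of order $(\log_2|W_n|)^2\approx 2n^2$ --- that is the hypothesis of Theorem~\ref{T:EmbAny}. The exotic bound in the corollary arises from the second, missing ingredient: to run a block/basis argument inside an \emph{arbitrary} finite-dimensional space one must first locate a subspace with uniformly bounded basis constant, and the paper does this with the Szarek--Tomczak-Jaegermann theorem (Theorem~\ref{SzTJ}), which inside an $N$-dimensional space only guarantees such a subspace of dimension about $\exp(\tfrac12\sqrt{\ln N})$; solving $\exp(\tfrac12\sqrt{\ln N})\gtrsim n^2$ is what forces $N\ge\exp(c(\log\log|W_n|)^2)$. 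Your proposal never passes through any such structural reduction, so even if the recursive construction were repaired, the stated dimension bound would remain unjustified.
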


Before proving Corollary~\ref{cor:anyemb} we study the structure of the weighted diamonds $W_n$. We show that they are snowflaked versions of standard diamonds $D_n$  and that  $W_n$'s are not included in the set of examples
presented in Section~\ref{S:Equi&Eucl&Mix}.

\subsection{Weighted diamonds are bilipschitz-equivalent to
snowflaked dia\-monds}

The following definition is standard (see \cite[p.~98]{Hei01}):

\begin{definition}\label{D:Snowflake}
Let $(X,d_X)$ be a metric space and $0<\alpha<1$. The space $X$
endowed with a modified metric $(d_X(u,v))^\alpha$ is called a
{\it snowflake} of $(X,d_X)$. We also say that $(X,d_X^\alpha)$
is an {\it $\al$-snowflaked version} of $(X,d_X)$.
\end{definition}

One of the standard metrics on diamonds $\{D_k\}_{k=1}^\infty$ is
the shortest path distance obtained under the assumption that each
edge in $D_k$ has length $\left(\frac12\right)^k$. Let us denote
this metric $d_{D_k}$.

\begin{proposition} \label{P:snowflake}
For any $\e\in (0,1/2)$ there exists  $\al\in (0,1)$ so that the natural identity bijections of (vertex sets of) weighted diamonds $\{W_k\}$
onto (vertex sets of) $\al$-snowflaked versions of diamonds
$\{(D_k,d_{D_k})\}$ have uniformly  bounded distortions for all $k\in \bbN$.
\end{proposition}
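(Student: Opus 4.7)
The plan is to choose $\alpha\in(0,1)$ by $2^{-\alpha}=\frac12+\ep$, i.e.\ $\alpha=-\log_2(\frac12+\ep)$, which lies in $(0,1)$ since $\ep\in(0,1/2)$, and to prove the bilipschitz equivalence of the identity bijection (using $V(W_k)=V(D_k)$) with constants independent of $k$. The whole argument rests on one compatibility observation: if $uv$ is the edge of $W_k$ introduced at step $j$ of the construction (so $uv\in E(D_j)$ with $W_k$-weight $(\frac12+\ep)^j$), then $u$ and $v$ are joined in $D_k$ by the zigzag subdivision of that edge, a path of $2^{k-j}$ level-$k$ edges of length $(1/2)^k$ each. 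Hence $d_{D_k}(u,v)=(1/2)^j$ and, after snowflaking, $d_{D_k}^\alpha(u,v)=(\frac12+\ep)^j$, which is \emph{exactly} the $W_k$-weight of the edge $uv$.

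One half of the distortion estimate is then immediate: since $d_{D_k}^\alpha$ is a metric on $V(D_k)=V(W_k)$ and every $W_k$-edge weight equals the $d_{D_k}^\alpha$-distance between its endpoints, the triangle inequality applied along any $W_k$-path gives $d_{D_k}^\alpha(u,v)\le d_{W_k}(u,v)$ for all $u,v$.

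For the reverse estimate $d_{W_k}(u,v)\le C(\ep)\,d_{D_k}^\alpha(u,v)$, I would start from a shortest $D_k$-geodesic $u=v_0,v_1,\ldots,v_m=v$ (so $d_{D_k}^\alpha(u,v)=m^\alpha(\frac12+\ep)^k$) and iteratively replace it by cheaper paths in $W_k$ built from higher-level shortcut edges. The key structural fact is that every level-$k$ edge of $D_k$ joins a newly added level-$k$ vertex (of $D_k$-degree $2$) to a vertex of strictly smaller level; consequently, along the geodesic the vertex levels alternate, and whenever $v_i$ is an interior level-$k$ vertex, its two neighbours $v_{i-1},v_{i+1}$ are by construction the endpoints of the level-$(k-1)$ edge that was subdivided to create $v_i$, so $v_{i-1}v_{i+1}$ is itself a level-$(k-1)$ edge of $W_k$ of weight $(\frac12+\ep)^{k-1}$. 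Substituting each pair $(v_{i-1}v_i,\,v_iv_{i+1})$ by the shortcut $v_{i-1}v_{i+1}$ yields a new path with roughly $m/2$ edges at level $k-1$; iterating the construction $r$ times gives $\approx m/2^r$ edges at level $k-r$ with principal weight contribution $(m/2^r)(\frac12+\ep)^{k-r}$, and stopping at $r=\lfloor\log_2 m\rfloor$ telescopes this principal term to $m^\alpha(\frac12+\ep)^k=d_{D_k}^\alpha(u,v)$ via the defining relation $\frac12+\ep=2^{-\alpha}$.

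The main technical obstacle I anticipate is the bookkeeping for the residual ``tails'' left behind in each round whenever an endpoint (or near-endpoint) of the current path is itself at the current peak level and therefore cannot be substituted. Such residuals contribute a bounded number of edges per round of weight $(\frac12+\ep)^{k-r+1}$, and the total $\sum_{r=1}^{R}(\frac12+\ep)^{k-r+1}$ is a geometric series with common ratio $\frac12+\ep<1$, bounded by $(\frac12+\ep)^{k-R+1}/(\frac12-\ep)$; for $R\approx\log_2 m$ this equals $O(d_{D_k}^\alpha(u,v))$ by the same telescoping identity. Combining with the main term yields the uniform upper bound $d_{W_k}(u,v)\le C(\ep)\,d_{D_k}^\alpha(u,v)$ with $C(\ep)$ depending only on $\ep\in(0,1/2)$, which is the conclusion of the proposition.
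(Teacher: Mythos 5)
Your proposal is correct, and the two halves split exactly as in the paper for the easy direction: both you and the authors choose $\alpha$ by $(1/2)^\alpha=\frac12+\ep$, observe that every edge of $W_k$ has weight equal to (at least) the $d_{D_k}^\alpha$-distance of its endpoints, and conclude $(d_{D_k}(x,y))^\alpha\le d_{W_k}(x,y)$ from the triangle inequality for the snowflaked metric. For the reverse inequality, however, your route is genuinely different. The paper takes a shortest $x y$-path in $W_k$, invokes Lemma~\ref{C:le2} (each edge length occurs at most twice on a $W_k$-geodesic) to bound $d_{W_k}(x,y)$ by a geometric series in terms of its longest edge $uv$, and then proves the geometric claim $d_{D_k}(x,y)\ge\frac12 d_{D_k}(u,v)$ via a case analysis on the position of $x,y$ relative to the subdiamond with diagonal $uv$ (the $m_0$ argument). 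You instead never analyze $W_k$-geodesics at all: starting from a $D_k$-geodesic you build an explicit competitor path in $W_k$ by repeatedly replacing the two level-$j$ edges around each interior level-$j$ vertex by its parent level-$(j-1)$ edge, and you control the cost by the telescoping identity $\frac12+\ep=2^{-\alpha}$ plus a geometric series for the $O(1)$ residual end-edges per round. Your key structural facts are correct (every $D_j$-edge joins a new degree-$2$ vertex to an older one, and the two $D_j$-neighbours of a new vertex span its parent edge, which lies in $W_k$ with weight $(\frac12+\ep)^{j-1}$), and the stated stopping rule $r\approx\log_2 m$ gives the bound with a constant depending only on $\ep$, uniformly in $k$. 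What each approach buys: yours is more constructive and self-contained, avoiding both Lemma~\ref{C:le2} and the subdiamond case analysis; the paper's argument, while more delicate here, reuses machinery (the structure of shortest paths in $W_k$ and the subdiamond geometry) that is needed anyway in the embedding proofs later in the paper, so little is saved globally by the alternative. The remaining work in your sketch is only the acknowledged bookkeeping (at most two residual edges per round, walk length roughly halving each round, and the $m=O(1)$ terminal case), all of which goes through.
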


For the proof we need the following fact about the structure  of shortest paths in $W_n$, which was proved in \cite[Claim 4.1]{Ost14}.

\begin{lemma}[{\cite[Claim 4.1]{Ost14}}]\label{C:le2} A shortest path between two vertices in $W_n$ can contain edges of each possible
length: \[1,\left(\frac12+\ep\right), \left(\frac12+\ep\right)^2,
\left(\frac12+\ep\right)^3, \dots\] at most twice. Actually for
$1$ this can happen only once because there is only one such edge.
If there are two longest edges, they are adjacent.
\end{lemma}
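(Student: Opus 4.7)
The plan is induction on $n$, exploiting the nested structure $W_{n-1}\subset W_n$ and a detour-replacement argument that uses $\ep>0$.

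First I would establish the auxiliary claim: for any two vertices $u,v\in V(W_{n-1})$, every shortest path in $W_n$ between $u$ and $v$ lies entirely inside $W_{n-1}$. The proof proceeds by local replacement. Each vertex $a\in V(D_n)\setminus V(D_{n-1})$ is a middle vertex of some quadrilateral $\{x,a,y,b\}$ that replaced an edge $xy\in E(D_{n-1})$ in the construction of $D_n$, and its only neighbors in $W_n$ are $x$ and $y$, each joined to $a$ by a $D_n$-edge of weight $(1/2+\ep)^n$. If a shortest path $P$ visited such an $a$ as an internal vertex, it would traverse $x\to a\to y$, of weight $2(1/2+\ep)^n=(1+2\ep)(1/2+\ep)^{n-1}$, which is strictly larger than the weight $(1/2+\ep)^{n-1}$ of the direct $D_{n-1}$-edge $xy\in E(W_n)$. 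Replacing the detour by this edge would strictly shorten $P$, contradicting optimality. Hence $P$ avoids all new $D_n$-vertices, uses no $D_n$-edges, and sits inside $W_{n-1}$.

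For the induction itself, the base $n=0$ is immediate. For the step, let $P$ be a shortest $W_n$-path from $u$ to $v$. If $u,v\in V(W_{n-1})$, the auxiliary claim places $P$ in $W_{n-1}$ and the inductive hypothesis applies. Otherwise, each endpoint that is a new $D_n$-vertex has degree $2$ in $W_n$ with both neighbors in $V(W_{n-1})$, so $P$ must begin with a $D_n$-edge $u\to u'$ when $u$ is new, and end with a $D_n$-edge $v'\to v$ when $v$ is new. The middle portion of $P$ connects two vertices of $V(W_{n-1})$, is a subpath of a shortest $W_n$-path, and is therefore by the auxiliary claim a shortest $W_{n-1}$-path; the inductive hypothesis applies to it. Hence $P$ uses at most two edges of each length $(1/2+\ep)^k$ with $k\le n-1$ (from the middle) and at most two edges of length $(1/2+\ep)^n$ (the terminal $D_n$-edges); the length $1$ edge lies in $W_{n-1}$ and so appears at most once by induction. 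For the adjacency clause: if the middle portion is nonempty, its edges all have weight $\ge(1/2+\ep)^{n-1}>(1/2+\ep)^n$, so the longest edges of $P$ lie in the middle and inherit adjacency from the induction; if the middle portion is empty (forcing $u'=v'$), then $P$ consists of the two terminal $D_n$-edges joined at $u'=v'$, which are trivially adjacent.

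The only step requiring care is the strict inequality $2(1/2+\ep)^n>(1/2+\ep)^{n-1}$, i.e., $1+2\ep>1$, which is precisely where the standing assumption $\ep\in(0,1/2)$ in Definition~\ref{D:W_k} enters: at $\ep=0$ the detour-replacement would only tie, and the auxiliary claim would fail. Once this inequality is established, the three-way case analysis on the endpoints of $P$ (both in $V(W_{n-1})$; one new; both new) decomposes $P$ into a $W_{n-1}$-shortest middle segment plus at most two appended $D_n$-edges, and the induction closes without further issue.
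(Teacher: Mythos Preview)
Your proof is correct and takes a genuinely different route from the paper's. The paper does not induct on $n$; instead it argues directly at the level of the longest edge $e=uv$ in the shortest path: it locates the subdiamond $S$ one scale up (with diagonal of length $(\tfrac12+\ep)^{k-1}$) containing $e$, shows that the portion of the path starting from $v$ cannot exit $S$ through the far end of the diagonal (else the diagonal would give a strictly shorter $u$--$t$ route), deduces that this portion uses only strictly shorter edges, and then iterates the argument edge by edge to obtain a strictly decreasing sequence of edge-lengths on each side of the longest edge(s). Your approach instead runs a global induction on $n$ via the auxiliary claim that shortest $W_n$-paths between $W_{n-1}$-vertices stay inside $W_{n-1}$, reducing every shortest path to at most two terminal $D_n$-edges wrapped around a shortest $W_{n-1}$-path. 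The same inequality $2(\tfrac12+\ep)^n>(\tfrac12+\ep)^{n-1}$ drives both arguments, but you isolate it into a single clean replacement lemma rather than invoking it repeatedly inside the subdiamond picture. Your version is more modular and arguably simpler; the paper's version has the advantage that the subdiamond language it sets up is reused heavily in the subsequent distortion estimates (Proposition~\ref{P:snowflake}, Theorem~\ref{T:EmbAny}) and carries over verbatim to the corals of Section~\ref{S:coral}.
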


\begin{proof} (This proof is a slightly modified version of the proof in \cite{Ost14};  we include it here  for the convenience of the readers.) Let $e$ be one of the longest edges
in the path and $\left(\frac12+\ep\right)^k$ be its length.  We
assume that   $k\ge 1$, the case  $k=0$ can be considered on the
same lines, it is even easier.

As for
diamonds, we define {\it weighted subdiamonds} to be subsets of $W_n$ which evolved from
an edge (as sets of vertices they coincide with the subdiamonds
defined in \cite{JS09,Ost14}). The edge from which a subdiamond
evolved is called its {\it diagonal}.

Consider the subdiamond $S$ containing $e$ with diagonal of length
$\left(\frac12+\ep\right)^{k-1}$ . Let $e=uv$. Without loss of
generality we may assume that $u$ is one of the ends of the
diagonal of $S$, denote the other end by $t$.
%\medskip

The rest of the path consists of two pieces, starting at $u$ and
$v$, respectively. We claim that the part which starts at $v$ can
never leave $S$. It obviously cannot leave through $u$. It cannot
leave through $t$, because otherwise the piece of the path between
$u$ and $t$ could be replaced by the diagonal of $S$, which is
strictly shorter.%\medskip

This implies that the part of the path in $S$ which starts at $v$
can contain edges only shorter than $\left(\frac12+\ep\right)^k$.
For the next edge in this part of the path we can repeat the
argument and get, by induction, that lengths of edges in the
remainder of the path in $S$ are strictly
decreasing.%\medskip

The part of the path which starts at $u$ can be considered
similarly. The last statement of the Lemma is immediate from the
proof.
\end{proof}

\begin{proof}[Proof of  Proposition~\ref{P:snowflake}] Let $\e\in (0,1/2)$ and let $\left(\frac12+\ep\right)$ be the weight of
edges of $W_1$ which are not in $W_0$. Pick $\alpha\in(0,1)$ so
that $\left(\frac12\right)^\alpha=\left(\frac12+\ep\right)$. Since
every edge of $D_k$ has length $\left(\frac12\right)^k$,  if we
raise its length to the power $\alpha$, we get the length of the
same edge in $W_k$. Therefore (since $d_{D_k}^\alpha$ satisfies
the triangle inequality) for any two vertices $x,y$ of $D_k$ (or
$W_k$) we have
$$(d_{D_k}(x,y))^\alpha\le d_{W_k}(x,y).$$

To get the inequality in the other direction, let $uv$ be the one
of the (at most two) longest edges in the shortest $xy$-path in
$W_k$. We claim that
\begin{equation}\label{claimDk}
d_{D_k}(x,y)\ge \frac12
d_{D_k}(u,v)
\end{equation}

If \eqref{claimDk} is satisfied, then by Lemma~\ref{C:le2} and
  since $uv$ is an edge we have
\[\begin{split}d_{W_k}(x,y)&\le
2d_{W_k}(u,v)\left(1+\left(\frac12+\ep\right)+\left(\frac12+\ep\right)^2+\dots\right)\\
&=\frac{2d_{W_k}(u,v)}{\frac12-\ep}=\frac{2}{\frac12-\ep}(d_{D_k}(u,v))^\al\le \frac{2^{\al+1}}{\frac12-\ep}(d_{D_k}(x,y))^\al\\
&=\frac{8}{1-4\e^2}(d_{D_k}(x,y))^\al.\end{split}
\]

We assume without loss of generality that the shortest $xy$-path
visits $u$ before $v$. To prove \eqref{claimDk}  we consider three
possible cases.

\begin{enumerate}

\item Both $x$ and $y$ are contained in the subdiamond $S$ with
diagonal $uv$.

\item Exactly one of $x$, $y$  is contained in the subdiamond $S$
with the diagonal $uv$.

\item None of $x$, $y$  is contained in the subdiamond $S$ with
the diagonal $uv$.

\end{enumerate}

Let $m_0\in \mathbb{N}$ be the smallest number such that
\begin{equation*}%\label{m0}
 \left(\frac12+\ep\right)+\left(\frac12+\ep\right)^2+\dots+\left(\frac12+
\ep\right)^{m_0}\ge
1+\left(\frac12+\ep\right)^{m_0}.
\end{equation*}
It is clear that $m_0\ge 3$ if $\ep\in(0,\frac12)$.

\begin{figure}[h]
\centering
\includegraphics{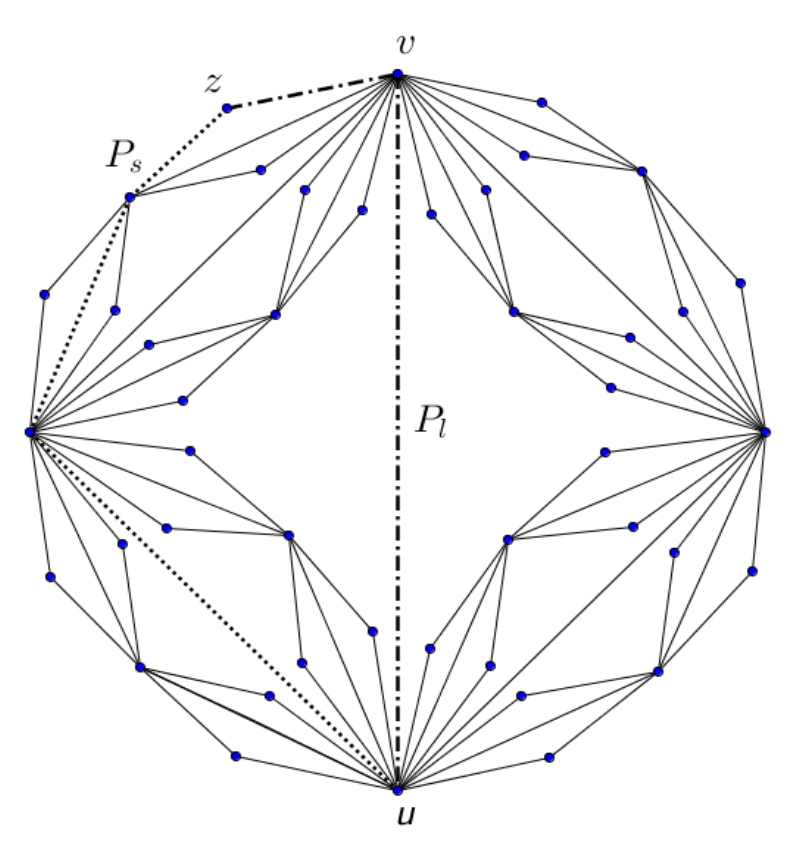}
\caption{Paths $P_s$ and $P_l$ in subdiamond
$S$}\label{F:SubdiamondS}
\end{figure}

We show that in Case 1 $y$ is contained in one of the subdiamonds
of $S$ whose diagonal has length
$\le\left(\frac12+\ep\right)^{m_0-1}d_{W_k}(u,v)$, and $v$ is one
of its ends. In fact, otherwise the $vy$-path in $W_k$, which is a
part of the shortest $xy$-path which we consider, contains a
vertex $z$ of the subdiamond $S$ for which $zv$ is an edge of
$W_k$ satisfying
\[d_{W_k}(z,v)=\left(\frac12+\ep\right)^{t}d_{W_k}(u,v)\]
for some positive integer $t\le m_0-1$.  But then there is a
$uz$-path $P_s$ in $W_k$ which is strictly shorter than any
$uz$-path $P_l$ through $v$, contrary to our assumption that one
of the shortest $xy$-paths passes through $u$ and $v$. To see this
we observe that we can pick $P_s$ (see Figure \ref{F:SubdiamondS})
of length
\[\left(\left(\frac12+\ep\right)+\left(\frac12+\ep\right)^2+\dots+\left(\frac12+\ep\right)^{t}\right)d_{W_k}(u,v),\]
and that $P_l$ is of length at least
\[\left(1+\left(\frac12+\ep\right)^{t}\right)d_{W_k}(u,v),\]
the conclusion $\hbox{length}(P_s)<\hbox{length}(P_l)$ follows
from $t\le m_0-1$ and the definition of $m_0$.

The statement that  $y$ is contained in one of the subdiamonds of
$S$ whose diagonal has length
$\le\left(\frac12+\ep\right)^{m_0-1}d_{W_k}(u,v)$, and $v$ is one
of its ends implies that
$d_{D_k}(y,v)\le\left(\frac12\right)^{m_0-1}d_{D_k}(u,v)\le\frac14d_{D_k}(u,v)$
since $m_0\ge 3$. Similarly we prove that $d_{D_k}(x,u)\le\frac14
d_{D_k}(u,v)$. We conclude that
\[d_{D_k}(x,y)\ge d_{D_k}(u,v)-d_{D_k}(y,v)-d_{D_k}(x,u)\ge
\left(1-\frac24\right)d_{D_k}(u,v)=\frac12d_{D_k}(u,v),\] so the
inequality \eqref{claimDk} is satisfied when Case 1 holds.

In Cases 2 and 3 we consider the subdiamond of $W_k$ with the
diagonal of the length $\left(\frac12+\ep\right)^{m-1}$ (where
$\left(\frac12+\ep\right)^{m}$ is the length of $uv$ in $W_k$). We
may assume without loss of generality that the diagonal of this
subdiamond is of the form $uw$. We denote by $\widetilde v$ the
other vertex for which both $\widetilde vu$ and $\widetilde vw$
have length $\left(\frac12+\ep\right)^{m}$.

Because of lack of symmetry we consider separately Case 2a, where
$x$ is and $y$ is not in the subdiamond $S$ with the diagonal
$uv$, and Case 2b, where $y$ is and $x$ is not in the subdiamond
$S$ with the diagonal $uv$.

In Case 2a the vertex $y$ is contained in the subdiamond with the
diagonal $vw$ (because the edge $uw$ is shorter than any other
$uw$-path). Also $d_{D_k}(x,u)\le\frac14d_{D_k}(u,v)$ for the same
reason as in Case 1. Hence
$d_{D_k}(x,y)>\frac34d_{D_k}(u,v)$.

In Case 2b there are two subcases: where $x$ is in the subdiamond
with the diagonal $uw$, and where $x$ is not there. In both cases
it is easy to see that the shortest in $D_k$ $xy$-path cannot be
shorter than $d_{D_k}(u,y)\ge \frac34d_{D_k}(u,v)$.

In Case 3 the only situation in which the desired inequality is
not immediate is the situation where $x$ is in the subdiamond with
the diagonal $\widetilde vw$ and $y$ is in the subdiamond with the
diagonal $vw$. In this case the shortest path contains both the
edge $\widetilde vu$ and $uv$. Replacing these edges by the edges
$\widetilde vw$ and $wv$, we get   one of the previously
considered cases.
\end{proof}

In this context it is natural to recall the following
well-known result of Assouad \cite{Ass83} (see also \cite[Chapter
12]{Hei01}).

\begin{definition}\label{D:Doubl} A metric space is called {\it
doubling} if there exists $N<\infty$ such that each ball in this
space can be covered by at most $N$ balls of twice smaller radius.
\end{definition}

\begin{theorem}[\cite{Ass83}]\label{T:Assouad} Each snowflaked version of a
doubling metric space admits a bilipschitz embedding into a
Euclidean space.
\end{theorem}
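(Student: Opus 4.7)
The plan is to prove Assouad's theorem by a multi-scale net and coloring construction, combined with the snowflake exponent $\al\in(0,1)$ to force summability. Let $(X,d)$ be doubling with constant $N$, fix $\al\in(0,1)$, and I would construct a bilipschitz embedding $\Phi:(X,d^\al)\to \ell_2^D$ for some $D=D(N,\al)$.

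First, for each integer $k\in\mathbb{Z}$ I would choose a maximal $2^{-k}$-separated subset $\mathcal{N}_k\subseteq X$. Iterating the doubling property, any ball of radius $R$ meets $\mathcal{N}_k$ in at most $N^{O(\log_+(R\cdot 2^k))}$ points, and a greedy argument then produces a partition of $\mathcal{N}_k$ into $M=M(N,A)$ color classes $\mathcal{N}_k^{(1)},\dots,\mathcal{N}_k^{(M)}$ such that any two distinct points in the same class are at distance at least $A\cdot 2^{-k}$, for a constant $A$ to be fixed (large) depending only on $N$. Define a Lipschitz bump function at scale $k$ and color $j$ by
\[
\phi_{k,j}(x)=2^{-k\al}\max\!\left(0,\,1-A^{-1}\cdot 2^{k}\,d(x,\mathcal{N}_k^{(j)})\right),
\]
so that $\phi_{k,j}$ is supported in an $A\cdot 2^{-k}$-neighborhood of $\mathcal{N}_k^{(j)}$, has Lipschitz constant at most $A^{-1}\cdot 2^{k(1-\al)}$, and sup-norm at most $2^{-k\al}$. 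Since within each color class supports are pairwise disjoint, at most one net point contributes to $\phi_{k,j}(x)$ for any given $x$.

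Assemble these into $\Phi(x)=(\phi_{k,j}(x))_{k,j}\in\ell_2$. For $x\ne y$ let $k_0$ satisfy $2^{-k_0}\le d(x,y)<2^{-k_0+1}$. Using the Lipschitz bound on $\phi_{k,j}$ for $k\ge k_0$ and the sup-norm bound for $k<k_0$, the contributions from the two tails form geometric series in $2^{\al-1}$ and $2^{-\al}$, both convergent because $0<\al<1$; this yields $\|\Phi(x)-\Phi(y)\|_2\le C_1 d(x,y)^\al$. For the lower bound, pick a net point of $\mathcal{N}_{k_0}$ close to $x$; the corresponding color $j$ satisfies $\phi_{k_0,j}(x)\gtrsim 2^{-k_0\al}$, while $\phi_{k_0,j}(y)=0$ provided $A$ was taken large enough compared to $d(x,y)\cdot 2^{k_0}$, yielding $\|\Phi(x)-\Phi(y)\|_2\ge C_2 d(x,y)^\al$.

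The main obstacle is cutting $\ell_2$ down to a Euclidean space of finite dimension $D=D(N,\al)$. The standard device is to encode the scale index $k$ only modulo a period $T=T(\al)$, packaging the pair $(k,j)$ as $(k\bmod T,j)$ so that $\Phi$ lands in $\R^{TM}$. Now distinct scales congruent modulo $T$ may collide and interfere, but the prefactor $2^{-k\al}$ makes their contributions form a geometric series whose ratio can be made arbitrarily small by choosing $T$ large, depending on $\al$. Verifying that after this collapse one still has two-sided control of $\|\Phi(x)-\Phi(y)\|$ in terms of $d(x,y)^\al$ is the delicate step, but it reduces to the same tail estimates used for the upper bound above, now applied to potential cancellations across far-apart scales that share a coordinate.
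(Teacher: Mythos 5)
The paper itself gives no proof of this theorem: it is quoted from Assouad's paper [Ass83] (cf.\ Heinonen's book), so you are reconstructing the classical argument, and your outline is indeed the standard one (maximal $2^{-k}$-nets at every scale, a bounded coloring obtained from the doubling property, weighted tent functions with weight $2^{-k\alpha}$, and reuse of coordinates for scales congruent mod a large period $T$ to force finite dimension). However, as written the crucial lower-bound step fails because you have tied the same-color separation and the bump support radius to the \emph{same} parameter $A2^{-k}$. First, two same-colored net points at distance exactly $A2^{-k}$ have overlapping supports, so the claimed disjointness is not literally true. More seriously, with $2^{-k_0}\le d(x,y)<2^{-k_0+1}$ the net point $p\in\mathcal{N}_{k_0}$ with $d(x,p)<2^{-k_0}$ may be arbitrarily close to $y$ (take $d(x,y)$ just above $2^{-k_0}$ and $d(x,p)$ just below it), so $\phi_{k_0,j}(y)$ need not vanish and can be nearly equal to $\phi_{k_0,j}(x)$; and even if $p$ is far from $y$, another net point of the same color only needs to be at distance $A2^{-k_0}$ from $p$, hence can lie inside the support radius $A2^{-k_0}$ around $y$. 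Note also that enlarging $A$ works \emph{against} you in your normalization: it widens the supports (as $A\to\infty$ the bump tends to the constant $2^{-k_0\alpha}$ on large balls), so ``taking $A$ large enough'' cannot rescue the claim $\phi_{k_0,j}(y)=0$.

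The repair is standard and is exactly how Assouad/Heinonen set the parameters: make the bump supported in a ball of radius a \emph{small fixed} multiple of the net scale (say $2\cdot 2^{-k}$), keep the same-color separation at a \emph{much larger} multiple $A2^{-k}$, and run the lower bound at a scale a few dyadic steps finer than $d(x,y)$ (e.g.\ $4\cdot 2^{-k}\le d(x,y)<8\cdot 2^{-k}$), so that the chosen net point is within $2^{-k}$ of $x$ while every same-colored net point is at distance at least $2\cdot2^{-k}$ from $y$; then the coordinate difference is of order $2^{-k\alpha}$, comparable to $d(x,y)^{\alpha}$. Two smaller points: in the upper bound you have the regimes swapped (the Lipschitz bound $2^{k(1-\alpha)}d(x,y)$ is the useful one for the coarse scales $k<k_0$, the sup bound $2^{-k\alpha}$ for the fine scales $k\ge k_0$ --- the geometric ratios you quote correspond to the correct assignment); and the reduction to dimension $D(N,\alpha)$ by collapsing $k$ modulo $T$ is precisely the delicate part of the theorem, which you only assert: one must check that the dominant coordinate at the working scale survives the collisions, which holds because the colliding scales contribute at most a factor $O\!\left(2^{-T\alpha}+2^{-T(1-\alpha)}\right)$ of the main term, made small by choosing $T=T(\alpha)$ large.
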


\begin{remark}\label{dimension}
In the original proof of Theorem~\ref{T:Assouad} the dimension $N$ of the receiving Euclidean space and the distortion of the embedding depend both on the doubling constant of the  metric space and on the amount $\al$ of snowflaking, with $N$ going to $\infty$, as $\al$ approaches 1. Recently, Naor and Neiman \cite{NN12} (cf. also \cite{DS13}) obtained estimates of $N$ depending only on the doubling constant and  independent of $\al$, for $\al\in (1/2,1)$.
\end{remark}

\begin{remark}\label{R:Ass83&Ost14}
 The spaces $\{W_k\}$ do not satisfy the assumptions of Theorem~\ref{T:Assouad}, i.e.
the spaces
$\{D_k\}_{k=1}^\infty$ are not uniformly doubling. Indeed,
  balls of radius $\left(\frac12\right)^k$ centered at the
bottom of $D_k$  contain
$2^k$ vertices of mutual distance $\left(\frac12\right)^{k-1}$, in addition to the bottom vertex, and thus  no pair
of such vertices is contained in any ball of radius
$\left(\frac12\right)^{k+1}$.

 Note that the doubling condition is important for
$\ell_2$-embeddability in Theorem~\ref{T:Assouad}. Consider, for
example, the space $L_1(0,1)$ and
$\alpha\in\left(\frac12,1\right)$, and apply \cite[Lemma
2.1]{AB14}. Thus the results about embeddings of $\{W_k\}$ in
\cite{Ost14} and in the present paper are not covered by the
Assouad theorem (Theorem \ref{T:Assouad}).
\end{remark}

\subsection{Weighted diamonds are not included in the set of examples
presented in Section \ref{S:Equi&Eucl&Mix}}\label{S:Different}

The goal of this section is to show that the bilipschitz
embeddability of $W_n$'s into an arbitrary Banach space of
dimension $\exp(\Omega((\log\log |W_n|)^2))$ with uniformly
bounded distortion does not follow from
results of Section~\ref{S:Equi&Eucl&Mix}.

We start from two simple results about nonembeddability of $W_n$'s
with uniformly bounded distortion into low-dimen\-sio\-nal
Euclidean spaces and equilateral spaces.

\begin{proposition}\label{P:EucNoLog}
The distortions of embeddings of $W_n$ into $\ell_2^{k(n)}$ can be
uniformly boun\-ded only if $k(n)\ge cn\approx c\log(|W_n|)$ for
some $c>0$.
\end{proposition}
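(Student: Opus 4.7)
The plan is to exhibit inside $W_n$ a large equilateral subset and then invoke the volumetric bound of Lemma~\ref{L:lowerequi} applied to $\ell_2^{k(n)}$.

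First I would identify the equilateral subset. Let $t$ be the bottom vertex of $D_0$ (which is also a vertex of every $D_k$ and of $W_n$). At the $n$-th stage of the construction of $D_n$, the edges of $D_{n-1}$ incident to $t$ are subdivided, creating $2^n$ new vertices that are the ``fresh'' neighbors of $t$ in $D_n$; call this set $F_n$. In $W_n$ each of these vertices is adjacent to $t$ by an edge of weight $\left(\frac12+\ep\right)^n$, so $d_W(t,a)=\left(\frac12+\ep\right)^n$ for every $a\in F_n$. A vertex $a\in F_n$ was introduced when some edge $tv\in E(D_{n-1})$ was subdivided, so in $W_n$ the only edges incident to $a$ are $at$ and $av$, both of weight $\left(\frac12+\ep\right)^n$. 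Consequently, for any two distinct $a,a'\in F_n$ every $aa'$-path uses at least two edges, each of length $\ge \left(\frac12+\ep\right)^n$, so $d_W(a,a')\ge 2\left(\frac12+\ep\right)^n$; passing through $t$ gives equality. Hence $F_n$ is an equilateral subset of $W_n$ of cardinality $2^n$ with common distance $2\left(\frac12+\ep\right)^n$.

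Next I would apply Lemma~\ref{L:lowerequi} to $\ell_2^{k(n)}$. If $W_n$ admits a $K$-embedding into $\ell_2^{k(n)}$, then the restriction of this embedding to $F_n$ is also a $K$-embedding of the equilateral space $F_n$ into $\ell_2^{k(n)}$. Lemma~\ref{L:lowerequi} then yields
\[
2^n=|F_n|\le (2K+1)^{k(n)},
\]
which rearranges to
\[
k(n)\ge \frac{\log 2}{\log(2K+1)}\,n.
\]

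Finally I would convert $n$ into $\log|W_n|$ using the estimate \eqref{cardwn}, which gives $n\ge \tfrac12\log_2|W_n|$. Combining this with the previous inequality produces
\[
k(n)\ge \frac{\log 2}{2\log(2K+1)}\,\log_2|W_n|,
\]
so $k(n)\ge c\log|W_n|$ for an appropriate $c>0$, as claimed. There is no genuine obstacle here beyond verifying that $F_n$ is equilateral in the weighted metric, which follows from the fact that vertices of $F_n$ are introduced at the final stage and therefore have degree $2$ in $W_n$ with both incident edges of minimal weight.
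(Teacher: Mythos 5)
Your proposal is correct and follows essentially the same route as the paper: the paper proves Proposition~\ref{P:EucNoLog} by combining Lemma~\ref{L:lowerequi} with Lemma~\ref{L:equiwn}, whose proof uses exactly your equilateral set, namely the $2^n$ neighbors of the bottom vertex joined to it by edges of weight $\left(\frac12+\ep\right)^n$, at mutual distance $2\left(\frac12+\ep\right)^n$. Your verification of the equilateral property and the conversion of $n$ into $\log|W_n|$ via \eqref{cardwn} just spell out details the paper leaves implicit.
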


\begin{proof} This is an immediate consequence of Lemma~\ref{L:lowerequi} and the following lemma.
\end{proof}

\begin{lemma}\label{L:equiwn}
The spaces $W_n$ contain equilateral subsets of sizes
$2^m$ for all $m\le n$.
\end{lemma}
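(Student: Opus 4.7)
Set $\eta=\tfrac12+\varepsilon$, fix one of the two poles of $W_n$ (the two vertices of $D_0$), call it $u$, and for $0\le m\le n$ let
\[
N_m(u)=\{\,x\in V(D_m):ux\in E(D_m)\,\}
\]
be the set of neighbors of $u$ in the graph $D_m$, each joined to $u$ by an edge of weight $\eta^m$ in $W_n$. My plan is to prove that $N_m(u)$ is an equilateral subset of $W_n$ of cardinality $2^m$ with common pairwise distance $2\eta^m$, for every $1\le m\le n$ (the case $m=0$ being trivial).

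The cardinality $|N_m(u)|=2^m$ follows by induction on $m$: $u$ has one neighbor in $D_0$, and passing from $D_{k-1}$ to $D_k$ each edge of $D_{k-1}$ incident to $u$ is replaced by a $4$-cycle whose two new vertices become neighbors of $u$ in $D_k$, so $\deg_{D_k}(u)=2\deg_{D_{k-1}}(u)$. Next, I would verify that for any $p\in N_m(u)$ one has $d_W(p,u)=\eta^m$: the direct edge $up$ gives the upper bound, and any alternative path must detour through the refinement subdiamond attached to $up$ at level $m+1$, costing at least $2\eta^{m+1}=\eta^m\cdot 2\eta>\eta^m$ since $\eta>\tfrac12$; iterating this observation shows no refinement ever shortens the direct edge. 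This already gives the upper bound $d_W(p,q)\le d_W(p,u)+d_W(u,q)=2\eta^m$ for any two distinct $p,q\in N_m(u)$.

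The core of the argument is the matching lower bound $d_W(p,q)\ge 2\eta^m$, which I would establish using the ``cell'' structure of weighted diamonds. Each $p\in N_m(u)$ was introduced when refining an edge $e_p=uw$ of $D_{m-1}$; let the \emph{cell} of $e_p$ be the set of all vertices of $W_n$ created at levels $m,m+1,\dots,n$ inside the nested refinements of $e_p$. The key structural fact is that this cell is attached to the remainder of $W_n$ only through its two endpoints $u$ and $w$ (no edge of $W_n$ joins an internal vertex of the cell to a vertex outside the cell other than $u$ or $w$), because every edge of $W_n$ lies in some $D_k$ and was inserted when refining a single edge of $D_{k-1}$, so edges never cross cell boundaries. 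Consequently any path in $W_n$ from $p$ to $q$ must exit $p$'s cell through $u$ or $w$ and enter $q$'s cell (the refinement cell of some edge $e_q=uw'$ of $D_{m-1}$) through $u$ or $w'$. A short case analysis over the four exit/entrance combinations, using $d_W(u,w)=\eta^{m-1}$ and the inductive equality $d_W(w,w')=2\eta^{m-1}$ (both $w,w'\in N_{m-1}(u)$) when $w\ne w'$, shows that each such path has length at least $2\eta^m$, with equality attained precisely when we ``exit and re-enter through $u$''. Combined with the upper bound, this gives $d_W(p,q)=2\eta^m$.

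The main technical obstacle is the rigorous verification of the cell-separation property — that the only edges of $W_n$ crossing the boundary of the refinement cell of a level-$(m-1)$ edge $uw$ are the ones incident to $u$ or to $w$. Once this is in place, the case analysis and the induction on $m$ glue together immediately; I expect the cleanest bookkeeping to proceed by an outer induction on $n$ (or equivalently on the level index) tracking, for each edge of $W_n$, the unique minimal refinement tree containing it.
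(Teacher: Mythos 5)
Your proposal is correct and follows essentially the same route as the paper: the paper's (two-line) proof also takes the $2^m$ vertices joined to the bottom vertex $b$ by level-$m$ edges of weight $\left(\frac12+\ep\right)^m$ and asserts that their pairwise distances all equal $2\left(\frac12+\ep\right)^m$. The only difference is that you spell out the verification (degree doubling, no shortcuts, and the cell-separation/case analysis for the lower bound) that the paper leaves implicit, and that verification is sound.
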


\begin{proof} The bottom vertex
$b$ of $W_n$ is adjacent in $W_m\subseteq W_n$ to $2^m$ vertices $\{a_j\}_{j=1}^{2^m}$
with edges of length $\left(\frac12+\ep\right)^m$ joining them
and $b$, and thus for all $1\le i,j\le 2^m$, with $i\ne j$,
$d_{W_n}(a_i,a_j)=2\left(\frac12+\ep\right)^{m}$.
\end{proof}

\begin{remark} The same argument shows non-embeddability of $W_n$'s with uniformly
bounded distortion into any low-dimensional Banach spaces.
\end{remark}

\begin{proposition}\lb{nonembequilateral}
The spaces $W_n$  cannot be embedded with uniformly bounded
distortion into any equilateral spaces.
\end{proposition}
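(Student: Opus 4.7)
The plan is to show that the aspect ratio $\diam(W_n)/\min_{x\ne y}d_{W_n}(x,y)$ tends to infinity with $n$, and that this ratio is a lower bound on the distortion of any embedding into an equilateral space; hence no uniform distortion bound is possible.

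First I would establish the following general observation. Suppose $f\colon (X,d_X)\to(Y,d_Y)$ is an embedding of finite distortion into an equilateral space where all distinct pairs are at distance $c>0$. Then $f$ must be injective, since otherwise $d_Y(f(x),f(y))=0$ for some $x\ne y$ and the second factor in the distortion formula is infinite; consequently $d_Y(f(x),f(y))=c$ for every pair of distinct $x,y\in X$. Substituting into the definition of distortion yields
$$\dist(f)=\frac{c}{\min_{x\ne y}d_X(x,y)}\cdot\frac{\max_{x\ne y}d_X(x,y)}{c}=\frac{\diam(X)}{\min_{x\ne y}d_X(x,y)}.$$

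Next I would compute the aspect ratio of $W_n$. Every edge weight in $W_n$ is at least $\left(\tfrac12+\ep\right)^n$ (the smallest weight, introduced at the $n$-th stage), and since any nontrivial shortest path contains at least one edge, $\min_{x\ne y}d_{W_n}(x,y)=\left(\tfrac12+\ep\right)^n$, attained at the endpoints of any edge of weight $\left(\tfrac12+\ep\right)^n$. On the other hand, the two endpoints of the original $W_0$-edge remain in $W_n$ and are joined there by the edge of weight $1$, so $\diam(W_n)\ge 1$. Combining, every $K$-embedding of $W_n$ into an equilateral space satisfies $K\ge\left(\tfrac12+\ep\right)^{-n}\to\infty$ as $n\to\infty$, which proves the proposition.

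There is no serious obstacle; the argument is entirely elementary, and the main content is simply the observation that equilateral spaces have aspect ratio $1$ while $W_n$ has aspect ratio growing geometrically. The only steps that require a moment's care are verifying that the smallest edge weight really is the minimum pairwise distance and that no later refinement of the diamond structure shortens the distance between the two $W_0$-vertices below $1$; both facts are clear from the construction, since newly inserted edges always have weight strictly smaller than the edge they refine and the direct $W_0$-edge of weight $1$ is not bypassed by any shorter path.
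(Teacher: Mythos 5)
Your proposal is correct and follows essentially the same route as the paper: the paper's proof likewise observes that $W_n$ contains a pair at distance $\ge 1$ and a pair at distance $\left(\frac12+\ep\right)^n$, so any embedding into an equilateral space has distortion at least $\left(\frac12+\ep\right)^{-n}\to\infty$. Your additional verification that the equilateral target forces the distortion to equal the aspect ratio is just a slightly more explicit version of the same elementary computation.
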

\begin{proof}
The maximal distance between two elements in $W_n$ is $\ge 1$, and the
minimal distance is $\left(\frac12+\ep\right)^n$. Hence any
embedding of $W_n$ into an equilateral space has distortion  greater than or equal
to $\left(\frac12+\ep\right)^{-n}$. Thus, since $\e\in(0, \frac12)$,
 the distortions are not uniformly bounded.
\end{proof}

In the next two propositions we show  that $W_n$'s do not admit
bilipschitz embeddings with uniformly bounded distortions into
spaces of the form $M_\be(\N)$, where $\be\ge 1/2$, the collection
$\N$ of metric spaces is such that $W_n$ do no admit bilipschitz
embeddings with uniformly bounded distortions into any $N\in\N$
and $M$ is either an equilateral space or a metric space that
admits a bounded-distortion embedding into a $O((\log\log
|W_n|)^2)$-dimensional Euclidean space.

\begin{proposition}\label{notequibeta}
For $n\in\bbN$, let $A_n, B_n>0$ be  constants so that there
exists a finite equilateral metric space $M$, $\be\ge 1/2$, and
$\N=\{N_x\}_{x\in M}$
 a collection of finite metric spaces so that for any $x\in M$, $W_n$ cannot be embedded into $N_x$ with distortion $\le A_n/B_n$, and so that there exists an embedding $\phi:W_n\lra M_\be(\N)$ such that for all $u_1, u_2\in W_n$,
\begin{equation}\label{lipequilateral}
B_n d_{W_n}(u_1,u_2)\le d_\be(\phi(u_1),\phi(u_2))\le A_n d_{W_n}(u_1,u_2).
\end{equation}
Then
\[\lim_{n\to\infty}\frac{A_n}{B_n}=\infty.\]
\end{proposition}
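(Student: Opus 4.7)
The plan is to assume that $\phi$ does not map $W_n$ into a single $N_x$, and from this alone deduce the bound $A_n/B_n\ge\tfrac12(1/2+\e)^{-n}$. This will suffice, because the hypothesis automatically forces the assumption: if $\phi(W_n)\subseteq N_x$ for some $x\in M$ then $\phi$, viewed as a map into $N_x$, is a bilipschitz embedding of $W_n$ with distortion at most $A_n/B_n$, contradicting the non-embeddability of $W_n$ into every $N_x\in\N$ with distortion $\le A_n/B_n$. Let $c$ denote the common distance of the equilateral space $M$ and set $\g=(\max_{x\in M}\diam N_x)/c$; by Definition~\ref{D:MetrComp} every pair of points of $M_\be[\N]$ lying in distinct $N_x$'s is at distance exactly $\be\g c$, so \eqref{lipequilateral} gives that every pair $(u_1,u_2)\in W_n$ whose $\phi$-images lie in different $N_x$'s (an \emph{inter-group pair}) satisfies
\[
\frac{\be\g c}{A_n}\;\le\;d_{W_n}(u_1,u_2)\;\le\;\frac{\be\g c}{B_n}.
\]

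The structural heart of the proof is the claim that, under our standing assumption, some edge of $W_n$ of minimal weight $(1/2+\e)^n$ is inter-group. Let $k^*$ be the largest $k\in\{0,\dots,n\}$ for which some edge of weight $(1/2+\e)^k$ has endpoints in different $N_x$'s. I claim $k^*=n$. Otherwise every edge of weight strictly less than $(1/2+\e)^{k^*}$, i.e.\ every edge of $D_{k^*+1}\cup\dots\cup D_n$, has both endpoints in a common $N_x$. But the spanning subgraph $G_{>k^*}$ of $W_n$ formed by exactly these edges is connected: inside $D_{k^*+1}$, each edge $uv$ of $D_{k^*}$ is replaced by a $4$-cycle, producing a length-$2$ path from $u$ to $v$ through a newly added vertex using only level-$(k^*+1)$ edges, so the connectivity of $D_{k^*}$ lifts to $V(D_{k^*+1})$, and inductively each subsequent $D_{k^*+j}$ attaches its newly introduced vertices to $V(D_{k^*+j-1})$ by level-$(k^*+j)$ edges, so all of $V(W_n)=V(D_n)$ lies in a single component of $G_{>k^*}$. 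Since no edge of $G_{>k^*}$ is inter-group, all vertices of $W_n$ would lie in the same $N_x$, contradicting the standing assumption. This connectivity lifting, together with the bookkeeping of which edges belong to which $D_j$, is the step I expect to be the most delicate.

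Once $k^*=n$ is established, there is an inter-group pair $(u,v)$ with $d_{W_n}(u,v)=(1/2+\e)^n$, so $\be\g c\le A_n(1/2+\e)^n$. To match this with a lower bound on $\be\g c$, I exhibit an inter-group pair at $W_n$-distance at least $1/2$. Let $t,b$ be the endpoints of the unique weight-$1$ edge of $W_n$, so $d_{W_n}(t,b)=1$ (any alternative $t\po b$ path has length at least $2(1/2+\e)>1$). If $\phi(t)$ and $\phi(b)$ lie in different $N_x$'s, the pair $(t,b)$ already works. Otherwise $\phi(t),\phi(b)$ lie in the same $N_x$; pick any $w\in W_n$ with $\phi(w)$ in a different $N_x$ (such $w$ exists by the standing assumption), and the triangle inequality $d_{W_n}(t,w)+d_{W_n}(w,b)\ge d_{W_n}(t,b)=1$ forces $\max\{d_{W_n}(t,w),d_{W_n}(w,b)\}\ge 1/2$, with both candidate pairs inter-group. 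Either way $\be\g c\ge B_n/2$, and dividing the two estimates yields $A_n/B_n\ge\tfrac12(1/2+\e)^{-n}\to\infty$ as $n\to\infty$ since $\e\in(0,1/2)$, completing the proof.
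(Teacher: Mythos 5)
Your proof is correct and follows essentially the same route as the paper: both arguments deduce from the non-embeddability hypothesis that the image of $W_n$ must meet at least two of the $N_x$'s, use connectivity along the minimal-weight (level-$n$) edges to produce an inter-group pair at distance $\left(\frac12+\e\right)^n$, hence $\be\g\le A_n\left(\frac12+\e\right)^n$, and then use a pair at distance $1$ to bound $\be\g$ from below by a constant multiple of $B_n$. The only cosmetic difference is in this last step: the paper splits into cases, using the diameter bound $d_\be\le\g$ together with $\be\ge\frac12$ when both images lie in the same $N_x$, whereas you apply the triangle inequality through a vertex mapped into a different group to always obtain an inter-group pair at distance $\ge\frac12$, thereby never needing $\be\ge\frac12$.
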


\begin{proof}
Since $M$ is equilateral, we have
\[d_\be(y_1,y_2)=\begin{cases} d_{N_x}(y_1,y_2) \ \ \ \ &{\text{\rm if }} y_1,y_2\in N_x;\\
\be\g &{\text{\rm if }} y_1\in N_{x_1}, y_2\in N_{x_2}, x_1\ne x_2
\end{cases}\]
where $\g=\max_{x\in M} \diam(N_x)$.

Since for any $x\in M$, $W_n$ cannot be embedded into $N_x$ with distortion $\le A_n/B_n$, there exist $v_1, v_2\in W_n$ so that
\begin{equation}\label{diffx}
\phi(v_1)\in N_{x_1},\phi(v_2)\in N_{x_2},
\text{\rm and } x_1\ne x_2.
\end{equation}

 In $W_n$ we can travel between any pair of vertices
using  edges of the smallest
available length, i.e. there exists a sequence of vertices $\{u_j\}_{j=0}^{j_0}$ so that $u_0=v_1$, $u_{j_0}=v_2$, and for all $0<j\le j_0$,
$$d_{W_n}(u_{j},u_{j-1})=\left(\frac12 +\e\right)^n.$$
 For $0<j\le j_0$, let $z_j=\phi(u_j)$. By \eqref{diffx}, there exists $0<i\le j_0$ so that $z_{i-1}\in N_{x}$, $z_{i}\in N_{x'}$, and $x\ne x'$. Thus $d_\be(z_{i-1},z_i)=\be\g$. Hence by \eqref{lipequilateral}, we have
\begin{equation}\label{gamma}
B_n \left(\frac12 +\e\right)^n\le \be\g\le A_n\left(\frac12 +\e\right)^n.
\end{equation}

Now let $w_1, w_2\in W_k$ be such that $d_{W_k}(w_1,w_2)=1$.
Then
\begin{equation}\label{diameter}
B_n \le d_\be(\phi(w_1),\phi(w_2))\le A_n.
\end{equation}

If there exists $x\in M$ so that $\phi(w_1),\phi(w_2)\in N_x$,
then
$d_\be(\phi(w_1),\phi(w_2))=d_{N_x}(\phi(w_1),\phi(w_2))\le\g$.
Therefore, combining \eqref{gamma} and \eqref{diameter}, since
$\be\ge1/2$, we obtain
\[\frac{A_n}{B_n}\ge{\be}\left(\frac{1}{\frac12 +\e}\right)^n\ge \frac12\left(\frac{1}{\frac12 +\e}\right)^n.\]

If  $\phi(w_1)\in N_{x_1},\phi(w_2)\in N_{x_2}$, where $x_1\ne
x_2$, then $d_\be(\phi(w_1),\phi(w_2))=\be\g$.  Therefore,
combining \eqref{gamma} and \eqref{diameter}, we obtain
\[\frac{A_n}{B_n}\ge \left(\frac{1}{\frac12 +\e}\right)^n.\]
Since $\left(\frac12 +\e\right)<1$, in either case the proposition
is proven.
\end{proof}

\begin{proposition}\label{P:NEWithEuclStart}
Let $C\ge 1$. For $n\in\bbN$, let $A_n, B_n>0$ be  constants,
possibly depending on $C$, so that there exists a finite  metric
space $M$ which admits a $C$-embedding  into a
$O((\log\log|W_n|)^2)$-dimensional Euclidean space, $\be\ge 1/2$,
and $\N=\{N_x\}_{x\in M}$
 a collection of finite metric spaces so that for any $x\in M$, $W_n$ cannot be embedded into $N_x$ with distortion $\le A_n/B_n$, and so that there exists an embedding $\phi:W_n\lra M_\be(\N)$ such that for all $u_1, u_2\in W_n$,
\begin{equation}\label{liphilbert}
B_n d_{W_n}(u_1,u_2)\le d_\be(\phi(u_1),\phi(u_2))\le A_n d_{W_n}(u_1,u_2).
\end{equation}
Then for all $C\ge 1$,
\[\lim_{n\to\infty}\frac{A_n}{B_n}=\infty.\]
\end{proposition}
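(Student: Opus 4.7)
The plan is to adapt the strategy of Proposition~\ref{notequibeta}, replacing the trivial observation that in an equilateral $M$ there is a single inter-$N_x$ distance by a volumetric bound derived from the $C$-embedding of $M$ into low-dimensional Euclidean space. Suppose for contradiction that $A_n/B_n \le K$ for some constant $K$ and infinitely many $n$, and fix such an $n$, eventually taken large. Let $\mathfrak D := \max_{x\in M}\diam(N_x)$, $\gamma := \mathfrak D/\min_{x\ne y}d_M(x,y)$, and for each $v\in W_n$ let $\tilde\phi(v)\in M$ denote the point with $\phi(v)\in N_{\tilde\phi(v)}$. Since $W_n$ does not embed into any $N_x$ with distortion $\le A_n/B_n$, $\phi(W_n)$ is not contained in a single $N_x$. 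Because $D_n$ is a connected spanning subgraph of $W_n$ all of whose edges have weight $(\tfrac12+\ep)^n$, some edge $uw$ of $D_n$ satisfies $\tilde\phi(u)\ne\tilde\phi(w)$, and then
\[
\beta\mathfrak D \;=\; \beta\gamma\min_{x\ne y}d_M(x,y) \;\le\; d_\beta(\phi(u),\phi(w)) \;\le\; A_n(\tfrac12+\ep)^n,
\]
so $\mathfrak D \le A_n(\tfrac12+\ep)^n/\beta$.

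Next, apply Lemma~\ref{L:equiwn}: for each $m\le n$ the set $S_m^b\subseteq W_n$ of $2^m$ neighbors of $b$ via weight-$(\tfrac12+\ep)^m$ edges is equilateral at pairwise distance $2(\tfrac12+\ep)^m$. Set $T_m := \tilde\phi(S_m^b)\subseteq M$. For distinct $x_1,x_2\in T_m$ arising from $a_i,a_j\in S_m^b$, one has $d_M(x_1,x_2)=d_\beta(\phi(a_i),\phi(a_j))/(\beta\gamma)\in[2B_n(\tfrac12+\ep)^m/(\beta\gamma),\,2A_n(\tfrac12+\ep)^m/(\beta\gamma)]$, so $T_m$ has max-to-min distance ratio at most $A_n/B_n$. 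A straightforward modification of the volumetric argument in Lemma~\ref{L:lowerequi} shows that any subset of a metric space with max-to-min distance ratio $\le L$ which is $C'$-embeddable into an $n$-dimensional Banach space has cardinality at most $(2C'L+1)^n$. Applied to $T_m\subseteq M$ using $L=A_n/B_n$ and the hypothesized $C$-embedding of $M$ into $\ell_2^k$ with $k=O((\log\log|W_n|)^2)=O((\log n)^2)$, this gives $|T_m|\le(2C(A_n/B_n)+1)^k \le (2CK+1)^k$.

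Take $m$ to be the least integer with $2^m>(2CK+1)^k$: since $K$ is bounded, $m=O((\log n)^2)$, so $m\le n$ for $n$ sufficiently large. Pigeonhole provides distinct $a_i,a_j\in S_m^b$ with $\tilde\phi(a_i)=\tilde\phi(a_j)=:x_0$; then both $\phi(a_i),\phi(a_j)\in N_{x_0}$, so
\[
2B_n(\tfrac12+\ep)^m \;\le\; d_\beta(\phi(a_i),\phi(a_j)) \;\le\; \diam(N_{x_0}) \;\le\; \mathfrak D \;\le\; A_n(\tfrac12+\ep)^n/\beta,
\]
which rearranges to $\bigl(1/(\tfrac12+\ep)\bigr)^{n-m}\le A_n/(2\beta B_n)\le K$. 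Taking logarithms (and using $0<\tfrac12+\ep<1$) yields $n-m\le \log K/\log\bigl(1/(\tfrac12+\ep)\bigr)$, a fixed constant. Combined with $m=O((\log n)^2)$ this forces $n=O((\log n)^2)$ for arbitrarily large $n$, a contradiction, proving $A_n/B_n\to\infty$. The main obstacle, overcome by the volumetric bound, is the lack of a single inter-$N_x$ distance in the non-equilateral setting: one must use the low-dimensional Euclidean structure of $M$ to force the equilateral set $S_m^b$ to collide in a single $N_{x_0}$, and then extract a contradiction from the diameter bound on $N_{x_0}$.
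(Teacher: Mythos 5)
Your proof is correct and takes essentially the same route as the paper's: force two points of an equilateral subset of $W_n$ (Lemma~\ref{L:equiwn}) into a single block $N_{x_0}$ via the volumetric bound of Lemma~\ref{L:lowerequi} applied through the Euclidean embedding of $M$, and then play $\diam(N_{x_0})\ge 2B_n(\frac12+\ep)^m$ against the inter-block gap $\le A_n(\frac12+\ep)^n$ witnessed by adjacent vertices landing in different blocks. The differences are only bookkeeping: you argue by contradiction with an adaptively chosen level $m=\Theta((\log n)^2)$ and a max/min-ratio variant of Lemma~\ref{L:lowerequi} inside $M$, where the paper fixes the level $\lfloor\sqrt n\rfloor$ and reads off the divergence of $A_n/B_n$ directly.
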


\begin{proof}
By Lemma~\ref{L:equiwn}, since $\lfloor \sqrt{n}\rfloor\le n$, the
spaces $W_n$ contain equilateral subsets $S_n$ of sizes
$2^{\lfloor \sqrt{n}\rfloor}$ with distances between their
elements equal to $2\left(\frac12+\ep\right)^{\lfloor
\sqrt{n}\rfloor}$. By \eqref{cardwn} we have ${\lim_{n\to
\infty}\frac{\lfloor
\sqrt{n}\rfloor}{(\log_2\log_2|W_n|)^2}=\infty}$. Thus, by
Lemma~\ref{L:lowerequi}, $S_n$ do not admit bilipschitz embeddings
with uniformly bounded distortions into a
$O((\log\log|S_n|)^2)=O((\log\log|W_n|)^2)$-dimensional Euclidean
space, and thus into $M$. Therefore $\phi$ maps some elements of
$u_1,u_2\in S_n$   into the same set $N_{x_0}$ for some $x_0\in M$
(see Definition~\ref{D:MetrComp}). We have
\begin{equation*}\label{liphilbertsame}
B_n 2\left(\frac12+\ep\right)^{\lfloor \sqrt{n}\rfloor}\le
d_{N_{x_0}}(\phi(u_1),\phi(u_2))\le A_n
2\left(\frac12+\ep\right)^{\lfloor \sqrt{n}\rfloor}
\end{equation*}
and
\begin{equation}\label{lipdiam}
\max_{x\in M}\diam(N_x)\ge \diam(N_{x_0})\ge d_{N_{x_0}}(\phi(u_1),\phi(u_2))\ge
B_n 2\left(\frac12+\ep\right)^{\lfloor \sqrt{n}\rfloor}.
\end{equation}

On the other hand,
our assumptions imply that all elements of $W_n$ are not mapped into
the same set $N_x$, that is
 there exist vertices $v_1$ and $v_2$ of $W_n$ which are mapped into sets
$N_{x_1}$ and $N_{x_2}$ with $x_1\ne x_2$. Using the same  argument as in the proof of \eqref{gamma} in Proposition~\ref{notequibeta}, we obtain that there exist $y_1\ne y_2\in M$ so that
\begin{equation}\label{gamma2}
B_n \left(\frac12 +\e\right)^n\le \be\g d_M(y_1,y_2)\le A_n\left(\frac12 +\e\right)^n,
\end{equation}
where $\g=\frac{\max_{x\in M} \diam(N_x)}{\min_{x\ne y\in M}d_M(x,y)}$. By \eqref{lipdiam} and \eqref{gamma2} we get
\[
\begin{split}
A_n\left(\frac12 +\e\right)^n&\ge \be\g d_M(y_1,y_2)= \be \cdot \frac{\max_{x\in M} \diam(N_x)}{\min_{x\ne y\in M}d_M(x,y)}\
d_M(y_1,y_2)\\
&\ge \be \max_{x\in M} \diam(N_x) \ge \be B_n 2\left(\frac12+\ep\right)^{\lfloor \sqrt{n}\rfloor}.
\end{split}
\]

Therefore

\[\frac{A_n}{B_n}\ge\left(\frac12+\ep\right)^{\lfloor \sqrt{n}\rfloor-n}.
\]
Since $\left(\frac12 +\e\right)<1$, the proposition is proven.
\end{proof}

We are now ready to prove the main result of this subsection.

We denote by $\mathcal{E}$ the class of all finite equilateral
spaces, by ${\mathcal{L}}_{n,C}$, for $n\in\bbN$ and $C\ge 1$, the
class of all finite metric spaces that admit  embeddings into
$O((\log n)^2)=O((\log\log |W_n|)^2)$-dimensional Euclidean spaces
with distortion $\le C$, and let
${\mathcal{M}}_{n,C}={\mathcal{E}}\cup{\mathcal{L}}_{n,C}$.

\begin{theorem}\lb{Wndoesnotembed}
For any  $C, \beta\ge 1$, the spaces $W_n$ do not admit embeddings with a uniformly  bounded distortion  into  metric spaces $V\in\comp_{\be}({\mathcal{M}}_{n,C})$.
\end{theorem}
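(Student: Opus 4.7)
The plan is to prove the theorem by induction on the complexity level $m$ in the decomposition $\comp_\be(\mathcal{M}_{n,C})=\bigcup_{m=0}^\infty\mathcal{C}_m$ of \eqref{complexity}. The precise inductive claim I would prove is: for every $K\ge 1$ there exists $n_0=n_0(K,C,\e)$, \emph{independent of $m$}, such that for every $n\ge n_0$ and every $m\ge 0$, the space $W_n$ does not embed into any space in $\mathcal{C}_m$ with distortion $\le K$. Once this is established, the theorem follows immediately from $\comp_\be(\mathcal{M}_{n,C})=\bigcup_m\mathcal{C}_m$.

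For the base case $m=0$, one has $V\in\mathcal{M}_{n,C}=\mathcal{E}\cup\mathcal{L}_{n,C}$. If $V\in\mathcal{E}$, Proposition \ref{nonembequilateral} gives non-embeddability with distortion $\le K$ as soon as $(\tfrac12+\e)^{-n}>K$. If $V\in\mathcal{L}_{n,C}$, then a $K$-embedding of $W_n$ into $V$ composes with the $C$-embedding of $V$ into a Euclidean space of dimension $O((\log\log|W_n|)^2)$ to produce a $CK$-embedding of $W_n$ into a Euclidean space of dimension $O((\log n)^2)$. Combining the equilateral subsets of $W_n$ of size $2^n$ from Lemma \ref{L:equiwn} with Lemma \ref{L:lowerequi} (or directly Proposition \ref{P:EucNoLog}), this forces $(\log n)^2=\Omega(n)$, which fails for $n$ larger than some threshold depending only on $K$ and $C$.

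For the inductive step, fix $m\ge 1$ and assume the claim at all levels $<m$. Let $V=M_{\be'}[\N]\in\mathcal{C}_m$ with $M\in\mathcal{M}_{n,C}$, $\be'\ge\be$, and every $N_x\in\bigcup_{i<m}\mathcal{C}_i$, and suppose $\phi\colon W_n\to V$ is a $K$-embedding with bi-Lipschitz constants $A_n,B_n$ satisfying $A_n/B_n\le K$. By the inductive hypothesis, for $n\ge n_0(K,C,\e)$ no $N_x$ admits an embedding of $W_n$ with distortion $\le K$, and in particular none with distortion $\le A_n/B_n$. This places us exactly in the hypothesis of Proposition \ref{notequibeta} when $M\in\mathcal{E}$, or of Proposition \ref{P:NEWithEuclStart} when $M\in\mathcal{L}_{n,C}$. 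The proofs of these two propositions yield the explicit lower bounds $A_n/B_n\ge\bigl(1/(\tfrac12+\e)\bigr)^n$ and $A_n/B_n\ge(\tfrac12+\e)^{\lfloor\sqrt n\rfloor-n}$ respectively, both of which tend to $\infty$ as $n\to\infty$ and depend only on $n$ and $\e$, not on $M$, $\N$, or $m$. Hence for $n$ larger than a threshold that depends only on $K$ and $\e$, either bound exceeds $K$, contradicting $A_n/B_n\le K$.

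The key point, and essentially the only subtlety, is that the quantitative bounds extracted from Propositions \ref{notequibeta} and \ref{P:NEWithEuclStart} are \emph{uniform in the complexity level} $m$: both depend only on $n$ and $\e$. This uniformity is what allows the single threshold $n_0=n_0(K,C,\e)$ to work simultaneously at every level, and hence on all of $\comp_\be(\mathcal{M}_{n,C})$. The main obstacle is therefore already built into those two propositions rather than into the induction itself; once one has bounds of the form $(\mathrm{const})^{n-o(n)}$ with constant strictly greater than $1$, and a base case where $(\log\log|W_n|)^2$-dimensional Euclidean space is forbidden, the induction closes cleanly.
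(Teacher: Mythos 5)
Your proposal is correct and follows essentially the same route as the paper: induction on the complexity level $m$ from \eqref{complexity}, with the base case handled by Propositions~\ref{P:EucNoLog} and \ref{nonembequilateral} and the inductive step by Propositions~\ref{notequibeta} and \ref{P:NEWithEuclStart}. Your explicit insistence on a threshold $n_0=n_0(K,C,\e)$ uniform in $m$, extracted from the quantitative bounds inside the proofs of those two propositions, is a useful sharpening of what the paper leaves implicit (the paper also invokes Proposition~\ref{P:trans-comp} in its one-line inductive step, which your formulation bypasses since \eqref{complexity} already exhibits every space in ${\cal{C}}_m$ as a composition with base in ${\mathcal{M}}_{n,C}$).
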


\begin{proof} The proof is by induction on the level $m$ of complexity of  spaces  $V\in\comp_{\be}({\mathcal{M}}_{n,C})$ as defined in \eqref{complexity}.
The base case $m=0$ follows from Propositions~\ref{P:EucNoLog} and
\ref{nonembequilateral}. By Proposition~\ref{P:trans-comp}, the
inductive step follows from Propositions~\ref{notequibeta} and
\ref{P:NEWithEuclStart}.
\end{proof}

\subsection{Embeddings of weighted diamonds  into
low dimensional Banach spaces with uniformly bounded
distortion}\label{S:EmbW_kAnyBS}

We prove first that weighted diamonds can be embedded  into
low dimensional spaces with a basis with  distortion which is bounded by a constant that depends only on $\e$ and the basis constant of the target space.

\begin{theorem}\label{T:EmbAny} For every $C\ge 1$ and $\e\in(0,1/2)$ there exists a constant $D\ge 1$ so that for every $n\ge 2$,   $W_n$ $D$-embeds into every Banach space $X$ with
a Schauder basis with basis constant smaller than or equal to $C$ and of dimension $\ge\frac12 (\log_2 |W_n|)^2$.
\end{theorem}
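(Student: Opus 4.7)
Build $f\colon W_n\to X$ recursively, devoting one block of the Schauder basis to each level of the diamond hierarchy and using Lemma~\ref{L:apart} within each block to separate the subdiamonds at that level.

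From $\log_2|W_n|\ge 2n-1$, the hypothesis $\dim X\ge\tfrac12(\log_2|W_n|)^2$ gives, for $n\ge 2$, enough room to split off disjoint basis blocks $B_0,B_1,\dots,B_n$ with $\dim B_0=1$ and $\dim B_k=k$ for $k\ge 1$ (since $1+n(n+1)/2\le\tfrac12(2n-1)^2$). For each $k\ge 1$, Lemma~\ref{L:apart} yields inside $B_k$ a $\tfrac1{16}$-separated set of $2\cdot 4^{k-1}$ unit vectors; pair these up as $\{\varphi^+(S),\varphi^-(S)\}$ indexed by the $4^{k-1}$ subdiamonds $S$ at level~$k$ of $W_n$. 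Letting $w_0\in B_0$ be the unit basis vector, set $f(s)=0$, $f(t)=w_0$, and for every subdiamond $S$ at level~$k$ with diagonal $u_Sv_S$ and new vertices $a_S,b_S$,
\[
f(a_S)=\tfrac12(f(u_S)+f(v_S))+\mu_k\varphi^+(S),\qquad f(b_S)=\tfrac12(f(u_S)+f(v_S))+\mu_k\varphi^-(S),
\]
with $\mu_k=\rho(\tfrac12+\e)^k$ and $\rho=\rho(C,\e)>0$ a small constant to be fixed. (The uniform $+$ sign in both formulas---rather than the naive $\pm\mu_k\varphi(S)$ choice---forestalls sign cancellations in the lower-bound analysis below.)

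The upper (Lipschitz) bound $\|f(u)-f(v)\|\le D_1(\tfrac12+\e)^k$ for every edge of weight $(\tfrac12+\e)^k$ is proved by induction on $k$: the defining formula and the triangle inequality give $\|f(a_S)-f(u_S)\|\le\tfrac12\|f(u_S)-f(v_S)\|+\mu_k$, and the inductive hypothesis reduces this to requiring $D_1\ge\rho(1+2\e)/(2\e)$; the triangle inequality along shortest paths then extends to $\|f(x)-f(y)\|\le D_1\,d_{W_n}(x,y)$.

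The main obstacle is the co-Lipschitz bound. The key structural lemma, proved by induction on $\mathrm{level}(v)$ and using that both endpoints of any edge inside a subdiamond share the same level-$k$ ancestor in the subdiamond hierarchy for every $k$ below the edge's level, states that $P_{B_k}(f(v))=\beta_v\mu_k\varphi^{\sigma_v}(S_v^{(k)})$, where $S_v^{(k)}$ is the unique level-$k$ ancestor subdiamond of $v$, $\sigma_v\in\{+,-\}$ records whether $v$ descends from the $a$- or $b$-side of $S_v^{(k)}$, and $\beta_v$ is a positive dyadic rational of magnitude $2^{-(\mathrm{level}(v)-k)}$ (with $P_{B_k}(f(v))=0$ if $v$ lies below level~$k$). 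Given $x\ne y$, Lemma~\ref{C:le2} provides $(\tfrac12+\e)^{k^*}\le d_{W_n}(x,y)\le 2(\tfrac12+\e)^{k^*}/(\tfrac12-\e)$, where $k^*$ is the smallest level of any edge in a shortest $xy$-path. Choosing the block index $k$ to be the coarsest level at which the coefficient pattern of $f(x)$ differs from that of $f(y)$ (or $k=0$ when the $w_0$-coefficients differ), the structural lemma makes $P_{B_k}(f(x)-f(y))$ a difference of two vectors of the $\tfrac1{16}$-separated family---each with \emph{positive} coefficient by construction---so a reverse-triangle-inequality estimate of the form $\|\alpha u-\beta v\|\ge \tfrac12\max(\alpha,\beta)\|u-v\|$ gives $\|P_{B_k}(f(x)-f(y))\|\ge c(\e)\mu_{k^*}$. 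Applying $\|P_{B_k}\|\le 2C$ yields $\|f(x)-f(y)\|\ge c(C,\e)\,d_{W_n}(x,y)$, and after calibrating $\rho$ small enough (in terms of $C,\e$) so that this dominant contribution overcomes the geometric-series corrections from deeper levels, one obtains a final distortion $D=D(C,\e)$.
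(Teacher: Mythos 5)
Your construction of the embedding is essentially the paper's: the same block decomposition of the basis with $\dim Y_m=m$, the same $1/16$-separated vectors from Lemma~\ref{L:apart} (one per new vertex, with positive coefficient), and the same midpoint-plus-bump recursion; the Lipschitz bound and the dimension count are fine. The gap is in the co-Lipschitz step. You select the block $B_k$ at the \emph{coarsest} level where the coefficient patterns of $f(x)$ and $f(y)$ differ and claim $\|P_{B_k}(f(x)-f(y))\|\ge c(\e)\mu_{k^*}$. This is unjustified and in fact false: the positive coefficients $\beta_v$ in your structural lemma decay like $2^{-(\mathrm{level}(v)-k)}$ (not ``of magnitude'' that --- they range up to nearly $1$, but can be exactly that small), i.e.\ faster than the metric scale $(\tfrac12+\e)^{\mathrm{level}-k}$, so a coarse block can carry an exponentially-too-small signal. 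Concretely, take $x$ to be the bottom vertex of $W_n$ and $y$ a level-$n$ vertex adjacent to it: the coarsest differing block is $B_0$ (or $B_1$), where the difference has norm $\approx 2^{-n}$, while $d_{W_n}(x,y)=(\tfrac12+\e)^n$ and $\mu_{k^*}=\rho(\tfrac12+\e)^n\gg 2^{-n}$. Shrinking $\rho$ cannot repair this, since the failure is in the main term of the chosen block, not in lower-order corrections; the correct block in this example is $B_n$, i.e.\ a block tied to the scale of the pair, not the coarsest differing one.

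This is exactly where the paper's work lies and what your outline omits. The paper anchors the analysis at the level $t$ of the \emph{longest edge} $xy$ of a shortest path (Lemma~\ref{C:le2}), and uses the shortest-path geometry via the quantity $m_0$ of \eqref{m0} to prove (Lemma~\ref{L:Try}) that the relevant coefficients on $Y_t$ are bounded below by the absolute constant $(\tfrac12)^{m_0-1}$, so that \eqref{moreover} applies with constants independent of depth. Moreover, in the hardest configuration (cases (i) and (iii) together) both endpoints project onto $Y_t$ as multiples of the \emph{same} vector $\pi_{t,y}$ with nearly equal coefficients --- your ``coarsest differing level'' device does not make this case disappear, it only pushes you to a coarser block where, as above, the bound degenerates. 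The paper resolves it by descending to $Y_{t-1}$ and exploiting the exact relation $S_ny=\frac12(S_nx+S_n\bar x)+\pi_{t,y}$ to get \eqref{E:t-1Est}, whose coefficients are at least $\frac12-(\frac12)^{m_0}$. Without an argument that (a) identifies a block within a bounded number of levels of $k^*$ at which the patterns differ with coefficients bounded below by a constant, and (b) handles the same-vector near-cancellation case, your lower bound does not go through.
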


To apply Theorem \ref{T:EmbAny} to embeddings into arbitrary
finite dimensional spaces we need to know what is the best
estimate for the dimension of a subspace with a basis constant $C$
in any $n$-dimensional  Banach spaces.  More precisely, we are
interested in lower bounds  for the following problem.

\begin{problem}\label{P:Basis}
Let $C\in(1,\infty)$. Define the function $f_C(n)$ to be the
largest $k\in\mathbb{N}$ so that each $n$-dimensional Banach space
contains a $k$-dimensional subspace with basis constant at most
$C$. What are the estimates for  $f_C(n)$?
\end{problem}

Known upper estimates can be found in  \cite{MT93}. Many   experts believe that the techniques of \cite{MT93}
(which go back to Gluskin \cite{Glu81} and Szarek \cite{Sza83})
can be used to achieve the upper bound of order $n^{1/2}$ (perhaps
multiplied by some power of a logarithm), but it does not seem
that anyone has worked this out.

 The best lower bound for $f_C(n)$ that we have found in the literature is
a result Szarek and Tomczak-Jaegermann \cite{ST09}, where they
studied the nontrivial projection problem. Thus they were
interested in `large' subspaces with `large' codimension which
have small projection constants in comparison with their
dimension, but since the subspaces found in \cite{ST09} were close
to $\ell_p^k$ with $p\in\{1,2,\infty\}$, their result can be used
for our purposes. It appears that techniques of
\cite{AM83,Rud95,ST09,Tal95} could be useful for further work on
lower estimates for Problem \ref{P:Basis}.

We state here the result of \cite{ST09} in the form closest to the
answer to Problem~\ref{P:Basis}.

\begin{theorem}[\cite{ST09}] \label{SzTJ}
There exist absolute constants $A, B, C>0$ so that for every $n\ge
A$ and for every $n$-dimensional normed space $X$, there exists a
subspace $Y\subseteq X$ so that $\dim Y\ge B\exp(\frac12\sqrt{\ln n})$
and $Y$ is $C$-isomorphic to an $\ell_p$-space for some
$p\in\{1,2,\infty\}$.
\end{theorem}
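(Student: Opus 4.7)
The plan is to combine a three-way dichotomy with an iterative scheme that boosts the dimension bound from the $\ln n$ scale delivered by Dvoretzky's theorem up to the $\exp(\tfrac12\sqrt{\ln n})$ scale required here. The guiding philosophy is Alon--Milman: at each scale one either extracts a subspace that is already well-isomorphic to one of the canonical $\ell_p^k$ with $p\in\{1,2,\infty\}$, or one passes to a proper subspace whose dimension is a substantial power of the previous one and in which some geometric invariant has strictly improved; after $O(\log\log n)$ rounds the process must terminate in one of the three conclusions.

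The starting move is to apply Milman's formulation of Dvoretzky (Theorem \ref{T:DvoM}) to produce a subspace of $X$ that is $C$-isomorphic to $\ell_2^{k}$ with $k\ge s(C)\ln n$. If $s(C)\ln n\ge B\exp(\tfrac12\sqrt{\ln n})$ we are already done with $p=2$; this handles all $n$ below some absolute threshold, and we set $A$ to be that threshold. For $n\ge A$ the Dvoretzky bound is genuinely too weak, and one must exploit further structure of $X$.

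The core dichotomy, applied to any candidate subspace $Z\subseteq X$ of dimension $m$, is controlled by a quantitative type~$2$ constant of $Z$ restricted to sequences of length $\ell=\ell(m)$. If this constant is moderate, a Pisier--Dvoretzky refinement extracts a Euclidean subspace whose dimension is much larger than $\ln m$ and which already reaches the target bound. If it is large, the Maurey--Pisier machinery, in its quantitative Krivine-type form due to Amir--Milman and Figiel--Lindenstrauss--Milman, yields vectors spanning a subspace $C$-isomorphic to either $\ell_1^{k}$ or $\ell_\infty^{k}$ for a $k$ controlled by the type~$2$ constant. In the $\ell_\infty$ case we are done; in the $\ell_1$ case one first produces an $\ell_\infty^{k}$ subspace of $Z^{*}$, which dualizes to an $\ell_1^{k}$ quotient of $Z$, and then invokes Milman's quotient-of-subspace theorem to realize a genuine $\ell_1^{k'}$ subspace of $Z$ at the cost of a constant factor in both the distortion and the dimension. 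If none of these extractions at scale $m$ attains dimension $\exp(\tfrac12\sqrt{\ln n})$, one replaces $Z$ by a subspace of dimension $\sqrt{m}$ in which the relevant type/cotype invariant has improved by a definite multiplicative factor, and repeats. After $O(\log\log n)$ rounds the improvement compounds and forces one of the extractions to succeed.

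The main obstacle is calibrating the numerology so that the iteration delivers exactly the exponent $\tfrac12$ inside $\exp(\tfrac12\sqrt{\ln n})$, rather than a weaker exponent. This requires tight quantitative control over (i) the Krivine-type extraction constants, (ii) the loss in dimension incurred by Milman's quotient-of-subspace theorem used in the $\ell_1$ branch, and (iii) the balance between the rate $m\mapsto\sqrt{m}$ of dimension decay and the rate at which the type~$2$ constant improves; the exponent $\tfrac12$ is the fixed point of this geometric-mean recursion. A secondary obstacle is certifying that the $\ell_1^{k}$ conclusion really can be arranged as an honest subspace of the original $X$, not merely of a quotient, which is where the quotient-of-subspace theorem is essential.
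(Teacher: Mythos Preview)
The paper does not prove Theorem~\ref{SzTJ}; it is quoted from \cite{ST09} and used as a black box to deduce Corollary~\ref{cor:anyemb-old} from Theorem~\ref{T:EmbAny}. So there is no ``paper's own proof'' to compare against, and any attempt you submit here is extra-textual.

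That said, your sketch contains a concrete error. You write that in the $\ell_1$ branch one first obtains an $\ell_\infty^{k}$ subspace of $Z^{*}$, dualizes to an $\ell_1^{k}$ quotient of $Z$, and then ``invokes Milman's quotient-of-subspace theorem to realize a genuine $\ell_1^{k'}$ subspace of $Z$''. Milman's quotient-of-subspace theorem does not do this: it produces a nearly \emph{Euclidean} quotient of a subspace of proportional dimension, not an $\ell_1$ subspace, and there is no general mechanism that converts an $\ell_1$ quotient into an $\ell_1$ subspace with comparable dimension. In the actual argument of \cite{ST09} the $\ell_1$ conclusion arises directly as a subspace via cotype/Elton--Pajor type considerations, not by lifting a quotient. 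Your iterative $m\mapsto\sqrt m$ scheme with improving type-$2$ constant is also more heuristic than the real proof, which is organized around volume-ratio and $K$-convexity parameters rather than a clean geometric-mean recursion; the exponent $\tfrac12$ in $\exp(\tfrac12\sqrt{\ln n})$ is an artifact of the specific trade-offs in \cite{ST09}, not a fixed point of the recursion you describe. If you want to include a proof, you should follow \cite{ST09} rather than this outline.
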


As an immediate consequence of Theorems~\ref{T:EmbAny} and
\ref{SzTJ} we obtain that $W_n$'s can be embedded with uniformly bounded distortion in an  arbitrary Banach space of  dimension   $\exp(c(\log \log |W_n|)^2)$, for some  fixed $c>1$.

\begin{corollary}\label{cor:anyemb-old}
For every $\e\in (1/2,1)$, there exist  constants  $c, C>1$ so that for every $n\ge C$, $W_n$ can be embedded in every Banach
space $X$ with $\dim X\ge\exp(c(\log \log |W_n|)^2)$ with the distortion
bounded from above by a constant which depends only on
$\ep$.
\end{corollary}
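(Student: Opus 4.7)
The plan is a direct chaining of Theorem~\ref{T:EmbAny} with Theorem~\ref{SzTJ}, with one elementary calculation in between to verify that the dimension accounting works out. Fix $\e$ in the range for which Theorem~\ref{T:EmbAny} applies. Given an arbitrary Banach space $X$ with $\dim X \ge \exp(c(\log\log|W_n|)^2)$ (for a constant $c$ to be chosen), I would first apply Theorem~\ref{SzTJ}: provided $\dim X$ exceeds the absolute threshold $A$, there is a subspace $Y \subseteq X$ which is $C_0$-iso\-mor\-phic to some $\ell_p^k$ with $p \in \{1,2,\infty\}$ and with
\[
k \;=\; \dim Y \;\ge\; B\exp\!\Bigl(\tfrac{1}{2}\sqrt{\ln\dim X}\,\Bigr),
\]
where $B,C_0$ are the absolute constants supplied by Theorem~\ref{SzTJ}.

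The middle step is to check that, for $c$ chosen large enough, this $k$ exceeds the bound $\tfrac{1}{2}(\log_2|W_n|)^2$ required by Theorem~\ref{T:EmbAny}. Writing $N = |W_n|$, the assumption gives $\ln\dim X \ge c(\log\log N)^2$ (absorbing any change-of-base constant into $c$), hence
\[
B\exp\!\Bigl(\tfrac{1}{2}\sqrt{\ln\dim X}\,\Bigr) \;\ge\; B\exp\!\Bigl(\tfrac{\sqrt c}{2}\log\log N\Bigr) \;=\; B(\log N)^{\sqrt c/2}.
\]
Choosing $c$ so that $\sqrt c/2 > 2$ (say $c = 17$) and taking $n$ larger than an $\e$-independent threshold, the right-hand side dominates $\tfrac{1}{2}(\log_2 N)^2$. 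This supplies the dimension hypothesis of Theorem~\ref{T:EmbAny} for the target $\ell_p^k$.

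Finally, since $\ell_p^k$ has a monotone basis (basis constant $1$), transferring this basis through the $C_0$-isomorphism endows $Y$ with a Schauder basis of basis constant at most $C_0$. Theorem~\ref{T:EmbAny} applied to $Y$ in place of $X$, with the basis-constant parameter $C_0$, yields a constant $D = D(\e,C_0)$ such that $W_n$ admits a $D$-embedding into $Y$, and therefore into $X$. Because $C_0$ is an absolute constant from Theorem~\ref{SzTJ}, the final distortion depends only on $\e$, as claimed.

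There is essentially no obstacle to carrying out this argument: it is pure bookkeeping combining two black-box theorems. The only point that requires care is the threshold calculation in the middle paragraph, which also dictates the choice of the constant $c$ and of the lower threshold $C$ on $n$ (the latter must exceed both the Theorem~\ref{SzTJ} cutoff $A$ and the value beyond which $B(\log N)^{\sqrt c/2} \ge \tfrac{1}{2}(\log_2 N)^2$ holds).
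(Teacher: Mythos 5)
Your proposal is correct and follows exactly the route the paper intends: the paper derives this corollary as an immediate consequence of Theorem~\ref{T:EmbAny} and Theorem~\ref{SzTJ}, and your middle paragraph simply makes explicit the dimension bookkeeping (choosing $c$ with $\sqrt{c}/2>2$ so that $B(\log|W_n|)^{\sqrt c/2}$ eventually dominates $\tfrac12(\log_2|W_n|)^2$) together with the standard transfer of the monotone basis of $\ell_p^k$ through the absolute-constant isomorphism. No gaps; the only cosmetic remark is that the stated range $\e\in(1/2,1)$ is evidently a slip for the range $\e\in(0,1/2)$ in which $W_n$ and Theorem~\ref{T:EmbAny} are formulated, which you correctly sidestep.
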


The remainder of this section is devoted to the proof Theorem~\ref{T:EmbAny}.

\begin{proof}[Proof of Theorem \ref{T:EmbAny}]
Fix $C\ge 1$, $\e>0$,  and $n\ge 2$.
Let  $X$ be any Banach space  with $\dim X=d\ge\frac 12 (\log_2|W_n|)^2$, and with a
Schau\-der basis $\{x_i\}_{i=0}^{d-1}$ with basis constant at
most $C$ so that $\|x_i\|=1$ for all $i$. Let $Y_0=\Span\{x_0\}$,
$Y_1=\Span\{x_1\}$, $Y_m=\Span\{x_j :
\left(\sum_{k=1}^{m-1}k\right) +1 \le j \le \sum_{k=1}^{m}k\}$,
for $m=2,\dots,n$. Note that, for $m=1,\dots,n$, $\dim Y_m=m$, and thus, by \eqref{cardwn} and since $n\ge 2$,
$$ 1+\sum_{k=1}^{n}k= 1+\frac{n(n+1)}{2}\le\frac{(2n-1)^2}{2}\le \frac12 (\log_2|W_n|)^2\le d.$$
Thus there are enough    basis
 vectors in $X$ to define all these subspaces.
For  $m=0,\dots,n$, let $\{y_{m,k}\}_{k=1}^{2\cdot 4^{m-1}}$ be elements of the unit
sphere of $Y_m$ satisfying the conditions of
Lemma~\ref{L:apart} with $\de=1/16$. It is easy to see that for any $m=0,\dots,n$, any $1\le k\ne k'\le 2\cdot 4^{m-1}$, and any $a$ with $0\le a\le 1$, we have
\begin{equation}\label{moreover}
\|y_{m,k}-ay_{m,k'}\|\ge \de/2.
\end{equation}

 Note that for $m> m'$, $y_{m,k}$ and $y_{m',k'}$ are supported on disjoint intervals with respect to the basis
 $\{x_i\}_{i=0}^{d-1}$, and therefore
\begin{equation}\label{big}
\|y_{m,k}-y_{m',k'}\|\ge\frac1{C}\|y_{m',k'}\|=\frac1{C}.
\end{equation}

We construct an embedding $S_n:W_n\to X$ in the following
way.

\begin{itemize}

\item The map $S_n$ maps the vertices of $D_0$ to $0$ and $x_0$,
respectively. It is clear that $S_n|_{W_0}$ is an isometric
embedding.

\item The map $S_{n}|_{W_m}$, $1\le m\le n$, is an extension of
the map $S_{n}|_{W_{m-1}}$. Note that for each $m\ge 1$,
$|W_m\setminus W_{m-1}|=2\cdot 4^{m-1}$. Let
$\s_m:W_m\setminus W_{m-1}\to\{1,\dots,2\cdot 4^{m-1}\}$ be any
bijective map.
 Each vertex $w\in W_m\backslash W_{m-1}$ corresponds to a pair of vertices of
$W_{m-1}$: $w$ is the vertex of a $2$-edge path joining $u$ and
$v$. We map the vertex $w$ to
\begin{equation}\label{E:DefS_n}
\frac12(S_{n}u+S_n v)+\e\left(\frac12+\e\right)^{m-1}y_{m,\s_m(w)}.
\end{equation}
%\medskip

\end{itemize}

Now we estimate the distortion of $S_n$. First we observe that the
map $S_n$ is $1$-Lipschitz. This can be proved for $S_n|_{W_m}$ by
induction on $m=0,1,\dots,n$. It suffices to observe that for each
edge $uv$ in $W_{m-1}$ and each vertex $w$ satisfying the
condition of the previous paragraph we have
$d_{W_n}(u,w)=\left(\frac12+\ep\right)^{m}$ and
$$||S_nu-S_nw||\le
\frac12||S_nu-S_nv||+\ep\left(\frac12+\ep\right)^{m-1}\le\left(\frac12+\ep\right)^{m}.$$%\medskip

To estimate from above the Lipschitz constant of $S_n^{-1}$ we
consider any shortest path $P$ between two vertices $w,z$ in
$W_n$. Let $\left(\frac12+\ep\right)^t$ be the length of the
longest edge in it. By Lemma~\ref{C:le2},

\begin{equation}\label{E:dAbove}
d_{W_n}(w,z)\le\frac{2\left(\frac12+\ep\right)^t}{\left(\frac12-\ep\right)}.
\end{equation}

On the other hand, since the subspaces $Y_m$ are supported on
disjoint intervals with respect to the basis
$\{x_i\}_{i=0}^{d-1}$, for every $m\in\{1,\dots,n\}$, we have
\begin{equation}\label{E:FunctBelow}
||S_nw-S_nz||\ge \frac1{2C} \|(S_nw-S_nz)\big|_{Y_m}\|
\end{equation}
 where  $x|_{Y_m}$ denotes the
natural projection onto $Y_m$.%\medskip

Let $m_0\in
\mathbb{N}$ be the smallest number  such that
\begin{equation}\label{m0} \left(\frac12+\ep\right)+\left(\frac12+\ep\right)^2+\dots+\left(\frac12+
\ep\right)^{m_0}> 1+\left(\frac12+\ep\right)^{m_0}.\end{equation}
Such a number $m_0$ obviously exists if $\ep>0$. It is clear that
$m_0\ge 3$ if $\ep<\frac12$, and that $m_0$ depends only on $\ep$.
%\medskip

\remove{ Note that if $xy$ is an  edge of length
$\left(\frac12+\ep\right)^t$, then
  \begin{equation}\label{oneedge}
\|S_nx-S_ny\|\ge
\frac1{2C}\e\left(\frac12+\ep\right)^{t-1}.
\end{equation}

Indeed, this is obvious if $t=0$. If $t\ge 1$, one of the
vertices, say $y$, is of a later generation than the other vertex.
Then $y\in W_t\setminus W_{t-1}$, $x\in W_{t-1}$, and there exists
a vertex $\bar x\in W_{t-1}$ so that $xy$ is an edge in a
subdiamond with diagonal $\bar xy$, which is of length
$\left(\frac12+\ep\right)^{t-1}$ and
$$S_ny=\frac12(S_nx+S_n\bar x)+\e\left(\frac12+\ep\right)^{t-1}y_{t,\s_t(y)},$$
where $S_nx, S_n\bar x\in\Span\{\bigcup_{m<t} Y_m\}$ and
$y_{t,\s_t(y)}\in Y_t$. Thus \[\begin{split}\|S_nx-S_ny\|&\ge
\frac1{2C} \left\|(S_nx-S_ny)|_{Y_m}\right\|\\&= \frac1{2C}
\left\|\e\left(\frac12+\ep\right)^{t-1}y_{t,\s_t(y)}\right\|\\&=\frac1{2C}\e\left(\frac12+\ep\right)^{t-1}.\end{split}\]}

Now we turn to estimates of $||S_nw-S_nz||$ from below. Let $xy$
be one of the edges of the largest length
$\left(\frac12+\ep\right)^t$ in the path $P$ from $w$ to $z$ (by
Lemma~\ref{C:le2} we know that path $P$ contains at most two such
edges; and that if there are two of them, they share a vertex). We
restrict our attention to the case where at least one of the
vertices $x,y$ is not it $W_0$; the excluded case can be
considered along the same lines. \Buo we assume that $y\in
W_t\setminus W_{t-1}$ and $x\in W_{t-1}$. Let $\bar x$ be the
vertex in $W_{t-1}$ so that $x\bar x$ is an edge of the length
$\left(\frac12+\ep\right)^{t-1}$ and $y$ belongs to the subdiamond
with diagonal $x\bar{x}$. We assume that our notation is chosen in
such a way that $z$ is closer to $y$ than to $x$. Then the part of
the path $P$ from $y$ to $z$ does not contain an edge of length
$\left(\frac12+\ep\right)^t$, and $z$ is either in the subdiamond
with diagonal $yx$, or in the subdiamond with diagonal $y\bar{x}$.
%\medskip

To  simplify the notation, let us denote the vector
$\ep\left(\frac12+\ep\right)^{t-1}y_{t,\sigma_t(y)}$ by
$\pi_{t,y}$.

\begin{lemma}\label{L:Try} {\bf (i)} If $z$ is in the subdiamond with the
diagonal $y\bar{x}$, then \[S_nz|_{Y_t}=\varrho_1\pi_{t,y}\] for
some $\varrho_1\ge\left(\frac12\right)^{m_0-1}$.

{\bf (ii)} If $z$ is in the subdiamond with diagonal $yx$, then
\[(S_ny-S_nz)|_{Y_t}=\varrho_2\pi_{t,y}\] for some
$0\le\varrho_2\le\left(\frac12\right)^{m_0-1}$.

{\bf (iii)} If $w$ is in the subdiamond with diagonal $yx$, then
\[S_nw|_{Y_t}=\varrho_3\pi_{t,y}\] for some
$0\le\varrho_3\le\left(\frac12\right)^{m_0-1}$.

{\bf (iv)} If $w$ is not in the subdiamond with the diagonal $yx$,
then
$$S_nw|_{Y_t}=\varrho_4 y_{t,k},$$
 for some
$k\ne\sigma_t(y)$ and $\varrho_4\in[0,1]$.

\end{lemma}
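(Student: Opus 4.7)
The plan is to exploit the recursive linear structure of the embedding $S_n$ defined in \eqref{E:DefS_n}. For any vertex $v\in W_n$ at level $\ell(v)>t$ whose two parents in the diamond hierarchy are $u_1,u_2$, the term $\pi_{\ell(v),v}$ lies in $Y_{\ell(v)}$, which is supported on basis vectors disjoint from those of $Y_t$; projecting the defining identity $S_nv=\tfrac12(S_nu_1+S_nu_2)+\pi_{\ell(v),v}$ onto $Y_t$ therefore gives
\[S_nv|_{Y_t}=\tfrac12\bigl(S_nu_1|_{Y_t}+S_nu_2|_{Y_t}\bigr),\]
while $S_nv|_{Y_t}=\pi_{t,v}$ for $v\in W_t\setminus W_{t-1}$ and $S_nv|_{Y_t}=0$ for $v\in W_{t-1}$. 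My first step would be to verify by induction on $\ell(v)-t$ that each $v$ at level $>t$ lies in a unique ``level-$t$ subdiamond'' whose diagonal is a level-$t$ edge with a single level-$t$ endpoint $u_v$, and that $u_v$ is the only level-$t$ vertex appearing in the genealogy of $v$. It follows that $S_nv|_{Y_t}=c_v\pi_{t,u_v}$ for a coefficient $c_v\in[0,1]$ which I will interpret as the barycentric height of $v$ in its level-$t$ subdiamond, with $u_v$ at height $1$, the opposite diagonal endpoint at height $0$, and heights averaging at each midpoint.

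For parts (i)--(iii) the relevant subdiamonds (those with diagonals $y\bar x$ or $xy$) both have $y$ as their level-$t$ endpoint, so $u_v=y$ and $S_nv|_{Y_t}=c_v\pi_{t,y}$; the content of (i)--(iii) is therefore a bound on the height $c_v$. In each case some sub-path of $P$ must avoid a specific edge of length $(1/2+\ep)^t$: the $yz$-subpath avoids $y\bar x$ in (i) and $xy$ in (ii) (since $z$ was chosen closer to $y$), while the $wx$-subpath avoids $xy$ in (iii) because $P$ is simple and already uses $xy$ later. I would then invoke the same $m_0$-comparison as in the proof of Proposition~\ref{P:snowflake}: by the defining inequality \eqref{m0}, if a vertex sits at depth $\ge m_0$ inside a subdiamond along the staircase of successive midpoints descending from one diagonal endpoint, then the route via the diagonal edge is strictly shorter than any route avoiding that edge. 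This forces $h_z\ge(1/2)^{m_0-1}$ in (i), $h_z\ge 1-(1/2)^{m_0-1}$ (so $\varrho_2=1-h_z\le(1/2)^{m_0-1}$) in (ii), and $h_w\le(1/2)^{m_0-1}$ in (iii).

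For (iv) I additionally need to exclude the possibility that $w$ lies in the subdiamond with diagonal $y\bar x$, where $u_w$ would still be $y$. Supposing it did, the $wx$-subpath of $P$ would have to exit this subdiamond through $y$ or $\bar x$: exit through $y$ repeats $y$ in the simple path $P$; exit through $\bar x$ followed by the direct edge $\bar xx$ introduces an edge of length $(1/2+\ep)^{t-1}>(1/2+\ep)^t$, contradicting maximality of $xy$; and any detour from $\bar x$ to $x$ through $y'$ either uses three edges of length $(1/2+\ep)^t$ (violating Lemma~\ref{C:le2}) or produces a path strictly longer than the natural alternative $w\to\cdots\to y\to\cdots\to z$ staying inside the union of the two subdiamonds, contradicting that $P$ is a shortest path. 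Hence $w$ lies outside both subdiamonds with diagonals $xy$ and $y\bar x$, so $u_w\ne y$; consequently $S_nw|_{Y_t}=c_w\pi_{t,u_w}=c_w\ep(1/2+\ep)^{t-1}y_{t,\sigma_t(u_w)}$, and setting $k=\sigma_t(u_w)\ne\sigma_t(y)$ and $\varrho_4=c_w\ep(1/2+\ep)^{t-1}\in[0,1]$ yields (iv).

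The main obstacle I expect is making the $m_0$-comparison quantitatively rigorous for an arbitrary $z$ rather than only for the ``pure staircase'' vertices: one must identify the smallest sub-subdiamond of the relevant subdiamond containing $z$ whose diagonal has the opposite endpoint, argue that every $yz$-path avoiding the diagonal edge must begin by traversing at least the first $m_0-1$ steps of the descending midpoint staircase, and then sum the resulting edge lengths and apply \eqref{m0}. The degenerate boundary cases $z\in\{y,\bar x,x\}$ give $c_v\in\{0,1\}$ and are consistent with the stated bounds.
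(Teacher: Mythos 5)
Your overall strategy is the same as the paper's: the barycentric description of $S_nv|_{Y_t}$ (the paper encodes it in the identities $S_nz=(1-b)S_n\bar x+bS_ny+\bar z_t$ and $S_ny=\frac12(S_nx+S_n\bar x)+\pi_{t,y}$) together with $m_0$-type rerouting comparisons based on the minimality of $P$; your handling of (iv), where you explicitly rule out $w$ lying in the subdiamond with diagonal $y\bar x$, is if anything more careful than the paper's.

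However, your argument for (i) has a genuine gap. You compare only $y$--$z$ routes, taking as shortcut the diagonal edge $y\bar x$ itself; by \eqref{m0} this excludes from the $yz$-subpath only vertices at staircase depth $\ge m_0$ below $\bar x$, which gives at best $\varrho_1\ge\left(\frac12\right)^{m_0}$ and does not force $\varrho_1\ge\left(\frac12\right)^{m_0-1}$. Concretely, take $\frac12+\ep=0.7$, so $m_0=3$: let $v_1,v_2$ be staircase vertices below $\bar x$ of heights $\frac12,\frac14$, let $a$ be the midpoint of $\bar xv_2$ (height $\frac18$) and $z$ the midpoint of $av_2$, of height $\frac3{16}<\frac14=\left(\frac12\right)^{m_0-1}$. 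The route $y\to v_1\to v_2\to z$ has length $\left(\frac12+\ep\right)^{t+1}+\left(\frac12+\ep\right)^{t+2}+\left(\frac12+\ep\right)^{t+4}$ and is strictly shorter than every $y$--$z$ route using the edge $y\bar x$ (the best of those costs $\left(\frac12+\ep\right)^{t}+\left(\frac12+\ep\right)^{t+3}+\left(\frac12+\ep\right)^{t+4}$, and $0.7+0.49<1+0.343$), so no contradiction can be extracted from $y$-side comparisons alone. The paper's proof of (i) uses the other side of $P$: it reroutes the segment of $P$ starting at $x$ through the level-$(t-1)$ edge $x\bar x$, showing via \eqref{m0} that no vertex adjacent to $\bar x$ by an edge of length $\le\left(\frac12+\ep\right)^{t+m_0-1}$ (nor $\bar x$ itself) can lie on $P$; since $z$ can only enter a $\bar x$-cornered sub-subdiamond of depth $m_0-1$ through such a vertex, this yields the stated constant. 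In the example above it is exactly this $x$-side comparison ($\left(\frac12+\ep\right)+\left(\frac12+\ep\right)^2+\left(\frac12+\ep\right)^3>1+\left(\frac12+\ep\right)^3$) that kills $v_2$, and hence $z$. So to prove (i) as stated you must bring in the edge $x\bar x$ and the portion of $P$ before $x$, not just the $yz$-subpath and the diagonal $y\bar x$.

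A secondary point: in (ii) and (iii) the inequality you actually need is the complementary regime of \eqref{m0}: for a vertex adjacent to $y$ by an edge of relative level $j\le m_0-1$ lying on $P$, the staircase from $x$ that \emph{avoids} the edge $xy$ is at most as long as the route through $xy$, which contradicts minimality of $P$; your stated principle (depth $\ge m_0$ implies the route \emph{via} the diagonal is shorter) is the opposite direction. Your appeal to Case 1 of the proof of Proposition~\ref{P:snowflake} does supply the right comparison, so there this is a matter of wording rather than substance.
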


\begin{proof} {\bf (i)} Let $z$ is in the subdiamond with diagonal $y\bar
x$. Observe that ends of edges of length
$\le\left(\frac12+\ep\right)^{m_0+t-1}$ with one end at $\bar{x}$
and the other end in the subdiamond with the diagonal $\bar{x}y$
cannot be in $P$ because then, by \eqref{m0}, the path through
$\bar{x}$ would be shorter. Therefore \remove{

the coefficient of $\pi_{t,y}$ in the projection of $S_nz$ on
$Y_t$ is at least $\left(\frac12\right)^{m_0-1}$.

If , then, by \eqref{m0},}
\begin{equation*}%\label{Snwp}
S_n z=(1-b)S_n\bar{x} +bS_ny + \bar z_t,
\end{equation*}
where  $S_ny \in B_{t}\DEF\Span\{x_j: j\le\sum_{k=1}^t
k\}=\Span(\bigcup_{m=1}^t Y_t)$, $S_n\bar x\in B_{t-1}$,  $\bar
z_t\in T_t\DEF\Span\{x_j: j>\sum_{k=1}^t k\}$, and $b\ge
\left(\frac12\right)^{m_0-1}$. Note that
\begin{equation}\label{Sny}
S_ny=\frac12(S_nx+S_n\bar x)+\pi_{t,y},
\end{equation}
where  $S_nx, S_n\bar x \in B_{t-1}$ and $\pi_{t,y}\in Y_t$. Hence
\begin{equation*}%\label{Snw}
S_nz|_{Y_t}=b\pi_{t,y},
\end{equation*}
the conclusion follows.
\medskip

\noindent{\bf (ii)} If  $z$ is in the subdiamond with diagonal
$yx$, since $yx$ is a part of a shortest path, we conclude that
 the longest edge in the part of the path $P$ from $y$ to $z$ has length
$\le\left(\frac12+\ep\right)^{t+m_0}$, where $m_0$ satisfies
\eqref{m0}. Thus
\begin{equation}\label{Snz1}
S_n z= aS_ny +(1-a)S_nx + z_t,
\end{equation}
where $S_ny \in B_t$, $S_nx \in B_{t-1}$,  $z_t\in T_t$, and
\begin{equation*}%\label{heighta}
1\ge a \ge 1- \sum_{k=m_0}^\infty\left(\frac12\right)^{k}=
1-\left(\frac12\right)^{m_0-1}.
\end{equation*}

By \eqref{Sny} we get
\[(S_ny-S_nz)|_{Y_t}=\pi_{t,y}-a\pi_{t,y}=(1-a)\pi_{t,y},\]
the conclusion follows.\medskip

\noindent{\bf (iii)} If $w$ is in the subdiamond for which $xy$ is
the diagonal, similarly as in \eqref{Snz1}, we obtain
$$S_n w= cS_nx +(1-c)S_ny + w_t,$$
where $S_nx\in B_{t-1}, S_ny \in B_t$, $w_t\in T_t$, and $ 1\ge
c\ge  1-\left(\frac12\right)^{m_0-1}$. By \eqref{Sny},
\[S_nw|_{Y_t}=(1-c)\pi_{t,y},\]
and we are done in this case.

\noindent{\bf (iv)} If $w$ is not in the subdiamond for which $xy$
is the diagonal, let $q\in W_t\setminus W_{t-1}$, $q\ne y$, be the
vertex which is an endpoint of an edge of length
$\left(\frac12+\ep\right)^{t}$ which is a diagonal of the
subdiamond that contains $w$. By construction, the projection of
$S_nw$ onto the subspace $Y_t$ is a multiple of
$y_{t,\sigma_t(q)}\ne y_{t,\sigma_t(y)}$,  with some coefficient
$\varrho_4\in[0,1]$.\end{proof}

\remove{ Thus, by \eqref{Snz2} and Lemma~\ref{L:apart},
\begin{equation*}%\label{big2}
\begin{split}
\|S_nz-S_nw\|&\ge \frac1{2C} \|(S_nw-S_nz)|_{Y_t}\|\\
 &= \frac1{2C} \left\|\g\e\left(\frac12+\e\right)^{t-1} y_{t,\s_t(y)}-\al y_{t,\s_t(q)}\right\|\\
&\ge
\frac1{2C}\frac{\de}2\max\left(\g\e\left(\frac12+\e\right)^{t-1},\al\right)
\\&\ge  \frac1{4C}\e\de\left(\frac12+\e\right)^{t-1}\g_1(\e),
\end{split}
\end{equation*}
which finishes the proof.}

Observe that Lemma \ref{L:Try} implies the estimate for the
Lipschitz constant of $S_n^{-1}$, and thus Theorem \ref{T:EmbAny},
in all of the cases except the case where both {\bf (i)} and {\bf
(iii)} hold. Consider, for example the case where {\bf (i)} and
{\bf (iv)} hold. Then (we use \eqref{E:dAbove},
\eqref{E:FunctBelow}, the conclusions of {\bf (i)} and {\bf (iv)},
the definition of $\pi_{t,y}$ and \eqref{moreover})
\[\begin{split}
\frac{d_{W_n}(w,z)}{||S_nw-S_nz||}&\le\frac{2\left(\frac12+\ep\right)^t}{\left(\frac12-\ep\right)\frac1{2C}||\varrho_1\pi_{t,y}-\varrho_4
y_{t,k}||}\\
&\le\frac{4C\left(\frac12+\ep\right)^t}{\left(\frac12-\ep\right)\frac{\delta}2\varrho_1\ep\left(\frac12+\ep\right)^{t-1}}\le
\frac{8C\left(\frac12+\ep\right)}{\left(\frac12-\ep\right)\delta\ep\left(\frac12\right)^{m_0-1}},
\end{split}\]
and this number depends only on $C$ and $\ep$.\medskip

 It remains to consider the  case when both {\bf (i)} and {\bf
(iii)} hold. In this case we estimate from below the norm of
$(S_nw-S_nz)|_{Y_{t-1}}$. We use $S_n z=(1-b)S_n\bar{x} +bS_ny +
\bar z_t$ and $S_n w= cS_nx +(1-c)S_ny + w_t$ with $1\ge b\ge
\left(\frac12\right)^{m_0-1}$ and $1\ge c \ge
1-\left(\frac12\right)^{m_0-1}$. The value of $b$ actually does
not matter for our argument, it is only important that $0\le b\le
1$. Recall also that $S_ny=\frac12(S_nx+S_n\bar{x})+\pi_{t,y}$.

Therefore
\begin{equation}\label{E:t-1Est}
(S_nz-S_nw)|_{Y_{t-1}}=\left.\left(\left(1-\frac12b-\frac{1-c}2\right)S_n\bar{x}-
\left(c+\frac{1-c}2-\frac12b\right)S_nx\right)\right|_{Y_{t-1}}\end{equation}
Observe that each of the coefficients of $S_n\bar{x}$ and $S_nx$
in this sum is at least
\[\left(\frac12-\left(\frac12\right)^{m_0}\right).\] There are  two possible cases:

(A) $x\in W_{t-1}\backslash W_{t-2}$, $\bar{x}\in W_{t-2}$;

(B) $\bar{x}\in W_{t-1}\backslash W_{t-2}$, $x\in
W_{t-2}$.\medskip

The cases are similar, so we consider the case (A) only. Let $o\in
W_{t-2}$ be such that $x$ is in the subdiamond with diagonal
$o\bar{x}$, so $S_n\bar{x}, S_no\in B_{t-2}$,
\[S_nx=\frac12(S_n\bar{x}+S_n o)+\e\left(\frac12+\e\right)^{t-2}y_{t-1,\s_{t-1}(x)},\]
and
\[(S_nz-S_nw)|_{Y_{t-1}}=-\left(c+\frac{1-c}2-\frac12b\right)\e\left(\frac12+\e\right)^{t-2}y_{t-1,\s_{t-1}(x)}\]
We get

\[\begin{split}\frac{d_{W_n}(w,z)}{||S_nw-S_nz||}&\le
\frac{2\left(\frac12+\ep\right)^t}{\left(\frac12-\ep\right)\frac1{2C}\left(c+\frac{1-c}2-\frac12b\right)\e\left(\frac12+\e\right)^{t-2}}\\
&\le\frac{4C\left(\frac12+\ep\right)^2}{\left(\frac12-\ep\right)\ep\left(\frac12-\left(\frac12\right)^{m_0}\right)}.
\end{split}\]
The obtained number depends only on $C$ and $\ep$. This concludes
the proof.
\end{proof}

\section{More general examples}\label{S:coral}

The goal of this section is to generalize the results of Section
\ref{S:SnowDiam} to more general `hierarchically built weighted
graphs', which we denote  $\{G_i\}_{i=0}^\infty$ and  call  {\it
corals} because they are more chaotic than diamonds.

\begin{definition}\label{D:coral}
We pick  $\la\in\left(\frac12,1\right)$ and a sequence $\{N_i\}_{i=0}^\infty$ of natural numbers so that $N_0=2$ and $N_i\ge 1$ for all $i\ge 1$. The sequence
$\{G_n\}_{n=0}^\infty$ of {\it corals} is defined inductively.
Vertices and edges of a coral come in {\it generations}   denoted $\{V_i\}_{i=0}^\infty$
and $\{E_i\}_{i=0}^\infty$, respectively. We proceed as follows  (see Figure~\ref{coral1} for a sample graph $G_1$):

\begin{itemize}

\item $G_0$ is the same as $D_0$, i.e. $V_0$ consists of two vertices $v_0, v_1$ which are joined by one edge of
weight $1$. Thus $G_0=(V_0,E_0)$, where $|V_0|=2$, $|E_0|=1$.

\item Suppose that $\bigcup_{i=0}^kV_i$, $\bigcup_{i=0}^kE_i$,  and $G_k$ have been already
defined. Let $V_{k+1}$ be a  set of
cardinality $N_{k+1}$, disjoint with $\bigcup_{i=0}^kV_i$. The vertex set of
the graph $G_{k+1}$ is $\bigcup_{i=0}^{k+1}V_i$. The set $E_{k+1}$ of new
edges  is a subset of edges joining the
vertices of $V_{k+1}$ with $\bigcup_{i=0}^kV_i$. Every edge in $E_{k+1}$ is is given
weight $\la^{k+1}$. Edges in $E_{k+1}$ are chosen so that each vertex in
$V_{k+1}$ has degree $1$ or $2$ and if a vertex $v\in V_{k+1}$ has degree $2$ then it is adjacent to vertices $u,w\in
\bigcup_{i=0}^kV_i$ which  are joined by an edge $uw$  in $E_k$, i.e. $uw$ is of
length $\lambda^k$ in $G_k$.
\begin{figure}[h]
\centering
\includegraphics{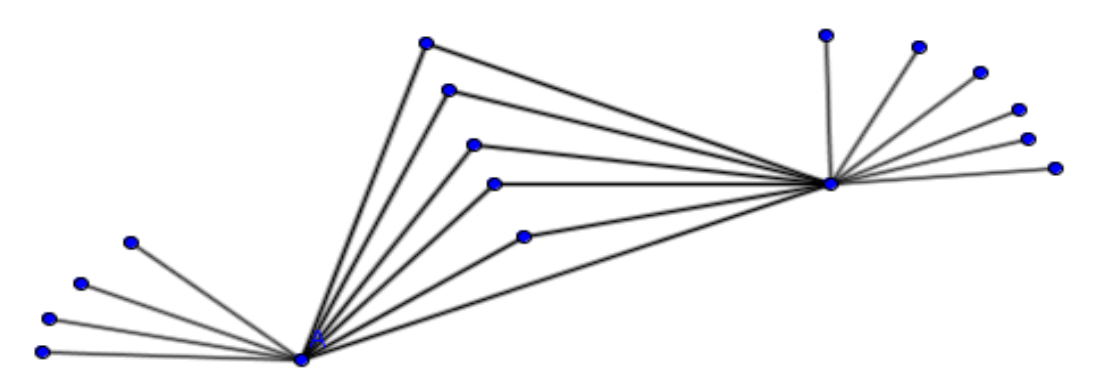}
\caption{Sketch of $G_1$}\label{coral1}
\end{figure}
\end{itemize}
\end{definition}

\begin{remark}
The graph $G_n$ depends on $\lambda$,   the numbers
$N_1,N_2,\dots,N_n$, and  on the  choices that we make when we attach
new vertices to already existing. For brevity, we do not reflect these dependencies in the notation for $G_n$.
\end{remark}

Note that a {\it coral} can be regarded as a chaotically branching
snowflaked diamond in which we allow attaching smaller diamonds to
vertices of larger diamonds. In particular the weighted graph
$W_n$ is an example   of a very regular  coral. Also one can think
of the coral as constructed in a fractal-like fashion, where we
start from  two vertices joined by one edge. On the next step we
replace the one edge by a copy of $G_1$ (see Figure~\ref{coral1})
so the set of edges is now $E_0\cup E_1$, where every edge in
$E_1$ has  length $\la$. We now replace every edge in $E_1$  by a
scaled (by $\la$) version of $G_1$, obtaining set $E_2$ of
additional edges  of length $\la^2$. We continue for arbitrary
number of generations. The   difference between this procedure and
a true fractal is that every scaled copy of $G_1$ can have
different number of vertices and edges, so the final graph can be
very chaotic (see Figure~\ref{coral2}).

\begin{figure}[h]
\centering
\includegraphics{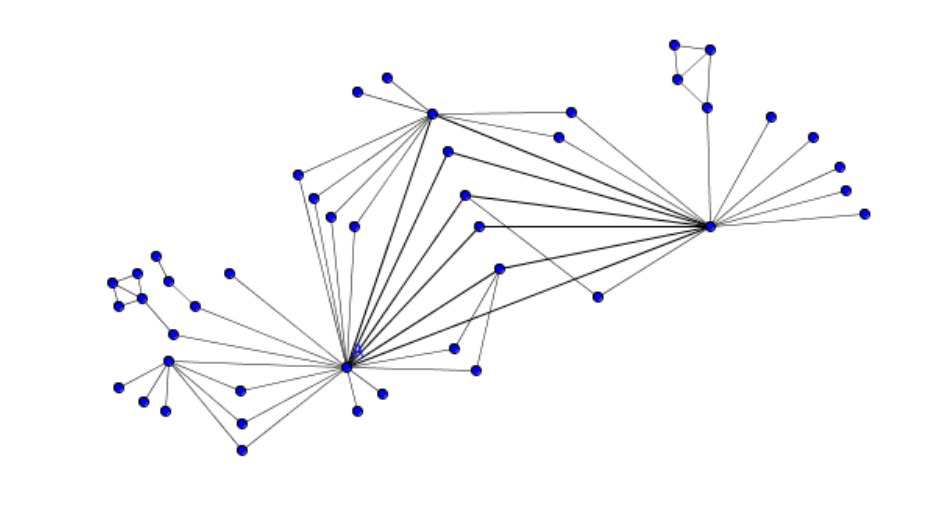}
\caption{An example of a coral with a few
generations}\label{coral2}
\end{figure}

Our goal is to prove that Corollary~\ref{cor:anyemb-old} can be generalized for
corals.

We introduce the following function $L:\mathbb{N}\to \mathbb{N}$,
\[L(i)=\begin{cases} 1 &\hbox{ if } i=1,2\\
2 &\hbox{ if } i=3,4\\
\lceil\log_4 i\rceil &\hbox{ if } i\ge 5.
\end{cases}\]
$L(i)$ shows the dimension  which is sufficient to accommodate $i$
$\de$-separated points, for $\de=1/16$,  in the unit sphere, cf. Lemma~\ref{L:apart}.

\begin{theorem}\label{T:GenTwoWay}
 Let $C\ge 1$ and $\la\in (1/2,1)$. Then there exists a constant $D=D(C,\la)$, so that every   coral $G_n$ with parameters $\la$ and $\{N_i\}_{i=0}^n\subset\bbN$, $D$-embeds into any   Banach space   $X$  which contains a basic sequence of length
$\sum_{i=0}^n L(N_i)$ with basis constant $\le C$.
\end{theorem}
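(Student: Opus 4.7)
The plan is to adapt the embedding and analysis from Theorem~\ref{T:EmbAny} to the more general coral structure, supplemented by a generalization of Lemma~\ref{C:le2}. Given a basic sequence $\{x_j\}_{j=0}^{M-1}$ in $X$ of length $M=\sum_{i=0}^n L(N_i)$ and basis constant at most $C$, I would split it into consecutive blocks spanning subspaces $Y_0,\ldots,Y_n$ with $\dim Y_i = L(N_i)$. The function $L$ is calibrated so that Lemma~\ref{L:apart} (together with the strengthening \eqref{moreover}) furnishes in each $Y_i$ a family of $N_i$ unit vectors $\{y_{i,k}\}_{k=1}^{N_i}$ at mutual distance at least $\delta=1/16$. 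After fixing arbitrary bijections $\sigma_k\colon V_k\to\{1,\ldots,N_k\}$, I would define $S\colon V(G_n)\to X$ generation by generation: set $S(v_0)=0$, $S(v_1)=x_0$, and for each $v\in V_k$ put
\[
S(v)=\begin{cases}
S(u)+\lambda^k\, y_{k,\sigma_k(v)} & \text{if $v$ has degree $1$ and is attached to $u$,}\\
\tfrac12\bigl(S(u)+S(u')\bigr)+\bigl(\lambda-\tfrac12\bigr)\lambda^{k-1}\, y_{k,\sigma_k(v)} & \text{if $v$ has degree $2$ and is attached to $u, u'$.}
\end{cases}
\]
A direct edge-by-edge calculation, imitating its $W_n$ counterpart, gives $\|S(u)-S(v)\|\le\lambda^k$ for every edge $uv\in E_k$, so $S$ is $1$-Lipschitz on $G_n$.

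The key new ingredient is the following generalization of Lemma~\ref{C:le2}: for $\lambda\in(1/2,1)$, every shortest path in $G_n$ uses at most two edges of each generation. Suppose to the contrary that an interior vertex $p\in V_k$ of some shortest path is a strict peak in the vertex-generation sequence, meaning that its two path-neighbors $p^-,p^+$ both lie in $V_{<k}$. The only edges incident to $p$ that reach $V_{<k}$ are the (at most two) edges of $E_k$ created when $p$ was added, so $p^-,p^+$ must be exactly its two ancestors; in particular $p$ has degree $2$ and $p^-p^+\in E_{k-1}$. But then $p^-\to p\to p^+$, of length $2\lambda^k$, can be replaced by the direct edge $p^-p^+$, of length $\lambda^{k-1}$, which is strictly shorter since $\lambda>1/2$---contradiction. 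Hence the vertex-generation sequence along any shortest path has no strict peak; since consecutive vertices on a path share an edge and therefore belong to different generations, the sequence must strictly decrease and then strictly increase (a ``valley''). Writing $k_i=\max(h_i,h_{i+1})$ for the generation of the $i$-th edge, the edge-generation sequence therefore strictly decreases on the left half and strictly increases on the right half of the valley, so each value is attained at most twice. In particular $d_{G_n}(w,z)\le 2\lambda^t/(1-\lambda)$ whenever $\lambda^t$ is the length of the longest edge on a shortest $wz$-path.

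With the structural lemma in hand, the bi-Lipschitz lower bound follows the template of the proof of Theorem~\ref{T:EmbAny}. For any $w,z\in V(G_n)$, let $\lambda^t$ be the longest edge on a shortest $wz$-path and fix an edge $xy\in E_t$ on that path with $y\in V_t$. The basis-constant inequality yields $\|S(w)-S(z)\|\ge(2C)^{-1}\|\pi_{Y_t}(S(w)-S(z))\|$, where $\pi_{Y_t}$ is the block projection onto $Y_t$. The direct analogue of Lemma~\ref{L:Try}, now accounting for the possibility that $y$ is a degree-$1$ leaf (giving coefficient $\lambda^t$) in addition to the degree-$2$ case (coefficient $(\lambda-\tfrac12)\lambda^{t-1}$), and invoking the separation property~\eqref{moreover} of the vectors $\{y_{t,k}\}_k$, yields $\|\pi_{Y_t}(S(w)-S(z))\|\ge c(\lambda)\lambda^t$ for a constant $c(\lambda)>0$. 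Combining with $d_{G_n}(w,z)\le 2\lambda^t/(1-\lambda)$ gives the required distortion bound $D=D(C,\lambda)$. The principal obstacle is the structural lemma: since corals permit completely irregular branching and freely mix degree-$1$ leaves with degree-$2$ parallel attachments, none of the self-similar subdiamond arguments of \cite{Ost14} is available, and it is the ``no strict peak'' observation---which only uses $\lambda>1/2$---that makes the rest of the argument go through.
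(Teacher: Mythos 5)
Your construction of the embedding and the $1$-Lipschitz estimate coincide with the paper's, and your proof of the structural lemma (the analogue of Lemma~\ref{C:le2rep}) via the ``no strict peak'' observation is correct and in fact cleaner than the paper's subcoral argument: since every edge of $E_k$ with $k\ge1$ joins a $V_k$-vertex to a strictly older vertex, a peak would be a degree-$2$ vertex whose two ancestors are joined by an edge one generation older, and $\lambda^{k-1}<2\lambda^k$ kills it; the only point you gloss over is the single $E_0$ edge joining two $V_0$-vertices, which is harmless.

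The genuine gap is in the lower bound, precisely where the paper has to work hardest. You assert that the analogue of Lemma~\ref{L:Try} together with \eqref{moreover} gives $\|\pi_{Y_t}(S(w)-S(z))\|\ge c(\lambda)\lambda^t$, but the lemma does not yield this in the case corresponding to (i) and (iii) holding simultaneously, i.e.\ when $w$ (or rather $\bar w$) lies in the subcoral with diagonal $xy$ while $\bar z$ lies in the subcoral with diagonal $\bar x y$. In that case both $Y_t$-components are \emph{nonnegative multiples of the same vector} $\pi_{t,y}$, with coefficients lying in $[0,(1/2)^{m_0-1}]$ and $[(1/2)^{m_0-1},1]$ respectively; these ranges touch, so the $Y_t$-projection of the difference can be arbitrarily small (even zero), and no separation property of the $y_{t,k}$'s helps because only one of them is involved. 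The paper's proof handles exactly this case by a separate argument: it passes to the projection onto $Y_{t-1}$, writes $T\bar z=(1-b)T\bar x+bTy+\bar z_t$ and $T\bar w=cTx+(1-c)Ty+w_t$, uses $Ty=\frac12(Tx+T\bar x)+\pi_{t,y}$ to see that the coefficients of $Tx$ and $T\bar x$ in $(T\bar z-T\bar w)|_{Y_{t-1}}$ are at least $\frac12-(\frac12)^{m_0}$, and then splits into subcases according to whether $x$ was attached with degree $2$ (contribution $(\lambda-\frac12)\lambda^{t-2}y_{t-1,\sigma_{t-1}(x)}$) or degree $1$ (contribution $\lambda^{t-1}y_{t-1,\sigma_{t-1}(x)}$), the latter subcase being specific to corals. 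Your sketch contains no substitute for this step, and without it the distortion bound does not follow. A smaller omission in the same direction: in a coral a shortest path may leave the two subcorals through a degree-$1$ attachment point, which is why the paper introduces the auxiliary vertices $\bar w,\bar z$ before stating Lemma~\ref{L:Try2}; this reduction also needs to be made explicit.
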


\begin{remark}\label{R:RelWthW_n}
Note that in the case when $G_n=W_n$,  Theorem~\ref{T:GenTwoWay}
reduces to Theorem~\ref{T:EmbAny}.
\end{remark}

\begin{remark}\lb{referee} Analogs
of Theorem~\ref{Wndoesnotembed} and Proposition~\ref{P:EucNoLog}
do not hold for some families of corals. In fact, certain families
of corals can embed in low dimensional Euclidean spaces, for
example a family consisting of a triangle with a progressively
longer tails embeds into $\bbR$ with uniformly bounded
distortions.
\end{remark}

For the proof of Theorem~\ref{T:GenTwoWay} we will need an
analogue of Lemma~\ref{C:le2}.

\begin{lemma}[{This is a version of \cite[Claim 4.1]{Ost14}}]\label{C:le2rep}
 A shortest path between two vertices in $G_n$ can contain edges of each possible
length: $1,\lambda,\lambda^2,\lambda^3, \dots$ at most twice.
Actually for $1$ this can happen only once because there is only
one such edge. If there are two longest edges, they are adjacent.
\end{lemma}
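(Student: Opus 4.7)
The proof follows the structure of Lemma~\ref{C:le2}, but since the coral construction is more flexible than $W_n$ (a vertex in $V_{k+1}$ may be attached to any edge of $E_k$, and degree-$1$ attachments are permitted), the notion of ``subdiamond'' does not transfer cleanly. Instead, I will establish a monotonicity subclaim about the edges on the ``newer side'' of any edge of a shortest path.

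The case $k=0$ is immediate, since $G_n$ has a unique edge of weight $1$. For $k \ge 1$, let $e = uv$ be a longest edge of the shortest path $P$, with $v \in V_k$ the newer endpoint and $u \in \bigcup_{i<k} V_i$. I plan to prove the following subclaim: \emph{for any edge $xy \in P$ with $y \in V_j$ the newer endpoint, the sequence of edges on the $y$-side of $P$ (going away from $x$) has strictly decreasing lengths.} The proof is by induction. Let $yz$ be the next edge from $y$ in $P$. If $z \in V_{j'}$ for some $j' > j$, then $yz$ has length $\la^{j'} < \la^j$, and the induction continues. Otherwise $z \in \bigcup_{\ell<j} V_\ell$ and $yz \in E_j$ is one of $y$'s birth edges; since $P$ is simple, $z \ne x$, which forces $y$ to have been a degree-$2$ vertex with $z$ the second vertex to which $y$ was attached, so that $xz \in E_{j-1}$ has length $\la^{j-1}$. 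But then the detour $x \to y \to z$ of length $2\la^j$ exceeds the direct edge $xz$ of length $\la^{j-1}$ (since $\la > \frac{1}{2}$), contradicting that $P$ is shortest.

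From the subclaim the ``at most twice'' statement follows at once: three edges $e_1,e_2,e_3$ in $P$ of a common length $\la^l$ (in this order along $P$) would place $e_1$ or $e_3$ on the newer side of the middle edge $e_2$, which the subclaim forbids. For the adjacency of the two longest edges: suppose $e_1 = u_1v_1$ and $e_2 = u_2v_2$ both have the maximal length $\la^k$ and occur in $P$ in that order, with $v_i$ the newer endpoint of $e_i$. The subclaim forces $v_1$ and $v_2$ to lie on the outer sides, with $u_1, u_2$ on the intermediate segment of $P$ between them. The edges of this segment are strictly shorter than $\la^k$ (using the ``at most twice'' conclusion together with the maximality of $\la^k$), and the segment cannot reduce to a single direct edge $u_1u_2$, since both $u_i$ sit at levels below $k$ while any edge of length less than $\la^k$ needs an endpoint at a level above $k$. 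Applying the subclaim to the first edge of this segment (from $u_1$, going towards $u_2$) and to the last edge (from $u_2$, going towards $u_1$) now yields two strictly decreasing length-chains over the same edges in opposite directions, forcing the first segment-edge to be both strictly shorter and strictly longer than the last; contradiction. Hence $u_1 = u_2$, and $e_1, e_2$ are adjacent.

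The main subtlety I expect is to formulate the subclaim in enough generality (for every edge of $P$, not just a preselected longest one) and to execute the two-directional monotonicity argument on the intermediate segment between the two longest edges. As in the proof of Lemma~\ref{C:le2}, the inequality $2\la > 1$ is what drives every detour contradiction.
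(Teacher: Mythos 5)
Your proposal is correct, and it establishes the lemma by a mechanism different from the paper's. The paper introduces subcorals (Definition~\ref{D:subcoral}) and, exactly as in Lemma~\ref{C:le2}, anchors the argument at a longest edge $e=uv$: the piece of the path starting at the newer endpoint $v$ can never leave the subcoral grown from the parent edge (escaping through the far end of the diagonal would let the diagonal, of length $\lambda^{k-1}<2\lambda^{k}$, serve as a shortcut), hence that piece uses only strictly shorter edges, with the degree-$1$ attachment treated as a separate case and the adjacency statement read off at the end. You instead dispense with subcorals altogether and prove a purely local monotonicity statement for \emph{every} edge of $P$, using only the birth structure of vertices: a shortest path can never use both birth edges of a vertex $y\in V_j$, because the two-edge detour of length $2\lambda^{j}$ through $y$ is strictly longer than the parent edge of length $\lambda^{j-1}$, which is the same inequality $2\lambda>1$ that drives the paper's containment argument. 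This buys a uniform treatment of degree-$1$ and degree-$2$ attachments and a statement applicable to every edge of the path, which you then use for both the multiplicity bound and the adjacency claim. Two small points. First, when you apply the subclaim to the first and last edges of the intermediate segment, you implicitly assume they are crossed from their older to their newer endpoint; this does need a word, but the same maximality that you already use supplies it: if the first segment edge went from $u_1$ to an older vertex it would be a birth edge of $u_1$, of length $\lambda^{j_1}>\lambda^{k}$ where $j_1<k$ is the level of $u_1$, contradicting that $\lambda^{k}$ is the longest edge length in $P$. Second, once this orientation is noted, your adjacency argument can be shortened: applying the subclaim to the first segment edge alone already gives a contradiction, since $e_2$ lies beyond it in that direction and is longer, so the two-directional decreasing chains and the separate exclusion of a one-edge segment are not needed (though they are correct as written).
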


The proof of this lemma is a slightly modified version of the
proof of Lemma~\ref{C:le2}. We start with a definition of a notion
analogous to the notion of a subdiamond.

\begin{definition}\label{D:subcoral}
We define a {\it (degree 2) subcoral}  of a coral $G_n$ grown out
of an edge $uv$ to be the subgraph of $G_n$ induced by the set of
vertices containing $u,v$ and viewed as constructed in steps such
that the following conditions are satisfied:

(1) All vertices except $u$ and $v$ can be included into the
subcoral only if they have degree $2$ when they appear for the
first time;

(2) All vertices which have degree $2$ when they appear, with both
ends in the subcoral, get into the subcoral.

The edge from which a subcoral evolved is called its {\it
diagonal}.
\end{definition}

\begin{proof}[Proof of Lemma~\ref{C:le2rep}]
 Let $e=uv$ be one of the longest edges
in the path and $\lambda^k$ be its length.

For each   edge of the graph $G_n$ except the initial edge, one of
the ends was introduced later. Assuming that $e$ is not the
initial edge of the graph, we may assume that the vertex $v$ was
introduced later than $u$.

There are two cases:

(1) The vertex $v$ was attached to two vertices of an edge $uw$.
Let $S$ be the subcoral which evolved from $uw$, so $S$ contains
$e$ and has a diagonal of length $\lambda^{k-1}$

(2) The vertex $v$ was attached to the vertex $u$ only.
\medskip

The rest of the path consists of two pieces: (1) The one which
starts at $v$; (2) The one which starts at $u$.

We claim that in the case (1) the part which starts at $v$ can
never leave $S$. It obviously cannot leave through $u$, it cannot
leave through the $w$ also, because otherwise the piece of the
path between $u$ and $w$ could be replaced by the diagonal of $S$,
which is strictly shorter.\medskip

This implies that the part of the path in $S$ which starts at $v$
can contain edges only strictly shorter than $\lambda^k$.
\medskip

The same is true in the case (2) because only vertices of further
generations will be attached to $v$ in this case and they are
attached using edges of length $\le\lambda^{k+1}$.
\medskip

For the next edge in the part of the path which starts at $v$ we
can repeat the argument and get (by induction) that lengths of
edges in the remainder of the path are strictly
decreasing.\medskip

The part of the path which starts at $u$ can be considered
similarly.
\medskip

The last statement of the Lemma is immediate from the proof.
\end{proof}

\begin{proof}[Proof of Theorem \ref{T:GenTwoWay}]
\remove{Let $C\ge 1$ and $\la\in (1/2,1)$. Then there exists a
constant $D=D(C,\la)$, so that every   coral $G_n$ with parameters
$\la$ and $\{N_i\}_{i=0}^n\subset\bbN$, $D$-embeds into any Banach
space   $X$  which contains a basic sequence of length
$\sum_{i=0}^n L(N_i)$ with basis constant $\le C$.} The proof is
very similar to the proof of Theorem~\ref{T:EmbAny}.

Let $L=\sum_{i=0}^n L(N_i)$,   $X$ be a Banach space and  $\{x_i\}_{i=0}^{L-1}$ be a basic sequence in
$X$ with $\|x_i\|=1$ for all $i$, and a basis constant $\le C$.
Let $Y_0=\Span\{x_0\}$, and
$Y_m=\Span\{x_j :
\left(\sum_{k=0}^{m-1}L(k)\right) +1 \le j \le \sum_{k=0}^{m}L(k)\}$,
for $m=1,\dots,n$. Thus $\dim Y_m=L(N_m)$ for $m=0,\dots,n$. Let
$\{y_{m,k}\}_{k=1}^{N_m}$ be elements of the unit sphere of $Y_m$
satisfying the conditions of Lemma~\ref{L:apart} with $\de=1/16$
(observe that the definition of $L(N_m)$ is such that it is always
possible). Note that for $m> m'$, $y_{m,k}$ and $y_{m',k'}$ are
supported on disjoint intervals with respect to the basis
 $\{x_i\}$, hence
\begin{equation}\label{E:big2}
\|y_{m,k}-y_{m',k'}\|\ge\frac1{C}\|y_{m',k'}\|=\frac1{C}.
\end{equation}

We construct an embedding $T:G_n\to X$ in the following way. We
define it in steps for vertices of $V_0,V_1,\dots,V_n$

\begin{itemize}

\item The map $T$ maps the two vertices of $V_0$  to $0$ and
$x_0$, respectively. It is clear that it is an isometric
embedding.

\item Suppose that we have already constructed the restriction of
$T$ to $\bigcup_{i=0}^{m-1}V_i$. Our next step is to extend $T$ to
$V_m$. Observe that our notation is such that there exists a
bijection between $V_m$ and $\{y_{m,k}\}_{k=1}^{N_m}$. Let $w\in V_m$. We denote
the vector corresponding to a vertex $w$ by $y_{m,\sigma_m(w)}$.
Then we define $Tw$ as follows

\begin{itemize}

\item if the vertex $w$ is attached to two vertices $u, v\in \bigcup_{i=0}^{m-1}V_i$, we
let
\begin{equation}\label{E:DefTdeg2}
Tw=\frac12(Tu+Tv)+\left(\lambda-\frac12\right)\lambda^{m-1}y_{m,\s_m(w)}.
\end{equation}

\item  if the vertex $w$ is attached to one vertex $u\in \bigcup_{i=0}^{m-1}V_i$, we let
\begin{equation}\label{E:DefTdeg1}
Tw=Tu+\lambda^my_{m,\s_m(w)}.
\end{equation}

\end{itemize}
\end{itemize}

Now we estimate the distortion of $T$. First we show that the map
$T$ is $1$-Lipschitz. This can be proved for $T|_{G_m}$ by
induction on $m=0,1,\dots,n$ (observe that the metric induced on
$G_m$ from $G_{m+1}$ coincides with the metric of $G_m$). It
suffices to prove that for each $w\in V_{m}$ and each edge $uw$
in $G_{m}$ we have $d_W(u,w)=\lambda^{m}$ and
$||Tu-Tw||\le\lambda^{m}$.

The equality $d_W(u,w)=\lambda^{m}$
follows immediately from our definitions.
To prove that
$||Tu-Tw||\le\lambda^{m}$, we need to consider two cases: (a) $w$
has degree $1$ in $G_{m}$; (b) $w$ has degree $2$ in $G_{m}$.

Since $\|y_{m,\s_m(w)}\|=1$,  the desired inequality in the case
(a) follows immediately  from \eqref{E:DefTdeg1}. In the case (b)
we get
\[||Tu-Tw||\le
\frac12||Tu-Tv||+\left(\lambda-\frac12\right)\lambda^{m-1}\le\lambda^{m},\]
where for the last inequality we use the assumption that $uv$ is
an edge in $G_{m-1}$ and therefore $||Tu-Tv||\le\lambda^{m-1}$.

To estimate  the Lipschitz constant of $T^{-1}$ from above we
consider any shortest path $P$ between two vertices $w,z$ in
$G_n$. Let $\lambda^t$ be the length of the longest edge in it. By
Lemma~\ref{C:le2rep},
\begin{equation}\label{E:dAbove2}
d_W(w,z)\le
2\lambda^t\left/\left(1-\lambda\right)\right..
\end{equation}

On the other hand, since the subspaces $Y_m$ are supported on
disjoint intervals with respect to the basis $\{x_i\}$, we have
for every $m\in\{0,1,\dots,n\}$,
\begin{equation}\label{E:FunctBelow2}
||Tw-Tz||\ge \frac1{2C} \|(Tw-Tz)\big|_{Y_m}\|,
\end{equation}
 where by $x|_{Y_m}$ we denote the
natural projection of $x$ onto $Y_m$.

 Let $m_0\in
\mathbb{N}$ be the smallest number is such that
\begin{equation}\label{m02} \lambda+\lambda^2+\dots+\lambda^{m_0}> 1+\lambda^{m_0}.\end{equation} Such number
$m_0$ obviously exists since $\lambda>\frac12$. It is clear that
$m_0\ge 3$ since $\lambda<1$, and that $m_0$ depends only on
$\lambda$.

\remove{ Note that if $xy$ is an  edge of length $\lambda^t$, then
  \begin{equation}\label{oneedge}
\|S_nx-S_ny\|\ge \frac1{2C}\e\lambda^{t-1}.
\end{equation}

Indeed, this is obvious if $t=0$. If $t\ge 1$, one of the
vertices, say $y$, is of a later generation than the other vertex.
Then $y\in W_t\setminus W_{t-1}$, $x\in W_{t-1}$, and there exists
a vertex $\bar x\in W_{t-1}$ so that $xy$ is an edge in a
subdiamond with diagonal $\bar xy$, which is of length
$\lambda^{t-1}$ and
$$S_ny=\frac12(S_nx+S_n\bar x)+\e\lambda^{t-1}y_{t,\s_t(y)},$$
where $S_nx, S_n\bar x\in\Span\{\bigcup_{m<t} Y_m\}$ and
$y_{t,\s_t(y)}\in Y_t$. Thus \[\begin{split}\|S_nx-S_ny\|&\ge
\frac1{2C} \left\|(S_nx-S_ny)|_{Y_m}\right\|\\&= \frac1{2C}
\left\|\e\lambda^{t-1}y_{t,\s_t(y)}\right\|\\&=\frac1{2C}\e\lambda^{t-1}.\end{split}\]}

Now we turn to estimates of $||Tw-Tz||$ from below. Let $xy$ be
one of the edges of the largest length $\lambda^t$ in the path $P$
from $w$ to $z$ (by Lemma~\ref{C:le2rep} we know that the path $P$
contains at most two such edges; and that if there are two of
them, they share a vertex). \Buo\ we assume that $y\in V_t$ and
$x\in\cup_{i=0}^{t-1} V_i$.

{\bf (1)} In the case where $y$ is of degree $2$ in $G_t$ let
$\bar x$ be the vertex in $V_{t-1}$ so that $x\bar x$ is an edge
of the length $\lambda^{t-1}$ in $G_{t-1}$ and $\bar xy$ is an
edge of length $\lambda^t$ in $G_t$. Then the part of the path $P$
from $y$ to $z$ does not contain an edge of length $\lambda^t$.
Furthermore, some part of this path (from $y$ to $z$), starting at
$y$ (possibly all of the path from $y$ to $z$) is either in the
subcoral with diagonal $yx$, or in the subcoral with diagonal
$y\bar{x}$, and then leaves for parts of the coral which are
attached to older parts of the coral through one vertex. Let $\bar
z$ be the vertex at which this happens. We let $\bar z=z$ if this
never happens.

{\bf (2)} In the case where $y$ is of degree $1$ in $G_t$ we do
the same, but in this case the only option which is available is
the option of subcoral with the diagonal $xy$.
\medskip

Similarly we define $\bar w$. For simplicity we denote the vector
$\left(\lambda-\frac12\right)\lambda^{t-1}y_{t,\sigma_t(y)}$ by
$\pi_{t,y}$.

\begin{lemma}\label{L:Try2} {\bf (i)} If $\bar z$ is in the subcoral with the
diagonal $y\bar{x}$, then
\[Tz|_{Y_t}=T\bar z|_{Y_t}=\alpha\pi_{t,y}\] for some
$\alpha\ge\left(\frac12\right)^{m_0-1}$.

{\bf (ii)} If $\bar z$ is in the subcoral with diagonal $yx$, then
\[(Ty-Tz)|_{Y_t}=(Ty-T\bar z)|_{Y_t}=\beta\pi_{t,y}\] for some
$0\le\beta\le\left(\frac12\right)^{m_0-1}$.

{\bf (iii)} If $\bar w$ is in the subcoral with diagonal $yx$,
then
\[Tw|_{Y_t}=T\bar w|_{Y_t}=\gamma\pi_{t,y}\] for some
$0\le\gamma\le\left(\frac12\right)^{m_0-1}$.

{\bf (iv)} If $\bar w$ is not in the subcoral with the diagonal
$yx$, then
$$Tw|_{Y_t}=T\bar w|_{Y_t}=\omega y_{t,k},$$
 for some
$k\ne\sigma_t(y)$ and $\omega\in[0,1]$.
\end{lemma}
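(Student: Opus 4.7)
The plan is to follow closely the strategy of Lemma~\ref{L:Try}, with Lemma~\ref{C:le2rep} replacing Lemma~\ref{C:le2} and subcorals (Definition~\ref{D:subcoral}) replacing subdiamonds. The key structural fact is that every vertex inside a subcoral is introduced by a degree-2 attachment, so by \eqref{E:DefTdeg2} its image under $T$ is a barycentric average of the images of two earlier subcoral vertices plus a term orthogonal to all $Y_m$ with $m\le t$.

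First, I would verify the reductions $Tz|_{Y_t}=T\bar z|_{Y_t}$ and $Tw|_{Y_t}=T\bar w|_{Y_t}$. Between $\bar z$ and $z$ the path $P$ traverses sub-parts of the coral attached to previously built parts through a single vertex; by induction along such a sub-part, every vertex $v$ visited satisfies $Tv|_{Y_t}=T\bar z|_{Y_t}$, because degree-1 attachments contribute only to $Y_m$ with $m>t$ via \eqref{E:DefTdeg1}, and degree-2 attachments (entirely inside the sub-part) combine two parents whose $Y_t$-projections are already equal by the inductive hypothesis, plus an orthogonal term in $Y_m$ with $m>t$ via \eqref{E:DefTdeg2}. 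The same reasoning handles the portion between $w$ and $\bar w$.

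Next, by induction along the construction of the relevant subcoral, I would establish that for every vertex $v$ in a subcoral with diagonal $pq$ one has $Tv|_{Y_t}=a_v\,Tp|_{Y_t}+(1-a_v)\,Tq|_{Y_t}$ with $a_v\in[0,1]$. In case (i) this yields $T\bar z|_{Y_t}=\alpha\pi_{t,y}$, since $T\bar x|_{Y_t}=0$ (as $\bar x\in V_{t-1}$ is built before $Y_t$ is used) while $Ty|_{Y_t}=\pi_{t,y}$ by \eqref{E:DefTdeg2}. In case (ii) the same decomposition on the subcoral with diagonal $yx$ gives $T\bar z|_{Y_t}=a\pi_{t,y}$, whence $(Ty-T\bar z)|_{Y_t}=(1-a)\pi_{t,y}$, and one sets $\beta=1-a$. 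Cases (iii) and (iv) are handled symmetrically for $\bar w$; in (iv) the outermost degree-2 contribution to $T\bar w|_{Y_t}$ comes from the unique vertex $q\in V_t$ of the subcoral containing $\bar w$, and by construction $q\ne y$, so $T\bar w|_{Y_t}=\omega\,y_{t,\sigma_t(q)}$ with $\omega\in[0,1]$ and $\sigma_t(q)\ne\sigma_t(y)$.

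The bounds on $\alpha,\beta,\gamma,\omega$ are then obtained from the shortest-path property combined with the definition \eqref{m02} of $m_0$. In case (i), if the barycentric depth of $\bar z$ from $y$ in the subcoral with diagonal $y\bar x$ exceeded $m_0-1$, then by Lemma~\ref{C:le2rep} the portion of $P$ from $y$ to $\bar z$ would have length at least $\lambda^t(\lambda+\lambda^2+\cdots+\lambda^{m_0-1})>\lambda^t$, so rerouting via the edge $\bar xy$ followed by a shorter in-subcoral walk from $\bar x$ to $\bar z$ would produce a strictly shorter $(y,\bar z)$-walk, contradicting optimality of $P$; this forces $\alpha\ge(1/2)^{m_0-1}$. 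Case (ii) is dual: optimality of $P$ through the edge $xy$ forces $\bar z$ to remain within barycentric depth $m_0-1$ of $y$ inside the subcoral $yx$, so $\beta\le(1/2)^{m_0-1}$. Cases (iii) and (iv) then follow by the same rerouting arguments applied to $\bar w$. The main obstacle is the careful combinatorial bookkeeping that converts ``barycentric depth exceeding $m_0-1$ is suboptimal'' into the precise coefficient bound $(1/2)^{m_0-1}$ in the presence of chaotic branching of the coral; the analysis parallels the corresponding estimate in the proof of Lemma~\ref{L:Try}, relying only on Lemma~\ref{C:le2rep} and the inequality \eqref{m02}.
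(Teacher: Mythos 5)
Your overall plan coincides with the paper's: reduce to $\bar z$ and $\bar w$ via the degree-one attachment formula \eqref{E:DefTdeg1} (plus an induction over the attached parts), write $T\bar z$ and $T\bar w$ as barycentric combinations of $Tx$, $T\bar x$, $Ty$ modulo a vector supported beyond $Y_t$, and project onto $Y_t$ using $Tx|_{Y_t}=T\bar x|_{Y_t}=0$ and $Ty|_{Y_t}=\pi_{t,y}$; this mirrors Lemma~\ref{L:Try}. The genuine gap is in the step that produces the numerical bounds on the coefficients, which is the heart of the lemma. In case (i) you argue that if $\bar z$ were too deep towards $\bar x$, then the portion of $P$ between $y$ and $\bar z$ would have length greater than $\lambda^t$, so that ``rerouting via the edge $\bar x y$ followed by a shorter in-subcoral walk from $\bar x$ to $\bar z$'' would give a strictly shorter $(y,\bar z)$-walk, contradicting optimality of $P$. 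No contradiction can ever be obtained this way: $y$ and $\bar z$ both lie on the shortest path $P$, so the portion of $P$ between them already realizes $d(y,\bar z)$, while your rerouted walk has length at least $\lambda^t+d(\bar x,\bar z)\ge d(y,\bar x)+d(\bar x,\bar z)\ge d(y,\bar z)$ by the triangle inequality. In particular, your lower bound ``$>\lambda^t$'' for the original portion is never compared with any quantified upper bound for the reroute (the reroute is also longer than $\lambda^t$), so the argument does not close, and cannot be made to close in this form.

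The saving must come from bypassing $y$ altogether, which is what the paper does: if the coefficient of $Ty$ in $T\bar z$ dropped below $\left(\frac12\right)^{m_0-1}$, then $P$ would contain a vertex joined to $\bar x$ by an edge of length $\lambda^{t+s}$ with $s\ge m_0-1$; replacing the portion of $P$ from $x$ onwards by the edge $x\bar x$ (length $\lambda^{t-1}$) followed by that short edge costs $\lambda^{t-1}+\lambda^{t+s}$, whereas the portion of $P$ from $x$ to that vertex must use the edge $xy$ and then descend inside the subcoral, costing at least $\lambda^t+\lambda^{t+1}+\dots+\lambda^{t+s}$; the defining inequality \eqref{m02} of $m_0$ is exactly the statement that the first quantity is strictly smaller once $s+1\ge m_0$, which is what forbids such vertices on $P$ and yields $\alpha\ge\left(\frac12\right)^{m_0-1}$. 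The same correction is needed in your cases (ii) and (iii): there, too, optimality is violated by routes that start at $x$ (or $\bar x$) and avoid $y$, not by re-routing the already geodesic segment between $y$ (or $w$) and $\bar z$ (or $\bar w$). The remaining ingredients of your proposal --- the induction giving $Tz|_{Y_t}=T\bar z|_{Y_t}$ and $Tw|_{Y_t}=T\bar w|_{Y_t}$, the barycentric decomposition inside a subcoral with $a_v\in[0,1]$, and the identification in (iv) of the vertex $q\in V_t$, $q\ne y$, giving $T\bar w|_{Y_t}=\omega y_{t,\sigma_t(q)}$ --- agree with the paper's proof and are fine.
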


\begin{proof} Observe that the leftmost equalities in each of the
statements follow immediately from \eqref{E:DefTdeg1}, so we shall
focus only on the rightmost equalities.
\medskip

{\bf (i)} Let $\bar z$ be in the subcoral with diagonal $y\bar x$.
Observe that ends of edges of length $\le\lambda^{m_0+t-1}$ with
one end at $\bar{x}$ and the other end in the subcoral with the
diagonal $\bar{x}y$ cannot be in $P$ because then, by \eqref{m02},
the path through $\bar{x}$ would be shorter. Therefore
\begin{equation*}%\label{Snwp}
T\bar z=(1-b)T\bar{x} +bTy + \bar z_t,
\end{equation*}
where  $Ty \in B_{t}\DEF\Span\{x_j: j\le\sum_{k=0}^t L(N_k)
\}=\Span(\bigcup_{k=0}^t Y_k)$, $T\bar x\in B_{t-1}$,  $\bar
z_t\in T_t\DEF\Span\{x_j: j>\sum_{k=0}^t L(N_k)\}$, and $b\ge
\left(\frac12\right)^{m_0-1}$. Note that
\begin{equation}\label{Sny2}
Ty=\frac12(Tx+T\bar x)+\pi_{t,y},
\end{equation}
where  $Tx, T\bar x \in B_{t-1}$ and $\pi_{t,y}\in Y_t$. Hence
\begin{equation*}%\label{Snw}
T\bar z|_{Y_t}=b\pi_{t,y},
\end{equation*}
the conclusion follows.
\medskip

\noindent{\bf (ii)} If  $\bar z$ is in the subcoral with diagonal
$yx$, since $yx$ is a part of a shortest path, we conclude that
 the longest edge in the part of the path $P$ from $y$ to $\bar z$ has length
$\le\lambda^{t+m_0}$, where $m_0$ satisfies \eqref{m02}. Thus
\begin{equation}\label{Snz12}
T\bar z= aTy +(1-a)Tx + z_t,
\end{equation}
where $Ty \in B_t$, $Tx \in B_{t-1}$,  $z_t\in T_t$, and
\begin{equation*}%\label{heighta}
1\ge a \ge 1- \sum_{k=m_0}^\infty\left(\frac12\right)^{k}=
1-\left(\frac12\right)^{m_0-1}.
\end{equation*}

By \eqref{Sny2} we get
\[(Ty-T\bar z)|_{Y_t}=\pi_{t,y}-a\pi_{t,y}=(1-a)\pi_{t,y},\]
the conclusion follows.\medskip

\noindent{\bf (iii)} If $\bar w$ is in the subcoral for which $xy$
is the diagonal, similarly as in \eqref{Snz12}, we obtain
$$T\bar w= cTx +(1-c)Ty + w_t,$$
where $Tx\in B_{t-1}, Ty \in B_t$, $w_t\in T_t$, and $ 1\ge c\ge
1-\left(\frac12\right)^{m_0-1}$. By \eqref{Sny2},
\[T\bar w|_{Y_t}=(1-c)\pi_{t,y},\]
and we are done in this case.

\noindent{\bf (iv)} If $\bar w$ is not in the subcoral for which
$xy$ is the diagonal, let $q\in V_t$, $q\ne y$, be the vertex
which is an endpoint of an edge of length $\lambda^{t}$ which is a
diagonal of the subcoral that contains $\bar w$. By construction,
the projection of $T\bar w$ onto the subspace $Y_t$ is a multiple
of $y_{t,\sigma_t(q)}\ne y_{t,\sigma_t(y)}$, with some coefficient
$\omega\in[0,1]$.\end{proof}

\remove{ Thus, by \eqref{Snz2} and Lemma~\ref{L:apart},
\begin{equation*}%\label{big2}
\begin{split}
\|S_nz-S_nw\|&\ge \frac1{2C} \|(S_nw-S_nz)|_{Y_t}\|\\
 &= \frac1{2C} \left\|\g\e\left(\frac12+\e\right)^{t-1} y_{t,\s_t(y)}-\al y_{t,\s_t(q)}\right\|\\
&\ge
\frac1{2C}\frac{\de}2\max\left(\g\e\left(\frac12+\e\right)^{t-1},\al\right)
\\&\ge  \frac1{4C}\e\de\left(\frac12+\e\right)^{t-1}\g_1(\e),
\end{split}
\end{equation*}
which finishes the proof.}

Observe that Lemma \ref{L:Try2} implies the estimate for the
Lipschitz constant of $T^{-1}$, and thus Theorem
\ref{T:GenTwoWay}, in all of the cases except the case where both
{\bf (i)} and {\bf (iii)} hold. Consider, for example the case
where {\bf (i)} and {\bf (iv)} hold. Then (we use
\eqref{E:dAbove2}, \eqref{E:FunctBelow2}, the conclusions of {\bf
(i)} and {\bf (iv)}, the definition of $\pi_{t,y}$ and the
moreover statement in Lemma \ref{L:apart})

\[\begin{split}\frac{d_W(w,z)}{||S_nw-S_nz||}&\le\frac{2\lambda^t}{\left(1-\lambda\right)\frac1{2C}||\alpha\pi_{t,y}-\omega
y_{t,k}||}\\
&\le\frac{4C\lambda^t}{\left(1-\lambda\right)\frac{\delta}2\alpha\left(\lambda-\frac12\right)\lambda^{t-1}}=
\frac{8C\lambda}{\left(1-\lambda\right)\delta\left(\lambda-\frac12\right)\left(\frac12\right)^{m_0-1}},
\end{split}\]
and this number depends only on $C$ and $\lambda$.\medskip

 It remains to consider the  case where both {\bf (i)} and {\bf
(iii)} hold. In this case we estimate from below the norm of
$(T\bar w-T\bar z)|_{Y_{t-1}}$. We use $T\bar z=(1-b)T\bar{x} +bTy
+ \bar z_t$ and $T\bar w= cTx +(1-c)Ty + w_t$ with $1\ge b\ge
\left(\frac12\right)^{m_0-1}$ and $1\ge c \ge
1-\left(\frac12\right)^{m_0-1}$. The value of $b$ actually does
not matter for our argument, it is only important that $0\le b\le
1$. Recall also that $Ty=\frac12(Tx+T\bar{x})+\pi_{t,y}$.

Therefore
\begin{equation}\label{E:t-1Est2}
(T\bar z-T\bar
w)|_{Y_{t-1}}=\left.\left(\left(1-\frac12b-\frac{1-c}2\right)T\bar{x}-
\left(c+\frac{1-c}2-\frac12b\right)Tx\right)\right|_{Y_{t-1}}\end{equation}
Observe that each of the coefficients of $T\bar{x}$ and $Tx$ in
this sum is at least
\[\left(\frac12-\left(\frac12\right)^{m_0}\right).\] There are  two possible cases:

(A)  $x\in V_{t-1}$, $\bar x\in\cup_{i=0}^{t-2}V_i$;\medskip

(B) $\bar{x}\in V_{t-1}$, $x\in\cup_{i=0}^{t-2}V_i$.\medskip

The cases are similar, so we consider the case (A) only.
\medskip

{\bf Subcase (1):} There is $o\in\cup_{i=0}^{t-2}V_i$ such that
$x$ is in the subcoral with diagonal $o\bar{x}$, so $T\bar{x},
To\in B_{t-2}$, and
\[Tx=\frac12(T\bar{x}+T o)+\left(\lambda-\frac12\right)\lambda^{t-2}y_{t-1,\s_{t-1}(x)},\]
and
\[(T\bar z-T\bar w)|_{Y_{t-1}}=-\left(c+\frac{1-c}2-\frac12b\right)\left(\lambda-\frac12\right)\lambda^{t-2}y_{t-1,\s_{t-1}(x)}.\]
We get

\[\begin{split}\frac{d_W(w,z)}{||T\bar z-T\bar w||}&\le
\frac{2\lambda^t}{\left(1-\lambda\right)\frac1{2C}\left(c+\frac{1-c}2-\frac12b\right)\left(\lambda-\frac12\right)\lambda^{t-2}}\\
&\le\frac{4C\lambda^2}{\left(1-\lambda\right)\left(\lambda-\frac12\right)\left(\frac12-\left(\frac12\right)^{m_0}\right)}.
\end{split}\]
The obtained number depends only on $C$ and $\lambda$.

{\bf Subcase (2):} The vertex $x$ has degree $1$ in $G_{t-1}$ so
$T\bar{x}\in B_{t-2}$,
\[Tx=T\bar{x}+\lambda^{t-1}y_{t-1,\s_{t-1}(x)},\]
and
\[(T\bar z-T\bar w)|_{Y_{t-1}}=-\left(c+\frac{1-c}2-\frac12b\right)\lambda^{t-1}y_{t-1,\s_{t-1}(x)}.\]
We get

\[\begin{split}\frac{d_W(w,z)}{||T\bar z-T\bar w||}&\le
\frac{2\lambda^t}{\left(1-\lambda\right)\frac1{2C}\left(c+\frac{1-c}2-\frac12b\right)\lambda^{t-1}}\\
&\le\frac{4C\lambda}{\left(1-\lambda\right)\left(\frac12-\left(\frac12\right)^{m_0}\right)}.
\end{split}\]
The obtained number depends only on $C$ and $\lambda$. This
concludes the proof.
\end{proof}

\section*{Acknowledgement}

The authors would like to thank Gideon Schechtman for suggesting
the problem and sharing relevant information. Also we would like
to thank William B.~Johnson and Stanis\l aw Szarek for useful
information and the referee for suggesting improvements to our
paper. The first-named author gratefully acknowledges the support
by NSF DMS-1201269.

\end{large}

\renewcommand{\refname}{\section{References}}

\textsc{Department of
Mathematics and Computer Science, St. John's University, 8000 Utopia Parkway, Queens, NY 11439, USA} \par
  \textit{E-mail address}: \texttt{ostrovsm@stjohns.edu} \par

\textsc{Department of Mathematics, Miami University,
Oxford, OH 45056, USA} \par
  \textit{E-mail address}: \texttt{randrib@miamioh.edu} \par


\begin{thebibliography}{WWW12}

\bibitem[AB14]{AB14} F.~Albiac, F.~Baudier, Embeddability of snowflaked metrics with
applications to the nonlinear geometry of the spaces $L_p$ and
$\ell_p$ for $0<p<\infty$, {\it J. Geom. Anal.} {\bf 25} (2015),
no. 1, 1--24; DOI 10.1007/s12220-013-9390-0

\bibitem[AM83]{AM83} N.~Alon, V.\,D.~Milman, Embedding of $\ell_\infty^k$ in
finite-dimensional Banach spaces. {\it Israel J. Math.} {\bf 45}
(1983), no. 4, 265--280.

\bibitem[ABV98]{ABV98} J.~Arias-de-Reyna, K.~Ball, R.~Villa, Concentration of the distance in finite-dimensional normed
spaces.  {\it Mathematika} {\bf  45}  (1998),  no. 2, 245--252.

\bibitem[Ass83]{Ass83} P.~Assouad, Plongements lipschitziens dans $\mathbb{R}^n$.
{\it Bull. Soc. Math. France} {\bf 111} (1983), no. 4, 429--448.

\bibitem[Bar96]{Bar96}
Y.~Bartal, Probabilistic approximation of metric spaces and its
algorithmic applications,
  in {\em The\/}  37th {\em Annual Symposium on Foundations of
Computer Science}, pp.\  184--193, 1996,
  IEEE Comput.\ Sci.\ Press, Los Alamitos, CA, 1996.

\bibitem[Bar99]{Bar99} Y.~Bartal, On approximating arbitrary metrices by tree
metrics. {\it STOC '98} (Dallas, TX), 161--168, ACM, New York,
1999.

\bibitem[BLMN04]{BLMN04} Y.~Bartal, N.~Linial, M.~Mendel, A.~Naor, Low
dimensional embeddings of ultrametrics. {\it European J. Combin.}
{\bf 25} (2004), no. 1, 87--92.

\bibitem[BLMN05]{BLMN05} Y.~Bartal,
N.~Linial, M.~Mendel, A.~Naor, On metric Ramsey-type phenomena,
{\it Annals of Math.}, {\bf 162} (2005), 643--709.

\bibitem[BM04]{BM04} Y.~Bartal, M.~Mendel, Dimension reduction for ultrametrics.
{\it Proceedings of the Fifteenth Annual ACM-SIAM Symposium on
Discrete Algorithms}, 664--665, ACM, New York, 2004.

\bibitem[BB91]{BB91} J.~Bastero, J.~Bernu\'es,
Applications of deviation inequalities on finite metric sets. {\it
Math. Nachr.} {\bf 153} (1991), 33--41.

\bibitem[BBK89]{BBK89} J.~Bastero, J.~Bernu\'es, N.~Kalton, Embedding
$\ell^n_\infty$-cubes in finite-dimensional 1-subsymmetric spaces.
Congress on Functional Analysis (Madrid, 1988). {\it Rev. Mat.
Univ. Complut. Madrid} {\bf 2} (1989), suppl., 47--52.

\bibitem[BPS95]{BPS95} J.~Bastero, A.~Pe{\~{n}}a, G.~Schechtman, Embedding $\ell^n_\infty$-cubes in
low-dimensional Schatten classes.
 {\it Geometric aspects of functional analysis} (Israel, 1992--1994), 5--11, {\it Oper. Theory Adv. Appl.}, {\bf 77}, Birkh�user, Basel, 1995.

\bibitem[BFM86]{BFM86} J.~Bourgain, T.~Figiel, V.~Milman,  On Hilbertian subsets of
finite metric spaces. {\it Israel J. Math.} {\bf 55} (1986), no.
2, 147--152.

\bibitem[DS13]{DS13} G.~David, M.~Snipes,  A non-probabilistic proof of the
Assouad embedding theorem with bounds on the dimension. {\it Anal.
Geom. Metr. Spaces} {\bf 1} (2013), 36--41.

\bibitem[Dvo61]{Dvo61} A.~Dvoretzky, Some results on convex bodies and Banach spaces, in:
{\it Proc. Internat. Sympos. Linear Spaces} (Jerusalem, 1960),
pp.~123--160, Jerusalem Academic Press, Jerusalem; Pergamon,
Oxford, 1961; Announcement: A theorem on convex bodies and
applications to Banach spaces, {\it Proc. Nat. Acad. Sci. U.S.A.},
{\bf 45} (1959) 223--226; erratum, 1554.

\bibitem[FL94]{FL94}
Z. F{\"u}redi, P. A. Loeb,  On the best constant for the
Besicovitch covering theorem,  {\it Proc. Amer. Math. Soc.} {\bf
121} (1994), no. 4, 1063--1073.

\bibitem[Glu81]{Glu81} E.\,D.~Gluskin, The diameter of the Minkowski compactum is
roughly equal to $n$. (Russian) {\it Funktsional. Anal. i
Prilozhen.} {\bf 15} (1981), no. 1, 72--73.

\bibitem[GNRS04]{GNRS04} A.~Gupta, I.~Newman, Y.~Rabinovich,
A.~Sinclair, Cuts, trees and $\ell_1$-embeddings of graphs, {\it
Combinatorica}, {\bf 24} (2004) 233--269; Conference version in:
{\it 40th Annual IEEE Symposium on Foundations of Computer
Science}, 1999, pp.~399--408.

\bibitem[Hei01]{Hei01}
J.\,M.~Heinonen, {\it Lectures on Analysis on Metric Spaces},
Universitext, Springer-Verlag, New York, 2001.

\bibitem[Hei03]{Hei03}
J.\,M.~Heinonen, {\it Geometric embeddings of metric spaces}.
Report. University of Jyv\"{a}s\-kyl\"{a} Department of
Mathematics and Statistics, 90. University of Jyv\"{a}skyl\"{a},
Jyv\"{a}skyl\"{a}, 2003; Available at {\tt
http://www.math.jyu.fi/research/reports/rep90.pdf}.

\bibitem[Hug12]{Hug12} B.~Hughes,
Trees, ultrametrics, and noncommutative geometry. {\it Pure Appl.
Math. Q.} {\bf 8} (2012), no. 1, 221--312.

\bibitem[JS09]{JS09} W.\,B.~Johnson, G. Schechtman,
Diamond graphs and super-reflexivity, {\it J. Topol. Anal.}, {\bf
1} (2009), no. 2, 177--189.

\bibitem[LP01]{LP01} U.~Lang, C.~Plaut, Bilipschitz embeddings
of metric spaces into space forms. {\it Geom. Dedicata} {\bf 87}
(2001), no. 1--3, 285--307.

\bibitem[MT93]{MT93} P.~Mankiewicz, N.~Tomczak-Jaegermann,
Embedding subspaces of $\ell_\infty^n$ into spaces with Schauder
basis. {\it Proc. Amer. Math. Soc.} {\bf 117} (1993), no. 2,
459--465.

\bibitem[MN13]{MN13} M.~Mendel, A.~Naor, Ultrametric subsets with large
Hausdorff dimension. {\it Invent. Math.} {\bf 192} (2013), no. 1,
1--54.

\bibitem[Mil71]{Mil71} V.\,D.~Milman,
A new proof of A. Dvoretzky's theorem on cross-sections of convex
bodies. (Russian) {\it Funktsional. Anal. i Prilozhen.} {\bf 5}
(1971), no. 4, 28--37.

\bibitem[Mil85]{Mil85} V.\,D.~Milman, Almost Euclidean quotient spaces of subspaces of
a finite-dimensional normed space. {\it Proc. Amer. Math. Soc.}
{\bf 94} (1985), no. 3, 445--449.

\bibitem[NN12]{NN12} A.~Naor, O.~Neiman, Assouad's theorem
with dimension independent of the snowflaking. {\it Rev. Mat.
Iberoam.} {\bf 28} (2012), no. 4, 1123--1142.

\bibitem[Ost14]{Ost14} M.\,I.~Ostrovskii,
Metric characterizations of superreflexivity in terms of word
hyperbolic groups and finite graphs, {\it Anal. Geom. Metr.
Spaces} {\bf 2} (2014), 154--168.

\bibitem[Rud95]{Rud95} M.~Rudelson, Estimates of the weak distance
between finite-dimensional Banach spaces. {\it Israel J. Math.}
{\bf 89} (1995), no. 1-3, 189--204.

\bibitem[Sch84]{Sch84} W.\,H.~Schikhof, {\it Ultrametric calculus. An introduction to $p$-adic
analysis.} Cambridge Studies in Advanced Mathematics, {\bf 4}.
Cambridge University Press, Cambridge, 1984.

\bibitem[Sem99]{Sem99} S.~Semmes, Bilipschitz embeddings of metric spaces into
Euclidean spaces. {\it Publ. Mat.} {\bf 43} (1999), no. 2,
571--653.

\bibitem[Shk04]{Shk04} S.\,A.~Shkarin, Isometric embedding of finite
ultrametric spaces in Banach spaces. {\it Topology Appl.} {\bf
142} (2004), no. 1--3, 13--17.

\bibitem[Sza83]{Sza83} S.\,J.~Szarek, The finite-dimensional basis problem with
an appendix on nets of Grassmann manifolds. {\it Acta Math.} {\bf
151} (1983), no. 3--4, 153--179.

\bibitem[ST09]{ST09} S.\,J.~Szarek, N.~Tomczak-Jaegermann,
On the nontrivial projection problem. {\it Adv. Math.} {\bf 221}
(2009), no. 2, 331--342.

\bibitem[Tal95]{Tal95} M.~Talagrand, Embedding of $\ell_\infty^k$ and a theorem of Alon and
Milman. {\it Geometric aspects of functional analysis} (Israel,
1992--1994), 289--293, {\it Oper. Theory Adv. Appl.}, {\bf 77},
Birkh\"auser, Basel, 1995.
\end{thebibliography}
\end{document}